\newcommand{\R}{\mathbb{R}}
\newcommand{\N}{\mathbb{N}}
\newcommand{\V}{\mathbb{V}}
\newcommand{\W}{\mathbb{W}}
\newcommand{\XB}{\mathbb{X}}
\newcommand{\YB}{\mathbb{Y}}
\newcommand{\PP}{\mathbb{P}}
\newcommand{\HB}{\mathbb{H}}
\newcommand{\LB}{\mathbb{L}}
\theoremstyle{plain}
\newtheorem{theorem}{Theorem}[section]
\newtheorem{lemma}[theorem]{Lemma}
\newtheorem{proposition}[theorem]{Proposition}
\theoremstyle{definition}
\newtheorem{definition}[theorem]{Definition}
\theoremstyle{remark}
\numberwithin{equation}{section}
\newcommand{\RR}{\mathcal{R}}
\newcommand{\lnorm}[2]{\left\|#1\right\|_{\LB^{#2}(\Omega)}}
\newcommand{\hnorm}[2]{\left\|#1\right\|_{\HB^{#2}(\Omega)}}
\newcommand{\wnorm}[3]{\left\|#1\right\|_{\W^{#2,#3}(\Omega)}}
\newcommand{\liprod}[2]{\left\langle#1,#2\right\rangle}
\newcommand{\intOt}[1]{\int_0^t #1 \; {\rm d}s}
\renewcommand{\vec}[1]{\mbox{\boldmath $ #1 $}}
\newcommand{\ddt}{\frac{\rm{d}}{{\rm d}t}}
\let\div\undefined
\DeclareMathOperator{\div}{div}
\DeclareMathOperator{\curl}{curl}
\newcommand{\pmat}[1]{\begin{pmatrix}#1\end{pmatrix}}
\newcommand{\weakto}{\rightharpoonup}
\newcommand{\wkstarto}{\overset{\star}{\rightharpoonup}}
\newcommand{\lsim}{\lesssim}
\newcounter{Ax}
\newcommand{\comm}[1]{\textcolor{blue}{#1}}
\newcommand\tsout{\bgroup\markoverwith{\textcolor{red}{\rule[0.5ex]{2pt}{1.4pt}}}\ULon}
\newcommand{\stkout}[1]{\ifmmode\text{\tsout{\ensuremath{#1}}}\else\tsout{#1}\fi}
\title[Well-posedness for a Magnetohydrodynamical Model with Intrinsic Magnetisation]{Well-posedness for a Magnetohydrodynamical Model with Intrinsic Magnetisation}
\author{{Noah Vinod\orcidlink{0009-0008-4388-2329} and Thanh Tran\orcidlink{0000-0001-6117-4811}}}
\date{\today}
\address{School of Mathematics and Statistics, The University of New South Wales, Sydney 2052, Australia}
\email{\comm{n.vinod@unsw.edu.au}}
\address{School of Mathematics and Statistics, The University of New South Wales, Sydney 2052, Australia}
\email{\comm{thanh.tran@unsw.edu.au}}
\begin{document}

\begin{abstract}
Ferromagnetic magnetohydrodynamics concerns the study of conducting fluids with intrinsic magnetisation under the influence of a magnetic field. It is a generalisation of the magnetohydrodynamical equations and takes into account the dynamics of the magnetisation of a fluid. First proposed by Lingam (Lingam, `Dissipative effects in magnetohydrodynamical models with intrinsic magnetisation', Communications in Nonlinear Science and Numerical Simulation Vol 28, pp 223-231, 2015), the usual equations of magnetohydrodynamics, namely the Navier-Stokes equation and the induction equation, are coupled with the Landau-Lifshitz-Gilbert equation. In this paper, the local existence, uniqueness and regularity of weak solutions to this system are discussed.
\end{abstract}

\maketitle

\tableofcontents

\section{Introduction} \label{sec:introduction}


In this paper, we develop local-in-time existence and uniqueness theorems the ferromagnetic magnetohydrodynamical model represented by the following system of PDEs 
\begin{subequations} \label{eqn:fmhd}
    \begin{alignat}{2}
    &\partial_t \vec{v} + (\vec{v} \cdot \nabla) \vec{v} - \mu\Delta \vec{v} + \nabla p = \curl \vec{B} \times \vec{B} + (\vec{B} \cdot \nabla) \vec{m} + \nabla (\vec{m} \cdot \vec{B}) - \div \left[(\nabla \vec{m})^{\top} \nabla \vec{m}\right], \label{eqn:fmhd-equations a} \\
    &\div \vec{v} = 0, \label{eqn:fmhd-equations b} \\
    &\partial_t \vec{B} + \eta \curl^2 \vec{B} - \curl(\vec{v} \times \vec{B}) = \vec{0}, \label{eqn:fmhd-equations c} \\
    &\div \vec{B} = 0, \label{eqn:fmhd-equations d}\\
    &\partial_t \vec{m} + (\vec{v} \cdot \nabla) \vec{m} = \gamma \vec{m} \times (\Delta \vec{m} + \vec{B}) - \chi \vec{m} \times (\vec{m} \times (\Delta \vec{m} + \vec{B})). \label{eqn:fmhd-equations e}
\end{alignat}
\end{subequations}
over the problem domain $(0,T) \times \Omega$. Here $T > 0$ and the set $\Omega$ is a bounded domain in $\R^3$ with smooth boundary $\partial \Omega$. The vector fields $\vec{v}, \vec{B}, \vec{m} : [0,T] \times \Omega \to \R^3$ represent the velocity of the fluid, the magnetic field, and the magnetisation, respectively, while the scalar field $p : [0,T] \times \Omega \to \R$ represents the pressure of the fluid. The constants $\mu, \eta$ and $\chi$ are positive parameters related to the viscosity the fluid, the diffusivity of the magnetic field, and the damping of the magnetisation. The parameter $\gamma$ is a non-zero constant representing the electron's gyromagnetic ratio. We set $\mu = \eta = \gamma = \chi = 1$ in \eqref{eqn:fmhd}, for the sake of simplicity.  This system is subjected to the boundary conditions
\begin{subequations} \label{eqn:boundary-conditions}
    \begin{alignat}{3}
    \vec{v} &= \vec{0}, \label{eqn:boundary-conditions a}\\
    \vec{B} \cdot \vec{n} &= 0, \label{eqn:boundary-conditions b}\\
    \curl \vec{B} \times \vec{n} &= \vec{0}, \label{eqn:boundary-conditions c}\\
    \frac{\partial \vec{m}}{\partial \vec{n}} &= \vec{0}, \label{eqn:boundary-conditions d}
    \end{alignat}
\end{subequations}
on $[0,T] \times \partial\Omega$, along with the initial data
\begin{equation*}
    \vec{v}(0,\cdot) = \vec{v}_0, \qquad \vec{B}(0,\cdot) = \vec{B}_0, \qquad \vec{m}(0,\cdot) = \vec{m}_0.
\end{equation*}

In 1942, Hannes Alfvén studied the properties of waves created by fluids that conducted electricity under the influence of a magnetic field \cite{alfven1942-article}. This scenario necessitates that the moving liquid exposed to a magnetic field creates an electric current within it which then influences the motion of the fluid as it interacts with the magnetic field. This field of inquiry, designated `magnetohydrodynamics', had tremendous applications in plasma research, solar physics, geophysics and more \cite{sheikholeslami2016-book}. The dynamics for an incompressible magnetohydrodynamical fluid were modelled by the following system of PDEs, consisting of the Navier-Stokes equation and the induction equation, derived from Maxwell's equations \cite{gerbeau2006-book}:
\begin{equation} \label{eqn:mhd-model}
    \begin{alignedat}{2}
        &\partial_t \vec{v} + (\vec{v} \cdot \nabla) \vec{v} - \mu\Delta \vec{v} + \nabla p = \curl \vec{B} \times \vec{B}, &\qquad &\div \vec{v} = 0, \\
        &\partial_t \vec{B} + \eta \curl^2 \vec{B} - \curl(\vec{v} \times \vec{B}) = \vec{0}, &\qquad &\div \vec{B} = 0.
    \end{alignedat}
\end{equation}
The intrinsic non-linearities of this system of PDEs, and its numerous applications have drawn a number of mathematicians to it. The equations of magnetohydrodynamics have been studied by mathematicians for decades. They have asked questions related to the existence and uniqueness of solutions, including weak-strong uniqueness, and the regularity of those solutions. A non-exhaustive list of their efforts includes \cite{chen2011-article, gunzburger1991-article, hu2010-article, li2011-article, sart2009-article, sermange1983-article, yan2013-article}.

About a decade earlier, in 1935, Landau and Lifshitz studied the distribution of magnetic moments in a ferromagnetic crystal \cite{landau1935-article}, and based on phenomenological observations, produced the so-called Landau-Lifshitz (LL) equation
\begin{equation*}
    \partial_t \vec{m} = \vec{m} \times \vec{H}_{\mathrm{eff}} - \vec{m} \times (\vec{m} \times \vec{H}_{\mathrm{eff}}).
\end{equation*}
Here, $\vec{H}_{\mathrm{eff}}$ is the effective field, which consists of the terms representing the various energies that influence the dynamics of the magnetisation. Note that in \eqref{eqn:fmhd-equations e} we have $\vec{H}_{\mathrm{eff}} = \Delta \vec{m} + \vec{B}$, which corresponds to the exchange and Zeeman energies, respectively. Magnetisation fundamentally arises from quantum mechanical effects \cite{lakshmanan2011-article}, but the theory of micromagnetism attempts to capture those effects through a macroscopic lens by ignoring its atomic nature and looking at it through a more classical lens \cite{kruzik2006-article}. The purpose of this theory is to calculate the magnetisation of a material under the influence of varying effects such as differing fields, currents, inherent structures of materials, the interactions between magnetisations between different parts of the material and so on \cite{exl2020-book}. The LL equation is becoming increasingly relevant in industry as it assists in the development of electronic devices based on magnetic nano-wires, vortexes in nano-elements for sensors, magnetic recording, and so on \cite{cimrak2007-article}. The LL equations thus demonstrate a number of non-linear structures and has interesting geometric properties. This, in general, makes the equation highly complex and non-integrable \cite{lakshmanan2011-article}. For these reasons, mathematicians have been drawn to it and have studied it very closely over the last couple of decades. The first string of results were theoretical studies by mathematicians on the existence, uniqueness (or non-uniqueness) and regularity of solutions to the equations. A non-exhaustive list includes \cite{alouges1992-article, carbou2001-article, chen2000-article, chen1998-article, harpes2004-article, melcher2005-article, visintin1985-article, zhou1981-article}.


More recently, in 2015, Lingam \cite{lingam2015-article} studied the motion of conducting classical and quantum fluids that have spin, that is, they have an intrinsic magnetisation much like permanent magnets. Hence, in addition to being a magnetohydrodynamical model \eqref{eqn:mhd-model}, Lingam accounted for the dynamics of this intrinsic magnetisation. This model has potential applications in astrophysical and fusion plasmas, with some links to liquid crystals \cite{lingam2015-article}. Now, the resultant model consists of the Navier-Stokes equation for fluid flow, Maxwell's equation for magnetic fields and the Landau-Lifshitz equation for magnetisation and resembles the model \eqref{eqn:fmhd}. However, Lingam develops the model in the compressible case, whereas we look at the incompressible version, and we also leave out the entropy-density equation as it is decoupled from the others. Moreover, we differ from Lingam in that we also conserve the exchange energy of the magnetisation, represented by $|\nabla \vec{m}|^2$ in the Hamiltonian, the term $\div \left[(\nabla \vec{m})^{\top} (\nabla \vec{m})\right]$ in \eqref{eqn:fmhd-equations a}, and $\Delta \vec{m}$ in \eqref{eqn:fmhd-equations e}. The conservation of the exchange energy is necessary because it features prominently in other studies of the LL equations (see the references above). The resultant model \eqref{eqn:fmhd} thus consists of the Navier-Stokes equation for fluid flow, Maxwell's equation for magnetic fields and the Landau-Lifshitz equation for magnetisation.

Equations \eqref{eqn:fmhd} are similar to the ferrohydrodynamic model by Rinaldi and Zahn \cite{rinaldi2002-article} and Rosensweig \cite{rosensweig2013-book}, in that they both contain similar fluid and magnetisation dynamics. However, they differ in that \eqref{eqn:fmhd} deals with electrically conducting fluids and does not conserve angular momentum. Equations \eqref{eqn:fmhd} are also similar to the magnetoviscoelastic model \cite{benesova2018-article, forster2016-thesis} which looks at the dynamics of magnetorheological fluids \cite{rabinow1948-article}. This latter model is composed of the Navier-Stokes equation as well as the Landau-Lifshitz equation, but differs with the inclusion of the elastic deformation equation.

A mathematical inquisition into the existence, uniqueness and regularity of solutions to \eqref{eqn:fmhd} has not been performed. Here, we undertake the onus of an analytical study of these coupled equations.

The paper is organised as follows. In Section \ref{sec:prelim} we lay out any notation and useful results that we use in this paper along with the statements of the main results. We carry out the construction of the Galerkin solution, using the Faedo-Galerkin method, in Section \ref{sec:faedo-galerkin}. The proof of the existence (Theorem \ref{thm:existence}) and stability (Theorem \ref{thm:stability}) of solutions are laid out in Sections \ref{sec:existence} and \ref{sec:stability}, respectively.

\section{Preliminaries and Main Results} \label{sec:prelim}



We first define the notation used in this paper. The set $\Omega \subset \R^3$ is a domain with a smooth boundary. The space $\LB^p(\Omega)$ denotes the space of $p$-integrable functions on $\Omega$ taking values in $\R^n$, where the value of $n$ is determined from context; throughout this paper, we use $\langle \cdot, \cdot\rangle$ to denote the $\LB^2(\Omega)$ inner product. Moreover, $\W^{m,p}(\Omega)$ denotes the usual Sobolev space of vector-valued functions, and we define $\HB^m(\Omega) := \W^{m,2}(\Omega)$. Furthermore, we define
\begin{align*}
    \HB_n^1(\Omega) &:= \{\vec{u} \in \HB^1(\Omega) : \vec{u} \cdot \vec{n} = 0 \text{ a.e. on } \partial\Omega\}, \\
    \HB_{\div}^1(\Omega) &:= \{\vec{u} \in \HB^1(\Omega) : \div \vec{u} = 0 \text{ a.e. on } \Omega\}, \\
    \HB_{0, \div}^1(\Omega) &:= \{\vec{u} \in \HB_0^1(\Omega) : \div \vec{u} = 0 \text{ a.e. on } \Omega\}, \\
    \HB_{n, \div}^1(\Omega) &:= \HB_n^1(\Omega) \cap \HB_{\div}^1(\Omega).
\end{align*}
Lastly, the spaces $L^p(0,T; X)$ and $W^{m,p}(0,T; X)$ denote the usual Bochner spaces of functions on $(0,T)$ taking values in a normed vector space $X$. Here we use the notation $\partial_t$ to refer to the time-derivative and $\partial_i$ as shorthand for $\partial_{x_i}$ to refer to the spatial derivatives; we also use $\partial_{\vec{n}}$ as shorthand for the directional derivative (see below). Moreover, for any vector-valued function $\vec{u} : \Omega \to \R^3$ we define
\begin{alignat*}{3}
    &\nabla \vec{u} : \Omega \to \R^{3 \times 3} & & \qquad \text{by} \qquad  \nabla \vec{u} := \pmat{\partial_1 \vec{u} & \partial_2 \vec{u} & \partial_3 \vec{u}} = \pmat{\partial_1 u_1 & \partial_2 u_1 & \partial_3 u_1 \\ \partial_1 u_2 & \partial_2 u_2 & \partial_3 u_2 \\ \partial_1 u_3 & \partial_2 u_3 & \partial_3 u_3}, \\[2ex]
    &\Delta \vec{u} : \Omega \to \R^3 & &\qquad \text{by} \qquad \Delta \vec{u} := \pmat{\Delta u_1 & \Delta u_2 & \Delta u_3}^{\top}, \\[2ex]
    &\displaystyle \frac{\partial \vec{u}}{\partial \vec{n}} : \partial\Omega \to \R^3 & &\qquad \text{by} \qquad \displaystyle\frac{\partial \vec{u}}{\partial \vec{n}} := (\nabla \vec{u}|_{\partial\Omega}) \vec{n}, \\[2ex]
    &\nabla^2 \vec{u} : \Omega \to \R^{3 \times 3 \times 3} & & \qquad \text{by} \qquad \nabla^2 \vec{u} := \nabla(\nabla \vec{u}),        
\end{alignat*}
where the latter denotes the tensor of second-order derivatives of $\vec{u}$ and not the Laplacian (which we denote by $\Delta$). Throughout this paper we use the constant $C$ to denote an arbitrary positive constant and further use the inequality notation $a \lsim b$ to imply that $a \leq C b$ for some $C$.

We now introduce the Leray projection $\PP : \LB^2(\Omega) \to \HB_{n,\div}^1(\Omega)$. The projection $\PP$ is defined using the Helmholtz-Leray decomposition of the space $\LB^2(\Omega)$. That is, for any $\vec{w} \in \LB^2(\Omega)$, we can decompose $\vec{w}$ such that
\begin{equation*}
    \vec{w} = \nabla p + \vec{u},
\end{equation*}
where $\vec{u} \in \HB_{n,\div}^1(\Omega)$ and $p \in \HB^1(\Omega)$. Here, $p$ solves the elliptic problem
\begin{align*}
    \Delta p &= \div \vec{w} \qquad \text{on } \Omega, \\
    \frac{\partial p}{\partial \vec{n}} &= \vec{w} \cdot \vec{n} \qquad \text{on } \partial\Omega.
\end{align*}
This decomposition is well-defined and unique (up to an additive constant for $p$). The Leray projection $\PP$ and the Stokes operator $A$ are defined by (see \cite[Section II.3]{foias2001-book} for more details)
\begin{equation} \label{eqn:stokes-operator}
\PP \vec{w} = \vec{u} \quad\forall\vec{w}\in\LB^2(\Omega)
\quad\text{and}\quad
A \vec{w} := -\PP \Delta \vec{w}
\quad\forall\vec{w}\in\HB^2(\Omega).
\end{equation}

The following identities are stated for the convenience of the reader. For $\vec{u}, \vec{v} \in \HB^1(\Omega)$, we have (see, e.g., \cite[Appendix B]{monk2003-book})
\begin{align}
    \curl(\vec{u} \times \vec{v}) &= \vec{u} (\div \vec{v}) - \vec{v}(\div \vec{u}) + (\vec{v} \cdot \nabla) \vec{u} - (\vec{u} \cdot \nabla) \vec{v}. \label{eqn:curl-cross-product}
\end{align}
and for $\vec{u} \in \HB^2(\Omega)$
\begin{equation} \label{eqn:curl-squared}
    \curl^2 \vec{u} = \nabla (\div \vec{u}) - \Delta \vec{u}.
\end{equation}
Moreover, we recall the integration by parts formula for $\curl$ \cite[Theorem 3.29]{monk2003-book}, namely that for any $\vec{u}, \vec{v} \in \HB^1(\Omega)$
\begin{equation} \label{eqn:curl-integration-by-parts}
    \liprod{\curl \vec{u}}{\vec{v}} = \liprod{\vec{u}}{\curl \vec{v}} + \langle \vec{u} \times \vec{v}, \vec{n} \rangle_{\LB^2(\partial\Omega)}
\end{equation}

\newcommand{\lnormD}[2]{\left\|#1\right\|_{\LB^{#2}(D)}}
\newcommand{\hnormD}[2]{\left\|#1\right\|_{\HB^{#2}(D)}}
\newcommand{\wnormD}[3]{\left\|#1\right\|_{\W^{#2,#3}(D)}}

We now state a few results that are crucial to our proof. The first two lemmas aid us in establishing the regularity of the magnetic field $\vec{B}$ and the magnetisation $\vec{m}$.

\begin{lemma} \label{lem:curl-estimate}
Let $D \subset \R^d$, $d = 2,3$, be a bounded domain with smooth boundary. If $\vec{u} \in \HB^1_{n,\div}(D)$ then
\begin{align*}
    \lnormD{\nabla \vec{u}}{2} &\lsim \lnormD{\curl \vec{u}}{2}
\end{align*}
and additionally if $\vec{u} \in \HB^2(D)$ and $\curl \vec{u} \times \vec{n} = \vec{0}$ on $\partial D$ we obtain
\begin{align*}
    \lnormD{\nabla^2 \vec{u}}{2} & \lsim \lnormD{\curl^2 \vec{u}}{2}.
\end{align*}
\end{lemma}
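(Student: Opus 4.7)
The plan is to derive both bounds from integration by parts combined with the identity \eqref{eqn:curl-squared}, exploiting the boundary conditions packaged into each hypothesis. The first bound is a Gaffney-type inequality controlling a full gradient by the curl alone; the second reduces to elliptic $\HB^2$-regularity for the Hodge Laplacian, together with the first bound.

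For the first estimate, with $\vec{u}\in\HB^1_{n,\div}(D)$, componentwise integration by parts followed by substitution of $-\Delta\vec{u} = \curl^2\vec{u} - \nabla(\div\vec{u})$ and two further integrations by parts yield an identity of the schematic form
\[
\|\nabla\vec{u}\|_{\LB^2(D)}^2 = \|\curl\vec{u}\|_{\LB^2(D)}^2 + \|\div\vec{u}\|_{\LB^2(D)}^2 + \text{(boundary terms)}.
\]
For vector fields tangent to $\partial D$, the boundary term reduces to a quadratic form in $\vec{u}$ with coefficients drawn from the second fundamental form of $\partial D$, hence bounded since $\partial D$ is smooth. Applying the trace theorem, interpolation, and Young's inequality lets us control the boundary contribution by $\tfrac{1}{2}\|\nabla\vec{u}\|_{\LB^2(D)}^2 + C\|\vec{u}\|_{\LB^2(D)}^2$; after absorbing and using $\div\vec{u}=0$, we obtain $\|\nabla\vec{u}\|_{\LB^2(D)}^2 \lsim \|\curl\vec{u}\|_{\LB^2(D)}^2 + \|\vec{u}\|_{\LB^2(D)}^2$. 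A compactness/contradiction argument then removes the lower-order term: a normalised sequence violating the inequality would, by Rellich, converge to a nonzero element of $\HB^1_{n,\div}(D)$ with vanishing curl -- a harmonic Neumann vector field, which is trivial for smooth simply-connected domains of the kind considered here.

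For the second estimate, since $\vec{u}\in\HB^1_{n,\div}(D)$ forces $\div\vec{u}=0$, \eqref{eqn:curl-squared} gives $\curl^2\vec{u}=-\Delta\vec{u}$. The pair $\vec{u}\cdot\vec{n}=0$ and $\curl\vec{u}\times\vec{n}=\vec{0}$ is a complementing boundary system for $-\Delta$ on a smooth domain, so standard elliptic $\HB^2$-regularity yields
\[
\|\vec{u}\|_{\HB^2(D)} \lsim \|\Delta\vec{u}\|_{\LB^2(D)} + \|\vec{u}\|_{\LB^2(D)} = \|\curl^2\vec{u}\|_{\LB^2(D)} + \|\vec{u}\|_{\LB^2(D)}.
\]
To absorb the last term, I would integrate by parts via \eqref{eqn:curl-integration-by-parts} to get $\|\curl\vec{u}\|_{\LB^2(D)}^2 = \liprod{\vec{u}}{\curl^2\vec{u}} + \int_{\partial D}(\vec{u}\times\curl\vec{u})\cdot\vec{n}\,d\sigma$, and note that $(\vec{u}\times\curl\vec{u})\cdot\vec{n}=\vec{u}\cdot(\curl\vec{u}\times\vec{n})=0$ by the curl boundary condition. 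Cauchy--Schwarz then gives $\|\curl\vec{u}\|_{\LB^2(D)}^2 \le \|\vec{u}\|_{\LB^2(D)}\|\curl^2\vec{u}\|_{\LB^2(D)}$; combining with the Poincaré inequality $\|\vec{u}\|_{\LB^2(D)}\lsim\|\curl\vec{u}\|_{\LB^2(D)}$ obtained as a byproduct of Part (i) yields $\|\vec{u}\|_{\LB^2(D)}\lsim\|\curl^2\vec{u}\|_{\LB^2(D)}$, closing the estimate.

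The main subtlety is the Poincaré-type step in Part (i): removing the $\|\vec{u}\|_{\LB^2(D)}$ term depends on the triviality of harmonic Neumann vector fields on $D$, which is classical for smooth simply-connected domains but requires care otherwise. Everything else is a routine chain of integration-by-parts identities once the boundary terms are identified.
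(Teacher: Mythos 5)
Your plan is correct, but it proves the lemma by a different, more self-contained route than the paper. The paper disposes of the first bound by citing Temam's div-curl estimate (their inequality \eqref{eqn:temam-curl-estimate} with $m=1$), and of the second by citing the Sermange--Temam $\HB^2$-regularity estimate \eqref{eqn:curl-curl-regularity} for the boundary value problem $-\Delta \vec{u}=\vec{f}$, $\vec{u}\cdot\vec{n}=0$, $\curl\vec{u}\times\vec{n}=\vec{0}$, combined with $-\Delta\vec{u}=\curl^2\vec{u}$ for divergence-free fields. You instead derive the first bound from the Gaffney identity (gradient $=$ curl $+$ div $+$ second-fundamental-form boundary term) plus a compactness argument, and the second from ADN elliptic regularity with the lower-order term removed via $\left\|\curl\vec{u}\right\|_{\LB^2(D)}^2=\langle\vec{u},\curl^2\vec{u}\rangle$ (the boundary term vanishing since $(\vec{u}\times\curl\vec{u})\cdot\vec{n}=\vec{u}\cdot(\curl\vec{u}\times\vec{n})=0$) together with the $\LB^2$ control inherited from part (i). What each approach buys: the paper's proof is two lines but leans entirely on the cited estimates, and as quoted Temam's inequality still carries $\left\|\vec{u}\right\|_{\LB^2(D)}$ on the right, so dropping it --- which the stated inequality requires --- is exactly the step you make explicit; and, as you correctly flag, this is where the topology of $D$ enters: on a non-simply-connected domain (e.g.\ a solid torus) there exist nontrivial fields with $\curl\vec{u}=\vec{0}$, $\div\vec{u}=0$, $\vec{u}\cdot\vec{n}=0$, and the first inequality fails, so both your compactness step and the paper's citation implicitly assume such harmonic fields are absent. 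Two minor technical points to tidy up in a full write-up: justify the Gaffney identity for $\vec{u}\in\HB^1_{n,\div}(D)$ by first proving it for $\HB^2$ fields tangent to the boundary and passing to the limit by density; and note that your absorption argument in part (ii) works because the compactness step in part (i) actually yields the full bound $\left\|\vec{u}\right\|_{\HB^1(D)}\lsim\left\|\curl\vec{u}\right\|_{\LB^2(D)}$, not merely the gradient bound, which is what gives you $\left\|\vec{u}\right\|_{\LB^2(D)}\lsim\left\|\curl\vec{u}\right\|_{\LB^2(D)}$.
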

\begin{proof}
From  \cite[Appendix I Proposition 1.4]{temam2001-book} we have, for $m \geq 1$,
\begin{equation} \label{eqn:temam-curl-estimate}
    \hnormD{\vec{u}}{m} \lsim \lnormD{\vec{u}}{2} + \hnormD{\curl \vec{u}}{m-1} + \|{\div \vec{u}}\|_{H^{m-1}(D)} + \|\vec{u} \cdot \vec{n}\|_{H^{m-1/2}(\partial D)}.
\end{equation}
The first inequality follows by setting $m = 1$ and using our assumptions. To prove the second inequality, we first note that from \cite[(2.7)-(2.8)]{sermange1983-article} the problem
\begin{align*}
    -\Delta \vec{u} &= \vec{f}, \qquad \text{on } D, \\
    \vec{u} \cdot \vec{n} &= 0, \qquad \text{on } \partial D, \\
    \curl \vec{u} \times \vec{n} &= \vec{0}, \qquad \text{on } \partial D
\end{align*}
has the regularity
\begin{equation} \label{eqn:curl-curl-regularity}
    \hnormD{\vec{u}}{s+2} \lsim \hnormD{\vec{f}}{s}.
\end{equation}
But $\div \vec{u} = 0$ and \eqref{eqn:curl-squared} imply that
\begin{equation*}
    -\Delta \vec{u} = \curl^2 \vec{u},
\end{equation*}
and thus the result follows from setting $s=0$ in \eqref{eqn:curl-curl-regularity}.
\end{proof}

We borrow the following two lemmas from \cite{carbou2001-article} and \cite{chen2011-article}, respectively.

\begin{lemma} \label{lem:carbou}
Let $D \subset \R^d$, $d = 1,2,3$, be a bounded domain with smooth boundary. Then for all $\vec{u} \in \HB^2(D)$ such that $\displaystyle \frac{\partial \vec{u}}{\partial \vec{n}} = \vec{0}$ on $\partial D$,
\begin{align} \label{eqn:lem-carbou-eqn-1}
    \hnormD{\vec{u}}{2} &\lsim \left(\lnormD{\vec{u}}{2}^2 + \lnormD{\Delta \vec{u}}{2}^2\right)^{1/2},
\end{align}
and for $\vec{u} \in \HB^3(D)$ such that $\displaystyle \frac{\partial \vec{u}}{\partial \vec{n}} = \vec{0}$ on $\partial D$,
\begin{align}
    \hnormD{\nabla \vec{u}}{2} &\lsim \left(\lnormD{\nabla \vec{u}}{2}^2 + \lnormD{\Delta \vec{u}}{2}^2 + \lnormD{\nabla \Delta \vec{u}}{2}^2\right)^{1/2}, \label{eqn:lem-carbou-eqn-2} \\
    \lnormD{\nabla^2 \vec{u}}{3} &\lsim \hnormD{\vec{u}}{2} + \hnormD{\vec{u}}{2}^{1/2} \lnormD{\nabla \Delta \vec{u}}{2}^{1/2}. \label{eqn:lem-carbou-eqn-3}
\end{align}
\end{lemma}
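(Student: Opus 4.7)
The three inequalities all follow from elliptic regularity for the Neumann Laplacian, augmented in one case by a Poincar\'e--Wirtinger argument and in another by a Gagliardo--Nirenberg interpolation. I would prove them in order. For \eqref{eqn:lem-carbou-eqn-1}, I would apply scalar $H^2$ elliptic regularity componentwise: the hypothesis $\partial_{\vec{n}} \vec{u} = \vec{0}$ on $\partial D$ is equivalent to $\partial_{\vec{n}} u_i = 0$ for each Cartesian component $u_i$, and since $\Delta u_i \in \LB^2(D)$, the standard Neumann $H^2$-regularity bound yields $\hnormD{u_i}{2} \lsim \lnormD{u_i}{2} + \lnormD{\Delta u_i}{2}$. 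Squaring and summing over $i$ then produces \eqref{eqn:lem-carbou-eqn-1}.

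For \eqref{eqn:lem-carbou-eqn-2}, the key observation is that $\hnormD{\nabla \vec{u}}{2}$ is invariant under adding a constant vector to $\vec{u}$. I would therefore set $\vec{w} := \vec{u} - \vec{c}$, where $\vec{c}$ denotes the mean of $\vec{u}$ over $D$. Then $\vec{w}$ has vanishing mean, satisfies $\partial_{\vec{n}} \vec{w} = \vec{0}$, and $\Delta \vec{w} = \Delta \vec{u}$; by Poincar\'e--Wirtinger, $\lnormD{\vec{w}}{2} \lsim \lnormD{\nabla \vec{u}}{2}$. Applying $H^3$ Neumann elliptic regularity componentwise gives
\[
  \hnormD{\vec{w}}{3} \lsim \lnormD{\vec{w}}{2} + \lnormD{\Delta \vec{u}}{2} + \lnormD{\nabla \Delta \vec{u}}{2},
\]
and since $\hnormD{\nabla \vec{u}}{2} \leq \hnormD{\vec{w}}{3}$, inequality \eqref{eqn:lem-carbou-eqn-2} follows after substituting the Poincar\'e bound.

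For \eqref{eqn:lem-carbou-eqn-3}, I would apply the Gagliardo--Nirenberg interpolation $\lnormD{f}{3} \lsim \lnormD{f}{2}^{1/2} \hnormD{f}{1}^{1/2}$ componentwise to $\nabla^2 \vec{u}$; this interpolation is valid for $d \leq 3$ via the Sobolev embedding $\HB^1(D) \hookrightarrow \LB^6(D)$ and H\"older interpolation between $\LB^2$ and $\LB^6$. The output is $\lnormD{\nabla^2 \vec{u}}{3} \lsim \hnormD{\vec{u}}{2}^{1/2} \hnormD{\vec{u}}{3}^{1/2}$. Substituting $\hnormD{\vec{u}}{3} \lsim \hnormD{\vec{u}}{2} + \lnormD{\nabla \Delta \vec{u}}{2}$, which follows from \eqref{eqn:lem-carbou-eqn-2}, and invoking $(a+b)^{1/2} \leq a^{1/2} + b^{1/2}$ produces \eqref{eqn:lem-carbou-eqn-3}.

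The principal obstacle is the refinement from \eqref{eqn:lem-carbou-eqn-1} to \eqref{eqn:lem-carbou-eqn-2}: naive $H^3$ Neumann regularity applied directly to $\vec{u}$ bounds $\hnormD{\vec{u}}{3}$ only by $\lnormD{\vec{u}}{2} + \hnormD{\Delta \vec{u}}{1}$, and the $\lnormD{\vec{u}}{2}$ term cannot simply be replaced by $\lnormD{\nabla \vec{u}}{2}$ since constants lie in the kernel of the Neumann Laplacian. The resolution is that the left-hand side of \eqref{eqn:lem-carbou-eqn-2} depends only on $\nabla \vec{u}$ and higher derivatives, so additive constants are annihilated; the Poincar\'e--Wirtinger trick on $\vec{w} = \vec{u} - \vec{c}$ then converts the obstructing $\lnormD{\vec{u}}{2}$ into the desired $\lnormD{\nabla \vec{u}}{2}$.
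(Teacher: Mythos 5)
Your proposal is correct, and for \eqref{eqn:lem-carbou-eqn-1} and \eqref{eqn:lem-carbou-eqn-3} it coincides with the paper's argument: the first inequality is scalar Neumann $H^2$ regularity (the paper phrases it via the resolvent problem $-\Delta \vec{u}+\vec{u}=\vec{g}$), and the third is exactly the Gagliardo--Nirenberg interpolation $\wnormD{\vec{u}}{2}{3} \lsim \hnormD{\vec{u}}{2}^{1/2}\hnormD{\vec{u}}{3}^{1/2}$ combined with \eqref{eqn:lem-carbou-eqn-2}. Where you genuinely diverge is \eqref{eqn:lem-carbou-eqn-2}: the paper does not use $H^3$ Neumann regularity at all, but instead applies the div--curl estimate \eqref{eqn:temam-curl-estimate} (Temam, Appendix I, Proposition 1.4) with $m=2$ to $\vec{v}=\nabla\vec{u}$, componentwise: since $\curl(\nabla u_i)=0$, $\div(\nabla u_i)=\Delta u_i$ and $\nabla u_i\cdot\vec{n}=\partial_{\vec{n}}u_i=0$ on $\partial D$, the estimate delivers $\hnormD{\nabla u_i}{2}\lsim \lnormD{\nabla u_i}{2}+\hnormD{\Delta u_i}{1}$ directly, with no $\lnormD{\vec{u}}{2}$ term ever appearing, so no mean subtraction is needed. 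Your route --- subtract the mean, use Poincar\'e--Wirtinger to replace $\lnormD{\vec{w}}{2}$ by $\lnormD{\nabla\vec{u}}{2}$, and then invoke scalar $H^3$ Neumann regularity for $-\Delta+I$ --- is equally valid on a smooth domain, and your diagnosis of why naive $H^3$ regularity fails (constants in the Neumann kernel) is exactly the issue the paper's choice of estimate sidesteps. What each buys: your argument is more elementary and self-contained, relying only on standard scalar Neumann elliptic theory; the paper's reuses the same Temam inequality already deployed in Lemma \ref{lem:curl-estimate}, which keeps the toolkit uniform and makes the proof a one-liner.
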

\begin{proof}
Inequality \eqref{eqn:lem-carbou-eqn-1} results from the regularity of solutions of $-\Delta \vec{u} + \vec{u} = \vec{g}$ (for some $\vec{g} \in \LB^2(D)$) subject to the homogeneous Neumann boundary condition. This boundary condition also enables us to prove Inequality \eqref{eqn:lem-carbou-eqn-2} using Proposition~1.4 of Appendix~I in \cite{temam2001-book} with $\vec{v} = \nabla \vec{u}$. The last inequality \eqref{eqn:lem-carbou-eqn-3} issues from the Gagliardo-Nirenberg inequality
\begin{equation*}
    \wnormD{\vec{u}}{2}{3} \lsim \hnormD{\vec{u}}{2}^{1/2} \hnormD{\vec{u}}{3}^{1/2}
\end{equation*}
and \eqref{eqn:lem-carbou-eqn-2}.
\end{proof}

The next two lemmas establish estimates for certain terms that arise in our subsequent analysis.

\begin{lemma} \label{lem:chen}
For $\vec{f} \in \HB^1(\Omega)$ and $\vec{g} \in \HB^2(\Omega)$
\begin{equation*}
    \lnorm{|\vec{f}| |\nabla \vec{g}|}{2} \leq \hnorm{\vec{f}}{1} \hnorm{\vec{g}}{1}^{1/2} \hnorm{\vec{g}}{2}^{1/2}.
\end{equation*}
\end{lemma}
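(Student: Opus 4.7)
The plan is to reduce the estimate to a standard Hölder-plus-Sobolev computation in three dimensions. The key observation is that $\frac{1}{2}=\frac{1}{6}+\frac{1}{3}$, and both the exponent $6$ and the exponent $3$ are accessible through Sobolev embedding in $\R^3$: we have $\HB^1(\Omega)\hookrightarrow \LB^6(\Omega)$, and by the Gagliardo--Nirenberg interpolation inequality
\[
\|\vec{w}\|_{\LB^3(\Omega)}\lsim \|\vec{w}\|_{\LB^2(\Omega)}^{1/2}\|\vec{w}\|_{\HB^1(\Omega)}^{1/2}\qquad\text{for }\vec{w}\in\HB^1(\Omega).
\]

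First I would apply Hölder's inequality with the split $1/2=1/6+1/3$ to obtain
\[
\lnorm{|\vec{f}||\nabla\vec{g}|}{2}\leq \lnorm{\vec{f}}{6}\,\lnorm{\nabla\vec{g}}{3}.
\]
Then I would bound $\lnorm{\vec{f}}{6}\lsim \hnorm{\vec{f}}{1}$ by the Sobolev embedding $\HB^1(\Omega)\hookrightarrow\LB^6(\Omega)$, valid since $\Omega\subset\R^3$ is bounded with smooth boundary. Next, applying the Gagliardo--Nirenberg inequality to the vector field $\nabla\vec{g}\in\HB^1(\Omega)$ gives
\[
\lnorm{\nabla\vec{g}}{3}\lsim \lnorm{\nabla\vec{g}}{2}^{1/2}\,\hnorm{\nabla\vec{g}}{1}^{1/2}\lsim \hnorm{\vec{g}}{1}^{1/2}\hnorm{\vec{g}}{2}^{1/2}.
\]
Combining the two displays yields the claimed estimate.

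There is no real obstacle here; everything is essentially bookkeeping, and the only point that needs slight care is making sure the Gagliardo--Nirenberg interpolation is invoked in the form that mixes $\LB^2$ with $\HB^1$ (rather than with a pure seminorm), so that the resulting right-hand side is genuinely $\hnorm{\vec{g}}{1}^{1/2}\hnorm{\vec{g}}{2}^{1/2}$ as written. Since $\Omega$ has smooth boundary the extension-based form of Gagliardo--Nirenberg applies directly to vector fields in $\HB^2(\Omega)$, so no additional argument is required. The constant hidden in $\lsim$ depends only on $\Omega$.
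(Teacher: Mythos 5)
Your proposal is correct and follows exactly the same route as the paper's proof: Hölder with the split $\tfrac12=\tfrac16+\tfrac13$, the embedding $\HB^1(\Omega)\hookrightarrow\LB^6(\Omega)$ for $\vec{f}$, and Gagliardo--Nirenberg applied to $\nabla\vec{g}$ to interpolate $\LB^3$ between $\LB^2$ and $\HB^1$. Nothing further is needed (noting, as the paper implicitly does, that the stated inequality holds up to a constant depending only on $\Omega$).
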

\begin{proof}
It follows from Hölder's inequality, the Sobolev embedding $\HB^1(\Omega) \hookrightarrow \LB^6(\Omega)$, and the Gagliardo-Nirenberg inequality that
\begin{align*}
    \lnorm{|\vec{f}| |\nabla \vec{g}|}{2} &\leq \lnorm{\vec{f}}{6} \lnorm{\nabla \vec{g}}{3} \\
    &\lsim \hnorm{\vec{f}}{1} \lnorm{\nabla \vec{g}}{2}^{1/2} \hnorm{\nabla \vec{g}}{1}^{1/2} \\
    &\lsim \hnorm{\vec{f}}{1} \hnorm{\vec{g}}{1}^{1/2} \hnorm{\vec{g}}{2}^{1/2},
\end{align*}
proving our result.
\end{proof}

\begin{lemma} \label{lem:curl-vxb}
For $\vec{u}, \vec{v}\in \HB_{\div}^1(\Omega)$ and $\vec{w} \in \LB^2(\Omega)$
\begin{equation*}
    \left|\liprod{\curl(\vec{u} \times \vec{v})}{\vec{w}}\right| \lsim \lnorm{|\vec{v}| \, |\nabla \vec{u}| \, |\vec{w}|}{1} + \lnorm{|\vec{u}| \, |\nabla \vec{v}| \, |\vec{w}|}{1}.
\end{equation*}
\end{lemma}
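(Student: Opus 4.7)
The plan is to reduce the curl of a cross product to a purely first-order expression in the derivatives of $\vec{u}$ and $\vec{v}$, then bound it pointwise. Concretely, I would apply the identity \eqref{eqn:curl-cross-product},
\begin{equation*}
    \curl(\vec{u} \times \vec{v}) = \vec{u} (\div \vec{v}) - \vec{v}(\div \vec{u}) + (\vec{v} \cdot \nabla) \vec{u} - (\vec{u} \cdot \nabla) \vec{v},
\end{equation*}
and use the hypothesis $\vec{u}, \vec{v} \in \HB^1_{\div}(\Omega)$ to discard the two divergence terms, leaving only the convective pieces $(\vec{v} \cdot \nabla)\vec{u}$ and $(\vec{u} \cdot \nabla)\vec{v}$.

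Next I would pair this reduced expression with $\vec{w}$ in the $\LB^2$ inner product and apply the triangle inequality, so that it suffices to estimate $|\liprod{(\vec{v} \cdot \nabla)\vec{u}}{\vec{w}}|$ and $|\liprod{(\vec{u} \cdot \nabla)\vec{v}}{\vec{w}}|$ separately. For each one, the Cauchy--Schwarz inequality applied pointwise gives $|(\vec{v} \cdot \nabla)\vec{u}| \leq |\vec{v}|\,|\nabla \vec{u}|$ and similarly for the other term, so that integrating produces precisely the two $\LB^1$ quantities on the right-hand side.

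There is no real obstacle here: the computation is a short chain of identities and pointwise inequalities. The only subtle point is making sure that the divergence-free assumption is used \emph{before} integrating against $\vec{w}$, since $\vec{w}$ is only in $\LB^2(\Omega)$ and so one cannot integrate by parts against it to shift derivatives around. Because the a.e.\ vanishing of $\div \vec{u}$ and $\div \vec{v}$ holds at the pointwise (distributional) level in $\Omega$, the identity can be applied directly inside the integrand, and the proof concludes by the triangle inequality.
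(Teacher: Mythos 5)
Your proposal is correct and follows exactly the paper's own argument: apply the identity \eqref{eqn:curl-cross-product}, drop the divergence terms using $\vec{u}, \vec{v} \in \HB^1_{\div}(\Omega)$, and bound the two convective terms pointwise by $|\vec{v}|\,|\nabla\vec{u}|\,|\vec{w}|$ and $|\vec{u}|\,|\nabla\vec{v}|\,|\vec{w}|$ before integrating. Nothing further is needed.
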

\begin{proof}
From \eqref{eqn:curl-cross-product} we know that
\begin{align*}
    \curl(\vec{u} \times \vec{v}) &= \vec{u} (\div \vec{v}) - \vec{v}(\div \vec{u}) + (\vec{v} \cdot \nabla) \vec{u} - (\vec{u} \cdot \nabla) \vec{v} \\
    &= (\vec{v} \cdot \nabla) \vec{u} - (\vec{u} \cdot \nabla) \vec{v}.
\end{align*}
Hence,
\begin{align*}
    \left|\liprod{\curl(\vec{u} \times \vec{v})}{\vec{w}}\right| &\lsim \left|\liprod{(\vec{v} \cdot \nabla) \vec{u}}{\vec{w}}\right| + \left|\liprod{(\vec{u} \cdot \nabla) \vec{v}}{\vec{w}}\right| \\
    &\lsim \lnorm{|\vec{v}| \, |\nabla \vec{u}| \, |\vec{w}|}{1} + \lnorm{|\vec{u}| \, |\nabla \vec{v}| \, |\vec{w}|}{1},
\end{align*}
thus completing the proof.
\end{proof}

It is a well-known result that the standard Landau-Lifshitz equation preserves the constraint $|\vec{m}| = 1$. We now show that this is also true of \eqref{eqn:fmhd-equiv e} which has the convective derivative $(\vec{v} \cdot \nabla) \vec{m}$. This will assist us in developing an equivalent formulation of \eqref{eqn:fmhd}.

\begin{proposition} \label{prop:implicit-unit-magnitude}
If the solutions $(\vec{v}, \vec{B}, \vec{m})$ are smooth, the magnetisation $\vec{m}$ satisfies $|\vec{m}| = 1$ on $[0,T] \times \Omega$ as soon as $|\vec{m}_0| = 1$.
\end{proposition}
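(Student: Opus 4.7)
The plan is to derive a linear transport equation for $|\vec{m}|^2 - 1$ and then run a standard energy estimate to conclude that this quantity vanishes identically.

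First I would take the dot product of \eqref{eqn:fmhd-equations e} with $2\vec{m}$. Since $\vec{m} \cdot (\vec{m} \times \vec{X}) = 0$ for any vector field $\vec{X}$, both cross-product terms on the right-hand side (involving $\Delta \vec{m} + \vec{B}$) annihilate. Using $2\vec{m} \cdot \partial_t \vec{m} = \partial_t |\vec{m}|^2$ and $2\vec{m} \cdot (\vec{v}\cdot\nabla)\vec{m} = (\vec{v}\cdot\nabla)|\vec{m}|^2$, I obtain
\begin{equation*}
    \partial_t |\vec{m}|^2 + (\vec{v} \cdot \nabla) |\vec{m}|^2 = 0 \quad \text{on } (0,T) \times \Omega.
\end{equation*}
Setting $\phi := |\vec{m}|^2 - 1$ yields $\partial_t \phi + (\vec{v} \cdot \nabla)\phi = 0$ with $\phi(0, \cdot) = 0$ by the assumption $|\vec{m}_0| = 1$.

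Next I would multiply this transport equation by $\phi$ and integrate over $\Omega$, which gives
\begin{equation*}
    \frac{1}{2} \frac{\rm d}{{\rm d}t} \intomega{\phi^2} + \intomega{\phi\, (\vec{v}\cdot\nabla)\phi} = 0.
\end{equation*}
Rewriting $\phi(\vec{v}\cdot\nabla)\phi = \tfrac{1}{2}\vec{v}\cdot\nabla(\phi^2) = \tfrac{1}{2}\div(\vec{v}\phi^2) - \tfrac{1}{2}(\div\vec{v})\phi^2$ and invoking the divergence theorem together with the boundary condition $\vec{v} = \vec{0}$ on $\partial\Omega$ from \eqref{eqn:boundary-conditions a} and the incompressibility $\div\vec{v} = 0$ from \eqref{eqn:fmhd-equations b}, the second integral vanishes. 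Hence
\begin{equation*}
    \frac{\rm d}{{\rm d}t} \lnorm{\phi}{2}^2 = 0,
\end{equation*}
so $\lnorm{\phi(t)}{2}^2 = \lnorm{\phi(0)}{2}^2 = 0$ for all $t \in [0,T]$, giving $\phi \equiv 0$ and therefore $|\vec{m}| = 1$ almost everywhere.

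There is no real obstacle here once smoothness is assumed: all the cross-product cancellations are algebraic, and both the divergence-free condition on $\vec{v}$ and its vanishing trace are available to kill the drift term in the energy identity. The only mild subtlety is to justify the integration-by-parts/chain-rule manipulations, which is immediate under the smoothness hypothesis of the proposition.
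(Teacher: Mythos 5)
Your proposal is correct and follows essentially the same route as the paper: take the dot product of the magnetisation equation with $\vec{m}$ so the cross-product terms vanish, obtain the transport equation for $|\vec{m}|^2-1$, and close with an $\LB^2(\Omega)$ energy identity using $\div \vec{v}=0$ (your explicit use of the no-slip condition $\vec{v}=\vec{0}$ on $\partial\Omega$ to discard the boundary flux is a slightly cleaner justification than the paper's appeal to a Neumann condition on $b$, but the argument is the same).
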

\begin{proof}
Taking the inner product of \eqref{eqn:fmhd-equations e} with $\vec{m}$, we obtain
\begin{equation*}
    \frac{\partial}{\partial t} |\vec{m}|^2 + (\vec{v} \cdot \nabla) |\vec{m}|^2 = 0
\end{equation*}
Setting $b = |\vec{m}|^2 - 1$ we see that $b(t,\vec{x})$ satisfies the PDE
\begin{equation*}
    \frac{\partial b}{\partial t} + (\vec{v} \cdot \nabla) b = 0,
\end{equation*}
subject to the Neumann boundary condition $\partial b / \partial \vec{n} = 0$ and the initial condition $b(0,\cdot) = |\vec{m}_0|^2 - 1 = 0$. As $\div \vec{v} = 0$, by taking the $L^2(\Omega)$ inner-product with $b$ we have
\begin{align*}
    \frac{1}{2} \ddt \lnorm{b}{2}^2 = 0,
\end{align*}
implying that
\begin{equation*}
    \lnorm{b(t,\cdot)}{2}^2 = \lnorm{b(0,\cdot)}{2}^2 = 0.
\end{equation*}
This ends the proof.
\end{proof}

An equivalent formulation of \eqref{eqn:fmhd} can now be developed. Using Proposition \ref{prop:implicit-unit-magnitude} and the elementary identities,
\begin{align}
    \vec{u} \times (\vec{v} \times \vec{w}) &= (\vec{u} \cdot \vec{w}) \vec{v} - (\vec{u} \cdot \vec{v}) \vec{w}, \label{eqn:triple-cross-product} \\
    \Delta (|\vec{u}|^2) &= 2|\nabla \vec{u}|^2 + 2(\vec{u} \cdot \Delta \vec{u}), \label{eqn:laplace-magnitude}
\end{align}
and we have
\begin{equation*}
    \vec{m} \times (\vec{m} \times \Delta \vec{m}) = -\Delta \vec{m} -|\nabla \vec{m}|^2 \vec{m}.
\end{equation*}
Moreover, by expanding the last term in \eqref{eqn:fmhd-equations a} we obtain
\begin{align*}
    \div \left[(\nabla \vec{m})^{\top} \nabla \vec{m}\right] &= \frac{1}{2} \nabla |\nabla \vec{m}|^2 + (\nabla \vec{m})^{\top} \Delta \vec{m},
\end{align*}
thus giving us the equivalent problem
\begin{subequations} \label{eqn:fmhd-equiv}
    \begin{alignat}{2}
    &\partial_t \vec{v} + (\vec{v} \cdot \nabla) \vec{v} - \Delta \vec{v} + \nabla p = \curl \vec{B} \times \vec{B} + (\vec{B} \cdot \nabla) \vec{m} + \nabla \left(\vec{m} \cdot \vec{B} - \frac{1}{2} |\nabla \vec{m}|^2\right) \nonumber \\
    &\qquad \qquad \qquad \qquad \qquad \qquad \qquad - (\nabla \vec{m})^{\top} \Delta \vec{m}, \label{eqn:fmhd-equiv a} \\
    &\div \vec{v} = 0, \label{eqn:fmhd-equiv b} \\
    &\partial_t \vec{B} + \curl^2 \vec{B} - \curl(\vec{v} \times \vec{B}) = \vec{0}, \label{eqn:fmhd-equiv c} \\
    &\div \vec{B} = 0, \label{eqn:fmhd-equiv d}\\
    &\partial_t \vec{m} + (\vec{v} \cdot \nabla) \vec{m} - \Delta \vec{m} = |\nabla \vec{m}|^2 \vec{m} + \vec{m} \times (\Delta \vec{m} + \vec{B}) - \vec{m} \times (\vec{m} \times \vec{B}), \label{eqn:fmhd-equiv e} \\
    &|\vec{m}| = 1, \label{eqn:fmhd-equiv f}
\end{alignat}
\end{subequations}
on $(0,T) \times \Omega$.

We now define the notion of weak solutions to \eqref{eqn:fmhd-equiv}. We first take the dot-product of \eqref{eqn:fmhd-equiv a} and a test function $\boldsymbol{\varphi}$, integrate over $\Omega$ and (formally) use integration by parts assuming $\div \boldsymbol{\varphi} = 0$ on $\Omega$ and $\boldsymbol{\varphi} = 0$ on $\partial\Omega$, and rearrange to obtain
\begin{align} \label{eqn:formal-velocity}
\begin{split}
    \liprod{\partial_t \vec{v}}{\boldsymbol{\varphi}}  + \liprod{\nabla \vec{v}}{\nabla \boldsymbol{\varphi}} &=  -\liprod{(\vec{v} \cdot \nabla) \vec{v}}{\boldsymbol{\varphi}} + \liprod{\curl \vec{B} \times \vec{B}}{\boldsymbol{\varphi}} +\liprod{(\vec{B} \cdot \nabla) \vec{m}}{\boldsymbol{\varphi}} - \liprod{(\nabla \vec{m})^{\top} \Delta \vec{m}}{\vec{\varphi}}.
\end{split}
\end{align}
All the terms on the right-hand side are well-defined if $\vec{v}, \vec{B}, \boldsymbol{\varphi} \in \HB^1(\Omega)$ and $\vec{m} \in \HB^2(\Omega)$ since $\HB^1(\Omega) \hookrightarrow \LB^6(\Omega) \hookrightarrow \LB^3(\Omega)$ and $\lnorm{\vec{u}}{3} \lsim \lnorm{\vec{u}}{2}^{1/2} \hnorm{\vec{u}}{1}^{1/2}$ by the Gagliardo-Nirenberg inequality (Theorem \ref{thm:gagliardo-nirenberg}). Similarly, we take the dot-product of \eqref{eqn:fmhd-equiv c} and a test function $\boldsymbol{\omega}$, and integrating (using \eqref{eqn:curl-integration-by-parts} and \eqref{eqn:boundary-conditions c}) to obtain
\begin{align} \label{eqn:formal-magnetic-field}
    &\liprod{\partial_t \vec{B}}{\boldsymbol{\omega}} + \liprod{\curl \vec{B}}{\curl \boldsymbol{\omega}}  = \liprod{\curl(\vec{v} \times \vec{B})}{\boldsymbol{\omega}}.
\end{align}
Considering Lemma \ref{lem:curl-vxb}, the conditions on $\vec{v}$ and $\vec{B}$ mentioned previously do suffice, along with $\boldsymbol{\omega} \in \HB^1(\Omega)$. Lastly, we take the dot-product of \eqref{eqn:fmhd-equiv e} and a test function $\boldsymbol{\xi}$, and integrate (noting \eqref{eqn:boundary-conditions d}) to obtain
\begin{align} \label{eqn:formal-magnetisation}
\begin{split}
    \liprod{\partial_t \vec{m}}{\boldsymbol{\xi}} + \liprod{\nabla \vec{m}}{\nabla \boldsymbol{\xi}}  &= -\liprod{(\vec{v} \cdot \nabla) \vec{m}}{\boldsymbol{\varphi}} + \liprod{|\nabla \vec{m}|^2 \vec{m}}{\boldsymbol{\xi}} + \liprod{\vec{m} \times \nabla \vec{m}}{\nabla \boldsymbol{\xi}}  + \liprod{\vec{m} \times \vec{B}}{\boldsymbol{\xi}} \\
    &\qquad + \liprod{\vec{m} \times (\vec{m} \times \vec{B})}{\boldsymbol{\xi}}.
\end{split}
\end{align}
Once again, the conditions on $\vec{v}$ and $\vec{B}$ mentioned previously do suffice, with the additional requirement that $\boldsymbol{\xi} \in \HB^1(\Omega)$ and $\vec{m} \in \HB^2(\Omega)$, where the latter is required to accommodate for the additional non-linearity. This additional regularity is important because $\HB^2(\Omega) \hookrightarrow \LB^{\infty}(\Omega)$, which is clearly required, and because $\HB^1(\Omega) \hookrightarrow \LB^6(\Omega) \hookrightarrow \LB^4(\Omega)$, which is useful for the term  $\liprod{|\nabla \vec{m}|^2 \vec{m}}{\boldsymbol{\xi}}$. This motivates the following definition of solutions to the problem \eqref{eqn:fmhd-equiv}.

\begin{definition} \label{def:weak-solution}
Given $T > 0$ and $\vec{v}_0, \vec{B}_0, \vec{m}_0 \in \LB^2(\Omega)$, the functions $(\vec{v}, \vec{B}, \vec{m})$ are said to be a \textit{weak solution} to (\ref{eqn:fmhd-equiv}) if
\begin{enumerate}[i)]
    \item The velocity $\vec{v}$, magnetic field $\vec{B}$, and magnetisation $\vec{m}$ have the regularity
    \begin{align*}
        \vec{v} &\in L^2(0,T; \HB_{0,\div}^1(\Omega)), \\
        \vec{B} &\in L^2(0,T; \HB_{n,\div}^1(\Omega)), \\
        \vec{m} &\in L^2(0,T; \HB^2(\Omega)).
    \end{align*}

    \item The magnetisation $\vec{m}$ has magnitude $$|\vec{m}| = 1 \qquad \text{a.e. on } [0,T] \times \Omega.$$
 
    \item For all $(\boldsymbol{\varphi}, \boldsymbol{\omega}, \boldsymbol{\xi}) \in \HB_{0,\div}^1(\Omega) \times \HB^1(\Omega) \times \HB^1(\Omega)$, the triplet $(\vec{v}, \vec{B}, \vec{m})$ satisfies
    
    \begin{align} \label{eqn:weak-formulation-velocity}
        \begin{split}
            &\liprod{\vec{v}(t)}{\boldsymbol{\varphi}}  + \intOt{\liprod{\nabla \vec{v}(s)}{\nabla \boldsymbol{\varphi}}}   +\intOt{\liprod{(\vec{v}(s) \cdot \nabla) \vec{v}(s)}{\boldsymbol{\varphi}}} - \intOt{\liprod{\curl \vec{B}(s) \times \vec{B}(s)}{\boldsymbol{\varphi}}}\\
            &\qquad - \intOt{\liprod{(\vec{B}(s) \cdot \nabla) \vec{m}(s)}{\boldsymbol{\varphi}}} + \intOt{\liprod{(\nabla \vec{m}(s))^{\top} \Delta \vec{m}(s)}{\vec{\varphi}}} = \liprod{\vec{v}_0}{\boldsymbol{\varphi}},
        \end{split} \\
        \begin{split} \label{eqn:weak-formulation-magnetic-field}
            &\liprod{\vec{B}(t)}{\boldsymbol{\omega}} + \intOt{\liprod{\curl \vec{B}(s)}{\curl \boldsymbol{\omega}}}  - \intOt{\liprod{\curl(\vec{v}(s) \times \vec{B}(s))}{\boldsymbol{\omega}}} = \liprod{\vec{B}_0}{\boldsymbol{\omega}},
        \end{split} \\
        \begin{split} \label{eqn:weak-formulation-magnetisation}
            &\liprod{\vec{m}(t)}{\boldsymbol{\xi}} + \intOt{\liprod{\nabla \vec{m}(s)}{\nabla \boldsymbol{\xi}}} +\intOt{\liprod{(\vec{v}(s) \cdot \nabla) \vec{m}(s)}{\boldsymbol{\varphi}}} - \intOt{\liprod{|\nabla \vec{m}(s)|^2 \vec{m}(s)}{\boldsymbol{\xi}}} \\
            &\qquad - \intOt{\liprod{\vec{m}(s) \times \nabla \vec{m}(s)}{\nabla \boldsymbol{\xi}}}  - \intOt{\liprod{\vec{m}(s) \times \vec{B}(s)}{\boldsymbol{\xi}}} - \intOt{\liprod{\vec{m}(s) \times (\vec{m}(s) \times \vec{B}(s))}{\boldsymbol{\xi}}} \\
            &\qquad \qquad = \liprod{\vec{m}_0}{\boldsymbol{\xi}}.
        \end{split}
    \end{align}
\end{enumerate}
\end{definition}

The main results of our paper are as follows. The first theorem establishes the existence of solutions in the sense of Definition \ref{def:weak-solution} while the second theorem establishes the stability of those solutions.
\begin{theorem}[Existence] \label{thm:existence}
Let $\Omega \subset \R^3$ be a bounded domain with a smooth boundary and assume that the initial data $(\vec{v}_0, \vec{B}_0, \vec{m}_0)$ satisfies $\vec{v}_0 \in \HB_{0,\div}^1(\Omega) \cap \HB^2(\Omega)$, $\vec{B}_0 \in \HB_{n,\div}^1(\Omega) \cap \HB^2(\Omega)$ with $\curl \vec{B}_0 \times \vec{n} = \vec{0}$ on $\partial\Omega$, and $\vec{m}_0 \in \HB^3(\Omega)$ with $|\vec{m}_0| = 1$ on $\Omega$ and $\partial \vec{m}_0 / \partial \vec{n} = \vec{0}$  on $\partial\Omega$. Given $T > 0$, there exists $T^* \in(0,T]$ such that $(\vec{v}, \vec{B}, \vec{m})$ is a solution to \eqref{eqn:fmhd-equiv} on $[0,T^*] \times \Omega$ in the sense of Definition \ref{def:weak-solution} with the additional regularity
\begin{align*}
    \vec{v} &\in L^{\infty}(0,T^*; \HB^2(\Omega)) \cap L^2(0,T^*; \W^{2,6}(\Omega)) \cap C([0,T^*]; \HB^1(\Omega)) \cap W^{1,\infty}(0,T^*; \LB^2(\Omega)), \\
    \vec{B} &\in L^{\infty}(0,T^*; \HB^2(\Omega)) \cap L^2(0,T^*; \W^{2,6}(\Omega)) \cap  C([0,T^*]; \HB^1(\Omega)) \cap W^{1,\infty}(0,T^*; \LB^2(\Omega)), \\
    \vec{m} &\in L^{\infty}(0,T^*; \HB^3(\Omega)) \cap C([0,T^*]; \HB^2(\Omega)) \cap W^{1,\infty}(0,T^*; \HB^1(\Omega)).
\end{align*}
Note that in this case, $(\vec{v}, \vec{B}, \vec{m})$ satisfies \eqref{eqn:fmhd-equiv} for all $t \in [0,T^*]$ and almost all $\vec{x} \in \Omega$.
\end{theorem}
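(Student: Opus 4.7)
The plan is to prove existence via the Faedo-Galerkin method, which the paper already announces is carried out in Section 3. I would choose orthogonal bases adapted to each unknown: eigenfunctions $\{\vec{a}_i\}$ of the Stokes operator $A$ for the velocity in $\HB_{0,\div}^1(\Omega)$, eigenfunctions $\{\vec{b}_i\}$ of the curl-curl operator under the boundary conditions $\vec{b}_i \cdot \vec{n} = 0$ and $\curl \vec{b}_i \times \vec{n} = \vec{0}$ for the magnetic field, and eigenfunctions $\{\vec{c}_i\}$ of $-\Delta$ with homogeneous Neumann condition for the magnetisation. Project the equations (using the Leray projection on the momentum equation to eliminate the pressure) onto the $n$-dimensional subspace spanned by the first $n$ basis elements to obtain an ODE system for the coefficients; because all nonlinearities are polynomial (at most cubic) in those coefficients, Picard--Lindel\"of provides a local solution $(\vec{v}_n,\vec{B}_n,\vec{m}_n)$ on some interval $[0,T_n]$.

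Next I would establish uniform high-order a priori estimates, which is where the real work lies. Define the energy
\[
\mathcal{E}_n(t) := \|\vec{v}_n(t)\|_{\HB^2(\Omega)}^2 + \|\vec{B}_n(t)\|_{\HB^2(\Omega)}^2 + \|\vec{m}_n(t)\|_{\HB^3(\Omega)}^2,
\]
and test the Galerkin equations against the natural high-order multipliers: $A\vec{v}_n$ for \eqref{eqn:fmhd-equiv a}, $\curl^2 \vec{B}_n$ for \eqref{eqn:fmhd-equiv c}, and $-\Delta \vec{m}_n$ together with $\Delta^2 \vec{m}_n$ (in a form adapted to the Neumann boundary condition) for \eqref{eqn:fmhd-equiv e}. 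The elliptic regularity from Lemma \ref{lem:curl-estimate} and Lemma \ref{lem:carbou} then upgrades the $\LB^2$-control of $\curl^2 \vec{B}_n$, $\Delta \vec{v}_n$, and $\nabla \Delta \vec{m}_n$ to full $\HB^2$ and $\HB^3$ control. The coupling nonlinearities -- convection, the Lorentz term $\curl \vec{B}\times \vec{B}$, the magnetic stress $(\nabla \vec{m})^{\top} \Delta \vec{m}$, the gyroscopic term $\vec{m}\times(\Delta\vec{m}+\vec{B})$, and the cubic damping terms including $|\nabla \vec{m}|^2 \vec{m}$ -- are estimated using H\"older's inequality, the Sobolev embedding $\HB^1 \hookrightarrow \LB^6$, the Gagliardo--Nirenberg inequality, and Lemmas \ref{lem:chen} and \ref{lem:curl-vxb}, followed by Young's inequality to absorb the top-order derivatives into the dissipative terms. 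The outcome should be a differential inequality
\[
\frac{d}{dt}\mathcal{E}_n(t) + \tfrac{1}{2}\bigl(\|\vec{v}_n\|_{\HB^3}^2 + \|\vec{B}_n\|_{\HB^3}^2 + \|\nabla \Delta \vec{m}_n\|_{\LB^2}^2\bigr) \leq C\bigl(1+\mathcal{E}_n(t)\bigr)^k
\]
for some integer $k\geq 2$. Comparison with the scalar ODE $y' = C(1+y)^k$ then produces a uniform existence time $T^* \in (0,T]$ depending only on $\mathcal{E}_n(0)$, which is itself uniformly controlled by the hypothesised regularity of $(\vec{v}_0,\vec{B}_0,\vec{m}_0)$. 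This simultaneously extends each Galerkin solution to $[0,T^*]$ and furnishes the $L^\infty(0,T^*;\HB^2)$ bounds on $\vec{v}_n,\vec{B}_n$, the $L^\infty(0,T^*;\HB^3)$ bound on $\vec{m}_n$, and via \eqref{eqn:lem-carbou-eqn-3} the $L^2(0,T^*;\W^{2,6})$ bounds on $\vec{v}_n,\vec{B}_n$. Returning to the equations then gives $\partial_t \vec{v}_n,\partial_t \vec{B}_n$ bounded in $L^\infty(0,T^*;\LB^2)$ and $\partial_t \vec{m}_n$ in $L^\infty(0,T^*;\HB^1)$.

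With these uniform bounds I would extract weakly and weakly-$\star$ convergent subsequences. The Aubin--Lions compactness lemma, applied with the temporal bounds just described and the high-order spatial bounds, yields strong convergence of $\vec{v}_n,\vec{B}_n$ in $C([0,T^*];\HB^1(\Omega))$ and of $\vec{m}_n$ in $C([0,T^*];\HB^2(\Omega))$. This strong convergence is exactly enough to pass to the limit in every quadratic and cubic term appearing in \eqref{eqn:weak-formulation-velocity}--\eqref{eqn:weak-formulation-magnetisation}, since each such term has at most two factors in $\HB^1$ (pairing well with $\HB^1 \hookrightarrow \LB^6$) or one factor in $\HB^2 \hookrightarrow \LB^\infty$. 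The constraint $|\vec{m}|=1$ in the limit follows because the limit $\vec{m}$ is smooth enough to apply Proposition \ref{prop:implicit-unit-magnitude} (since $|\vec{m}_0|=1$), and the initial data are attained through the continuity in $\HB^1$ and $\HB^2$ just obtained.

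I expect the main obstacle to be closing the high-order energy estimate. Several cross terms -- in particular $\liprod{(\nabla \vec{m})^{\top} \Delta \vec{m}}{A\vec{v}_n}$, the Lorentz term tested against $A\vec{v}_n$, and the cubic $|\nabla\vec{m}|^2 \vec{m}$ tested against $\Delta^2 \vec{m}_n$ -- lie right at the critical Sobolev scaling in three dimensions, so the interpolation must be arranged carefully to load the highest derivatives onto the dissipative terms while keeping only polynomial-in-$\mathcal{E}_n$ growth on the right-hand side. Lemmas \ref{lem:chen} and \ref{lem:carbou} are tailored exactly for this, but orchestrating all of the estimates to produce a single closed inequality for $\mathcal{E}_n$ is the technical crux on which the local time $T^*$ depends.
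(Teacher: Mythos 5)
Your skeleton (the same three eigenbases, Picard--Lindel\"of for the Galerkin ODE, uniform bounds, compactness, passage to the limit) matches the paper, but the central a priori estimate as you set it up does not close, and this is a genuine gap. Testing with $A\vec{v}_n$, $\curl^2\vec{B}_n$, $-\Delta\vec{m}_n$ and $\Delta^2\vec{m}_n$ produces the time derivative only of the \emph{lower-order} quantities $\lnorm{\nabla\vec{v}_n}{2}^2$, $\lnorm{\curl\vec{B}_n}{2}^2$ and $\lnorm{\nabla\vec{m}_n}{2}^2+\lnorm{\Delta\vec{m}_n}{2}^2$; the top-order norms enter only through the dissipation. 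Hence these multipliers cannot yield an inequality of the form $\ddt\mathcal{E}_n+\cdots\lsim(1+\mathcal{E}_n)^k$ with $\mathcal{E}_n$ containing $\hnorm{\vec{v}_n}{2}^2+\hnorm{\vec{B}_n}{2}^2+\hnorm{\vec{m}_n}{3}^2$: what you actually obtain is $L^\infty_t$ control of $\HB^1\times\HB^1\times\HB^2$ and only $L^2_t$ control at top order, which falls short of the asserted $L^{\infty}(0,T^*;\HB^2)$, $L^{\infty}(0,T^*;\HB^3)$ and $W^{1,\infty}$ regularity (and of the $L^\infty_t$ bounds on $\partial_t\vec{v}_n$, $\partial_t\vec{B}_n$, $\partial_t\vec{m}_n$ that you later want to read off from the equations). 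The paper's mechanism for reaching top order, following \cite{chen2011-article}, is different: differentiate the Galerkin system in time and estimate $\partial_t\vec{v}_k,\partial_t\vec{B}_k$ in $L^\infty_t\LB^2$ and $\partial_t\vec{m}_k$ in $L^\infty_t\HB^1$ (Lemmas \ref{lem:time-derivative-estimate-velocity}--\ref{lem:time-derivative-estimate-magnetisation-2}), bound these time derivatives at $t=0$ by the initial data (Proposition \ref{prop:limit-time-derivative-initial-data}), and then recover $\hnorm{\vec{v}_k}{2}$, $\hnorm{\vec{B}_k}{2}$, $\hnorm{\vec{m}_k}{3}$ pointwise in time from the \emph{stationary} elliptic form of the equations (Lemma \ref{lem:second-order-estimates-velocity-magnetic-field}), closing a nonlinear Gronwall inequality in the lower-order quantity $E(t)$ (Proposition \ref{prop:combined-estimates}). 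A direct energy method at the level of $\mathcal{E}_n$ would instead require multipliers of the order of $A^2\vec{v}_n$, $\curl^4\vec{B}_n$, $\Delta^3\vec{m}_n$, which is precisely where your ``critical scaling'' concern becomes acute; the time-derivative-plus-elliptic-recovery route is how the paper sidesteps it. You must either adopt that device or supply the missing higher-order estimates.

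Two further points need repair. First, the $L^2(0,T^*;\W^{2,6}(\Omega))$ regularity of $\vec{v}$ and $\vec{B}$ does not follow from \eqref{eqn:lem-carbou-eqn-3}, which only controls $\lnorm{\nabla^2\vec{u}}{3}$ and is used for the magnetisation; the paper obtains it after passing to the limit, by applying $\LB^6$ elliptic regularity for the Stokes and curl--curl problems to the limiting equations and using the $L^2_t\HB^1$ bounds on $\partial_t\vec{v}$ and $\partial_t\vec{B}$ (Proposition \ref{prop:W26-estimate}). Second, you cannot simply invoke Proposition \ref{prop:implicit-unit-magnitude} for the limit: the Galerkin approximants do not satisfy $|\vec{m}_k|=1$, and the limit solves the modified equation \eqref{eqn:fmhd-equiv e}, whose equivalence with the Landau--Lifshitz form presupposes the constraint; the paper instead derives a parabolic equation for $b=|\vec{m}|^2-1$ with right-hand side $2|\nabla\vec{m}|^2 b$ and concludes $b\equiv 0$ by Gronwall (Proposition \ref{prop:unit-magnitude}). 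Finally, to claim that the limit satisfies \eqref{eqn:fmhd-equiv} pointwise one still has to recover the pressure and gradient terms via the de Rham-type argument \cite[Proposition 1.1]{temam2001-book}, a step absent from your outline.
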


\begin{theorem}[Stability] \label{thm:stability}
Let $(\vec{v}_{1,0}, \vec{B}_{1,0}, \vec{m}_{1,0})$ and $(\vec{v}_{2,0}, \vec{B}_{2,0}, \vec{m}_{2,0})$ satisfy the assumptions for the initial data in Theorem \ref{thm:existence}, and let $(\vec{v}_1, \vec{B}_1, \vec{m}_1)$ and $(\vec{v}_2, \vec{B}_2, \vec{m}_2)$ be the two solutions associated with these initial data. If $\bar{\vec{v}} = \vec{v}_1 - \vec{v}_2$, $\bar{\vec{B}} = \vec{B}_1 - \vec{B}_2$, and $\bar{\vec{m}} = \vec{m}_1 - \vec{m}_2$, then there exists a constant $C$ independent of the initial data $(\bar{\vec{v}}_0, \bar{\vec{B}}_0, \bar{\vec{m}}_0)$ such that
\begin{equation*}
    \sup_{t \in [0,T']} \left(\lnorm{\bar{\vec{v}}(t)}{2} + \lnorm{\bar{\vec{B}}(t)}{2} + \hnorm{\bar{\vec{m}}(t)}{1}\right) \leq C \left( \lnorm{\bar{\vec{v}}_0}{2} + \lnorm{\bar{\vec{B}}_0}{2} + \hnorm{\bar{\vec{m}}_0}{1}\right),
\end{equation*}
where $T' = \min(T_1^*, T_2^*)$. Consequently, if $(\vec{v}_{1,0}, \vec{B}_{1,0}, \vec{m}_{1,0}) = (\vec{v}_{2,0}, \vec{B}_{2,0}, \vec{m}_{2,0})$ the solutions are unique.
\end{theorem}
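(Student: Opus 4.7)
The plan is to write down the equations satisfied by the differences $\bar{\vec{v}}=\vec{v}_1-\vec{v}_2$, $\bar{\vec{B}}=\vec{B}_1-\vec{B}_2$, $\bar{\vec{m}}=\vec{m}_1-\vec{m}_2$ by subtracting the two copies of \eqref{eqn:fmhd-equiv}, and then to derive a differential inequality of Gronwall type for the quantity
\[
E(t) := \tfrac{1}{2}\lnorm{\bar{\vec{v}}(t)}{2}^2 + \tfrac{1}{2}\lnorm{\bar{\vec{B}}(t)}{2}^2 + \tfrac{1}{2}\hnorm{\bar{\vec{m}}(t)}{1}^2.
\]
Since both solutions enjoy the regularity stated in Theorem~\ref{thm:existence}, the quantities $\vec{v}_i,\vec{B}_i$ are bounded in $L^\infty(0,T';\HB^2(\Omega))$ and $\vec{m}_i$ in $L^\infty(0,T';\HB^3(\Omega))$, so all coefficients appearing in the difference equations are a priori controlled on $[0,T']$.

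The key step is the choice of test functions. I would test the difference of the velocity equation with $\bar{\vec{v}}\in\HB^1_{0,\div}(\Omega)$, the difference of the induction equation with $\bar{\vec{B}}\in\HB^1_{n,\div}(\Omega)$, and the difference of the magnetisation equation with $\bar{\vec{m}}-\Delta\bar{\vec{m}}$, which is admissible thanks to $\partial\bar{\vec{m}}/\partial\vec{n}=\vec{0}$. After the standard cancellation $\langle(\vec{v}_i\cdot\nabla)\bar{\vec{v}},\bar{\vec{v}}\rangle=0$ (and the analogous cancellation in the magnetic and magnetisation equations from $\div\vec{v}_i=0$), the dissipative terms $\lnorm{\nabla\bar{\vec{v}}}{2}^2$, $\lnorm{\curl\bar{\vec{B}}}{2}^2$ (which controls $\lnorm{\nabla\bar{\vec{B}}}{2}^2$ via Lemma~\ref{lem:curl-estimate}), and $\lnorm{\nabla\bar{\vec{m}}}{2}^2+\lnorm{\Delta\bar{\vec{m}}}{2}^2$ all appear on the left-hand side and are available to absorb bad terms. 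The remaining terms are split by the standard telescoping
\[
f_1 g_1 - f_2 g_2 = \bar{f}\,g_1 + f_2\,\bar{g},
\]
producing products of a difference, an ambient field, and a test function. These are then controlled using Hölder, the Gagliardo--Nirenberg and Sobolev embeddings $\HB^1\hookrightarrow\LB^6$, $\HB^2\hookrightarrow\LB^\infty$, Lemma~\ref{lem:chen}, and Lemma~\ref{lem:curl-vxb}, with a Young inequality to move a small fraction of the dissipation back to the left, resulting in an estimate of the form $\tfrac{d}{dt}E(t)\le K(t)\,E(t)$ with $K\in L^1(0,T')$ depending on the bounds from Theorem~\ref{thm:existence}.

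The main obstacle will be the magnetisation equation, which contains the two genuinely quasilinear terms $|\nabla\vec{m}|^2\vec{m}$ and $\vec{m}\times\Delta\vec{m}$. For the latter, after the decomposition $\vec{m}_1\times\Delta\vec{m}_1-\vec{m}_2\times\Delta\vec{m}_2=\bar{\vec{m}}\times\Delta\vec{m}_1+\vec{m}_2\times\Delta\bar{\vec{m}}$, the crucial observation is that $(\vec{m}_2\times\Delta\bar{\vec{m}})\cdot\Delta\bar{\vec{m}}=0$, so this potentially top-order contribution is killed by the choice of test function $-\Delta\bar{\vec{m}}$; the remaining piece $\bar{\vec{m}}\times\Delta\vec{m}_1$ is handled using $\lnorm{\Delta\vec{m}_1}{\infty}$-type bounds coming from $\vec{m}_1\in L^\infty(0,T';\HB^3(\Omega))$. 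For $|\nabla\vec{m}_1|^2\vec{m}_1-|\nabla\vec{m}_2|^2\vec{m}_2=(\nabla\vec{m}_1+\nabla\vec{m}_2):\nabla\bar{\vec{m}}\,\vec{m}_1+|\nabla\vec{m}_2|^2\bar{\vec{m}}$, I would apply Lemma~\ref{lem:chen} and a Young inequality of the form $ab\le\varepsilon a^2+C_\varepsilon b^2$ to absorb $\lnorm{\Delta\bar{\vec{m}}}{2}^2$ on the left. A parallel analysis takes care of the coupling terms $\curl\bar{\vec{B}}\times\vec{B}_1$, $(\bar{\vec{B}}\cdot\nabla)\vec{m}_1$, $(\nabla\bar{\vec{m}})^\top\Delta\vec{m}_1$ in the velocity equation and $\curl(\bar{\vec{v}}\times\vec{B}_1)$ in the induction equation, all of which reduce, via Lemmas~\ref{lem:chen} and~\ref{lem:curl-vxb}, to expressions bounded by $E(t)$ times an $L^1$ function of time.

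Putting these estimates together and applying the integral form of Gronwall's inequality yields $E(t)\le e^{C}E(0)$ on $[0,T']$, which gives the claimed stability bound and, in particular, uniqueness when the initial data coincide.
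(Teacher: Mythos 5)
Your proposal follows essentially the same route as the paper's proof: subtract the two copies of \eqref{eqn:fmhd-equiv} satisfied a.e.\ by the strong solutions, test with $\bar{\vec{v}}$, $\bar{\vec{B}}$, and (in the paper, in two separate steps) $\bar{\vec{m}}$ and $-\Delta\bar{\vec{m}}$, telescope the nonlinearities as $f_1g_1-f_2g_2=\bar f g_1+f_2\bar g$, use the cross-product orthogonality to kill the top-order magnetisation term against $\Delta\bar{\vec{m}}$, control $\lnorm{\nabla\bar{\vec{B}}}{2}$ by $\lnorm{\curl\bar{\vec{B}}}{2}$ via Lemma \ref{lem:curl-estimate}, absorb small multiples of the dissipation by Young, and conclude by Gronwall. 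Two details in your sketch need correcting or completing. First, $\vec{m}_1\in L^\infty(0,T';\HB^3(\Omega))$ does \emph{not} give an $\lnorm{\Delta\vec{m}_1}{\infty}$ bound in three dimensions ($\HB^3(\Omega)\hookrightarrow\W^{1,\infty}(\Omega)$, but second derivatives land only in $\HB^1(\Omega)\hookrightarrow\LB^6(\Omega)$); the leftover term $\liprod{\bar{\vec{m}}\times\Delta\vec{m}_1}{\Delta\bar{\vec{m}}}$ should instead be estimated by H\"older as $\lnorm{\bar{\vec{m}}}{6}\lnorm{\Delta\vec{m}_1}{3}\lnorm{\Delta\bar{\vec{m}}}{2}\lsim\hnorm{\bar{\vec{m}}}{1}\hnorm{\vec{m}_1}{3}\lnorm{\Delta\bar{\vec{m}}}{2}$ and then absorbed by Young; the fix is routine, but the $\LB^\infty$ claim as written is false. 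Second, your blanket statement that the Gronwall coefficient $K(t)$ is controlled by the $L^\infty$-in-time bounds of Theorem \ref{thm:existence} needs care: if, as in the paper, one estimates the convection and stretching differences using $\lnorm{\nabla\vec{v}_i}{\infty}$ and $\lnorm{\nabla\vec{B}_2}{\infty}$, these are not bounded by the $\HB^2(\Omega)$ norm, and the paper invokes $\W^{1,6}(\Omega)\hookrightarrow\LB^\infty(\Omega)$ together with the $L^2(0,T^*;\W^{2,6}(\Omega))$ regularity of Proposition \ref{prop:W26-estimate} to obtain $K\in L^1(0,T')$; alternatively, your H\"older/Gagliardo--Nirenberg treatment (placing the difference in $\LB^6$ and the coefficient in $\LB^3$, bounded by $\hnorm{\cdot}{2}$) avoids the $\LB^\infty$ norms and hence Proposition \ref{prop:W26-estimate} altogether. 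With these adjustments your outline reproduces the paper's argument.
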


\section{Faedo-Galerkin Approximation} \label{sec:faedo-galerkin}

In this section we use the Faedo-Galerkin approach to construct approximate solutions to \eqref{eqn:fmhd-equiv} and establish critical \textit{a priori} estimates for them.


\subsection{Eigenfunctions}
Let $\{\boldsymbol{\varphi}_i\}_{i=1}^{\infty}, \{\boldsymbol{\omega}_i\}_{i=1}^{\infty}$ and $\{\boldsymbol{\xi}_i\}_{i=1}^{\infty}$ denote the orthonormal bases of $\HB^1_{0,\div}(\Omega)$, $\HB_{n,\div}^1(\Omega)$ and $\LB^2(\Omega)$, respectively. They consist of the smooth eigenfunctions of the Stokes problem \cite[Section 2.6]{temam2001-book}
\begin{equation} \label{stokes-problem}
    \begin{cases}
        -\Delta \boldsymbol{\varphi}_i + \nabla p_i = \lambda_i \boldsymbol{\varphi} & \text{on } \Omega, \\
        \div \boldsymbol{\varphi}_i = 0 & \text{on } \Omega, \\
        \boldsymbol{\varphi}_i = 0 & \text{on } \partial\Omega,
    \end{cases}
\end{equation}
the steady state magnetic problem \cite[Section 2.2.2.2]{gerbeau2006-book}
\begin{equation} \label{steady-state-magnetic-problem}
    \begin{cases}
        \curl^2 \boldsymbol{\omega}_i = \mu_i \boldsymbol{\omega}_i & \text{on } \Omega, \\
        \boldsymbol{\omega}_i \cdot \vec{n} = 0 & \text{on } \partial\Omega, \\
        \curl \boldsymbol{\omega}_i \times \vec{n} = \vec{0} & \text{on } \partial \Omega,
    \end{cases}
\end{equation}
and the Laplacian problem \cite[Theorem 11.5.2]{jost2013-book}
\begin{equation} \label{laplacian-problem}
    \begin{cases}
        -\Delta \boldsymbol{\xi}_i = \sigma_i \boldsymbol{\xi}_i & \text{on } \Omega, \\[1ex]
        \displaystyle\frac{\partial \boldsymbol{\xi}_i}{\partial \vec{n}} = \vec{0} & \text{on } \partial\Omega.
    \end{cases}
\end{equation}
The eigenvalues $\lambda_i, \mu_i$ and $\sigma_i$ are all strictly positive and increasing as $i \to \infty$. For any~$k\in\N$, let
\begin{equation*}
    \V_k := \mathrm{span}\{\boldsymbol{\varphi}_1, \ldots, \boldsymbol{\varphi}_k\}, \qquad \XB_k := \mathrm{span}\{\boldsymbol{\omega}_1, \ldots, \boldsymbol{\omega}_k\}, \qquad \YB_k := \mathrm{span}\{\boldsymbol{\xi}_1, \ldots, \boldsymbol{\xi}_k\}.
\end{equation*}
We seek a Galerkin solution to (\ref{eqn:fmhd-equiv}) in within these spaces. That is we seek approximate solutions $(\vec{v}_k, \vec{B}_k, \vec{m}_k) \in \V_k \times \XB_k \times \YB_k$ of the form
\begin{equation*}
    \vec{v}_k(t,\vec{x}) = \sum_{i=1}^k a_i(t) \boldsymbol{\varphi}_i(\vec{x}), \qquad \vec{B}_k(t,\vec{x}) = \sum_{i=1}^k b_i(t) \boldsymbol{\omega}_i(\vec{x}), \qquad \vec{m}_k(t,\vec{x}) = \sum_{i=1}^k c_i(t) \boldsymbol{\xi}_i(\vec{x})
\end{equation*}
such that for all $(\boldsymbol{\varphi}, \boldsymbol{\omega}, \boldsymbol{\xi}) \in \V_k \times \XB_k \times \YB_k$
\begin{align}
\begin{split} \label{eqn:galerkin-velocity}
&\liprod{\partial_t \vec{v}_k}{\boldsymbol{\varphi}} + \liprod{\nabla \vec{v}_k}{\nabla \boldsymbol{\varphi}} = -\liprod{(\vec{v}_k \cdot \nabla)\vec{v}_k}{\boldsymbol{\varphi}} + \liprod{\curl \vec{B}_k \times \vec{B}_k}{\boldsymbol{\varphi}} + \liprod{(\vec{B}_k \cdot \nabla) \vec{m}_k}{\boldsymbol{\varphi}} \\
&\qquad \qquad \qquad \qquad \qquad \qquad  - \liprod{(\nabla \vec{m}_k)^{\top} \Delta \vec{m}_k}{\vec{\varphi}},
\end{split} \\[2ex]
&\liprod{\partial_t \vec{B}_k}{\boldsymbol{\boldsymbol{\omega}}} + \liprod{\curl \vec{B}_k}{\curl \boldsymbol{\omega}} = \liprod{\curl(\vec{v}_k \times \vec{B}_k)}{\boldsymbol{\omega}}, \label{eqn:galerkin-magnetic-field} \\[2ex]
\begin{split} \label{eqn:galerkin-magnetisation}
    &\liprod{\partial_t \vec{m}_k}{\boldsymbol{\xi}} + \liprod{\nabla \vec{m}_k}{\nabla \boldsymbol{\xi}} = -\liprod{(\vec{v}_k \cdot \nabla)\vec{m}_k}{\boldsymbol{\xi}} + \liprod{|\nabla \vec{m}_k|^2 \vec{m}_k}{\boldsymbol{\xi}} + \liprod{\vec{m}_k \times \Delta \vec{m}_k}{\boldsymbol{\xi}} \\
    &\qquad \qquad \qquad \qquad \qquad \qquad + \liprod{\vec{m}_k \times \vec{B}_k}{\boldsymbol{\xi}}  + \liprod{\vec{m}_k \times (\vec{m}_k \times \vec{B}_k)}{\boldsymbol{\xi}}.
\end{split}
\end{align}
Let $\{(\vec{v}_0^k, \vec{B}_0^k, \vec{m}_0^k)\}_{k=1}^{\infty} \subset \V_k \times \XB_k \times \YB_k$ be a sequence of functions such that $(\vec{v}_0^k, \vec{B}_0^k) \to (\vec{v}_0, \vec{B}_0)$ in $\HB^2(\Omega)$ and $\vec{m}_0^k \to \vec{m}_0$ in $\HB^3(\Omega)$. Choose $\{a_i(0)\}_{i=1}^k$, $\{b_i(0)\}_{i=1}^k$, and $\{c_i(0)\}_{i=1}^k$ such that
\begin{equation} \label{eqn:galerkin-initial-data}
    \vec{v}_k(0,\cdot) = \vec{v}_0^k, \qquad \vec{B}_k(0,\cdot) = \vec{B}_0^k, \qquad \vec{m}_k(0,\cdot) = \vec{m}_0^k.
\end{equation}
Let $\vec{z}(t) = (\vec{a}(t), \vec{b}(t), \vec{c}(t))^{\top}$ i.e., the vector collection of the time-dependent coefficients of $\vec{v}_k, \vec{B}_k$ and $\vec{m}_k$. Then as the eigenvectors are orthonormal bases, (\ref{eqn:galerkin-velocity})--(\ref{eqn:galerkin-magnetisation}) amounts to finding $\vec{z}$ such that
\begin{equation} \label{eqn:galerkin-ode}
    \vec{z}'(t) = \vec{F}(\vec{z}(t)),
\end{equation}
where $\vec{F}$ is a vector-valued multivariate polynomial. Here, the initial values $\vec{z}(0)$ are given by the approximate initial data $(\vec{v}_0^k, \vec{B}_0^k, \vec{m}_0^k)$. As $\vec{F}$ is a multivariate polynomial in each of its components, it is locally Lipschitz and thereby the Cauchy-Lipschitz theorem guarantees the local existence and uniqueness of a solution $\vec{z}$. Hence, there exists $\vec{v}_k, \vec{B}_k$ and $\vec{m}_k$ that solves the weak formulation. Moreover, as $\vec{F}$ is smooth $\vec{z}(t)$ is also smooth in time.

Given that approximate solutions exist, we now seek to bound this sequence of solutions in certain spaces.

\subsection{\textit{A Priori} Estimates}

For simplicity of notation we let
\begin{align} \label{eqn:j-def}
    \begin{split}
        J(t) &:= \hnorm{\vec{v}_k(t)}{2}^2 + \hnorm{\vec{B}_k(t)}{2}^2 + \hnorm{\vec{m}_k(t)}{3}^2 \\
        &\qquad + \lnorm{\partial_t \vec{v}_k(t)}{2}^2 + \lnorm{\partial_t \vec{B}_k(t)}{2}^2 + \hnorm{\partial_t \vec{m}_k(t)}{1}^2,
    \end{split}
\end{align}
and will often abbreviate $J(t)$ as $J$ below.

The following lemmas establish precursory inequalities that will be collated together at the end of this section to produce estimates for the Galerkin solution. The first two lemmas primarily deal with the $\HB^1(\Omega)$ norms of the Galerkin solutions.

\begin{lemma} \label{lem:first-order-estimate-1}
For each $k \in \N$ we have
\begin{align*}
    \ddt \lnorm{\vec{v}_k}{2}^2 + \lnorm{\nabla \vec{v}_k}{2}^2 &\lsim J + J^2, \\
    \ddt \lnorm{\vec{B}_k}{2}^2 + \lnorm{\curl \vec{B}_k}{2}^2 &\lsim J^{3/2}, \\
    \ddt \lnorm{\vec{m}_k}{2}^2 + \lnorm{\nabla \vec{m}_k}{2}^2 &\lsim J^2.
\end{align*}
\end{lemma}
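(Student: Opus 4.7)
The plan is to test each Galerkin equation in \eqref{eqn:galerkin-velocity}--\eqref{eqn:galerkin-magnetisation} with the corresponding approximate solution $\vec{v}_k$, $\vec{B}_k$, $\vec{m}_k$ itself, exploit the divergence-free conditions and cross-product antisymmetries to discard the terms that vanish identically, and then estimate the remaining trilinear or quartic terms by Hölder's inequality coupled with the Sobolev embeddings $\HB^1(\Omega)\hookrightarrow\LB^6(\Omega)$ and $\HB^2(\Omega)\hookrightarrow\LB^\infty(\Omega)$, so that every upper bound is expressed directly in terms of the quantity $J$ in \eqref{eqn:j-def}.

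For the velocity estimate I would set $\boldsymbol{\varphi}=\vec{v}_k$, observe that $\liprod{(\vec{v}_k\cdot\nabla)\vec{v}_k}{\vec{v}_k}=0$ since $\div\vec{v}_k=0$ and $\vec{v}_k|_{\partial\Omega}=\vec{0}$, and then distribute the factors in each remaining trilinear term across $\LB^2$, $\LB^3$ or $\LB^6$, and $\LB^\infty$ spaces as convenient. The Lorentz-type term $\liprod{\curl\vec{B}_k\times\vec{B}_k}{\vec{v}_k}$ is bounded by $\lnorm{\curl\vec{B}_k}{2}\lnorm{\vec{B}_k}{\infty}\lnorm{\vec{v}_k}{2}$, the Kelvin-force term by $\lnorm{\vec{B}_k}{\infty}\lnorm{\nabla\vec{m}_k}{2}\lnorm{\vec{v}_k}{2}$, and the Maxwell-stress term $\liprod{(\nabla\vec{m}_k)^\top\Delta\vec{m}_k}{\vec{v}_k}$ by $\lnorm{\nabla\vec{m}_k}{\infty}\lnorm{\Delta\vec{m}_k}{2}\lnorm{\vec{v}_k}{2}$, where $\HB^3\hookrightarrow\W^{1,\infty}$ controls $\lnorm{\nabla\vec{m}_k}{\infty}$. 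Every term then contributes a factor $J^\alpha$ with $\alpha\in\{1,3/2,2\}$, and Young's inequality $J^{3/2}\lesssim J+J^2$ delivers the stated bound.

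For the magnetic-field estimate I would set $\boldsymbol{\omega}=\vec{B}_k$ and apply Lemma \ref{lem:curl-vxb} to the right-hand side of \eqref{eqn:galerkin-magnetic-field}: the resulting expressions $\lnorm{\vec{v}_k}{\infty}\lnorm{\nabla\vec{B}_k}{2}\lnorm{\vec{B}_k}{2}$ and $\lnorm{\vec{B}_k}{\infty}\lnorm{\nabla\vec{v}_k}{2}\lnorm{\vec{B}_k}{2}$ are each dominated by $J^{3/2}$ once $\LB^\infty$ is controlled by $\HB^2$ and the remaining factors by $J^{1/2}$.

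For the magnetisation estimate I would set $\boldsymbol{\xi}=\vec{m}_k$; the decisive observation is that nearly every nonlinear contribution on the right-hand side of \eqref{eqn:galerkin-magnetisation} vanishes. The convective term drops by $\div\vec{v}_k=0$, the terms $\liprod{\vec{m}_k\times\Delta\vec{m}_k}{\vec{m}_k}$ and $\liprod{\vec{m}_k\times\vec{B}_k}{\vec{m}_k}$ are zero because a cross product is orthogonal to each of its factors, and $\liprod{\vec{m}_k\times(\vec{m}_k\times\vec{B}_k)}{\vec{m}_k}=0$ follows from the BAC--CAB identity \eqref{eqn:triple-cross-product}. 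Only $\liprod{|\nabla\vec{m}_k|^2\vec{m}_k}{\vec{m}_k}$ survives, and it is dominated by $\lnorm{\nabla\vec{m}_k}{2}^2\lnorm{\vec{m}_k}{\infty}^2\lesssim J^2$. The main technical obstacle throughout is routine but delicate bookkeeping: choosing Hölder exponents so that each surviving factor is controlled by a norm already appearing in $J$, and recognising all the cancellations in the magnetisation equation so that the bound is genuinely $J^2$ and not some larger power.
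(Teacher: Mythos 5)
Your proposal is correct and follows essentially the same route as the paper: test \eqref{eqn:galerkin-velocity}--\eqref{eqn:galerkin-magnetisation} with $\vec{v}_k$, $\vec{B}_k$, $\vec{m}_k$, kill the convective terms via $\div\vec{v}_k=0$ and the cross-product terms via pointwise orthogonality, and bound the survivors with H\"older, $\HB^1(\Omega)\hookrightarrow\LB^6(\Omega)$, $\HB^2(\Omega)\hookrightarrow\LB^{\infty}(\Omega)$ (plus Lemma \ref{lem:curl-vxb} for the induction equation), yielding powers $J^{\alpha}$ with $\alpha\in\{1,3/2,2\}$ and Young's inequality. The only differences are cosmetic choices of which factor absorbs the $\LB^{\infty}$ norm and your use of \eqref{eqn:triple-cross-product} where plain orthogonality of $\vec{m}_k\times(\cdot)$ to $\vec{m}_k$ already suffices.
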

\begin{proof}
It is noted that these estimates hold for $J(t)$ with $\hnorm{\vec{m}_k}{2}^2$ only. However, we use $\hnorm{\vec{m}_k}{3}^2$ as this will be required in subsequent lemmas. Setting $\boldsymbol{\varphi} = \vec{v}_k$ in (\ref{eqn:galerkin-velocity}), and using Young's inequality and the Sobolev embedding $\HB^2(\Omega) \hookrightarrow \LB^{\infty}(\Omega)$ we have
\begin{align*}
    \frac{1}{2} \ddt \lnorm{\vec{v}_k}{2}^2 + \lnorm{\nabla \vec{v}_k}{2}^2 &\lsim \left|\liprod{(\nabla \vec{v}_k) \vec{v}_k}{\vec{v}_k}\right| + \lnorm{|\curl \vec{B}_k| \, |\vec{B}_k| \, |\vec{v}_k|}{1} + \lnorm{|\vec{B}_k| \, |\nabla \vec{m}_k| \, |\vec{v}_k|}{1} \\
    &\qquad + \lnorm{|\nabla \vec{m}| \, |\Delta \vec{m}| \, |\vec{v}_k|}{1} \\
    &\lsim \lnorm{\vec{B}_k}{\infty} \lnorm{\curl \vec{B}_k}{2} \lnorm{\vec{v}_k}{2} \\
    &\qquad + \lnorm{\vec{B}_k}{\infty} \lnorm{\nabla \vec{m}_k}{2} \lnorm{\vec{v}_k}{2} \\
    &\qquad + \lnorm{\vec{v}_k}{\infty} \lnorm{\nabla \vec{m}_k}{2} \lnorm{\Delta \vec{m}_k}{2} \\
    &\lsim \hnorm{\vec{v}}{2}^2 + \hnorm{\vec{B}_k}{2}^4 + \hnorm{\vec{m}_k}{2}^4 \\
    &\lsim J + J^2.
\end{align*}
Here, the term involving $(\vec{v}_k \cdot \nabla) \vec{v}_k$ vanishes because $\vec{v}_k$ is divergence free. The first inequality is thereby proved. The second inequality follows similarly by setting $\boldsymbol{\omega} = \vec{B}_k$ in (\ref{eqn:galerkin-magnetic-field}), integrating by parts and using Lemma \ref{lem:curl-vxb} to obtain
\begin{align*}
    \frac{1}{2} \ddt \lnorm{\vec{B}_k}{2}^2 + \lnorm{\curl \vec{B}_k}{2}^2 &\lsim \lnorm{|\vec{v}_k| |\nabla \vec{B}_k| |\vec{B}_k|}{1} + \lnorm{|\vec{B}_k| |\nabla \vec{v}_k| |\vec{B}_k|}{1} \\
    &\lsim \lnorm{\vec{B}_k}{\infty} \lnorm{\nabla \vec{v}_k}{2} \lnorm{\vec{B}_k}{2} \\
    &\qquad + \lnorm{\vec{B}_k}{\infty} \lnorm{\vec{v}_k}{2} \lnorm{\nabla \vec{B}_k}{2} \\
    &\lsim \hnorm{\vec{B}_k}{2}^2 \lnorm{\nabla \vec{v}_k}{2} + \hnorm{\vec{B}_k}{2}^2 \lnorm{\vec{v}_k}{2} \\
    &\lsim \hnorm{\vec{v}_k}{2}^3 + \hnorm{\vec{B}_k}{2}^3 \\
    &\lsim J^{3/2}.
\end{align*}
Finally, the third inequality follows by also setting $\boldsymbol{\xi} = \vec{m}_k$ in (\ref{eqn:galerkin-magnetisation}) to obtain
\begin{align*}
    \frac{1}{2} \ddt \lnorm{\vec{m}_k}{2}^2 + \lnorm{\nabla \vec{m}_k}{2}^2 \lsim \lnorm{\vec{m}_k}{\infty}^2 \lnorm{\vec{m}_k}{2}^2 \lsim \hnorm{\vec{m}_k}{2}^4 \lsim J^2.
\end{align*}
Once again, the term involving $(\vec{v}_k \cdot \nabla) \vec{m}_k$ vanishes because $\vec{v}_k$ is divergence free.
\end{proof}

\begin{lemma} \label{lem:first-order-estimate-2}
For each $k \in \N$ we have
\begin{align*}
     \ddt \lnorm{\nabla \vec{v}_k}{2} + \lnorm{\partial_t \vec{v}_k}{2}^2 &\lsim J + J^2, \\
     \ddt \lnorm{\curl \vec{B}_k}{2}+\lnorm{\partial_t \vec{B}_k}{2}^2  &\lsim J + J^2.
\end{align*}
\end{lemma}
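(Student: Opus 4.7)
The plan is to test \eqref{eqn:galerkin-velocity} with $\boldsymbol{\varphi} = \partial_t \vec{v}_k$ and \eqref{eqn:galerkin-magnetic-field} with $\boldsymbol{\omega} = \partial_t \vec{B}_k$, both of which are admissible since $\V_k$ and $\XB_k$ are time-independent finite-dimensional subspaces. The first substitution converts the left-hand side into $\lnorm{\partial_t \vec{v}_k}{2}^2 + \tfrac{1}{2}\ddt\lnorm{\nabla \vec{v}_k}{2}^2$, and the second into $\lnorm{\partial_t \vec{B}_k}{2}^2 + \tfrac{1}{2}\ddt\lnorm{\curl \vec{B}_k}{2}^2$. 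The remaining work is to bound each nonlinear right-hand side by a sum of the form $\varepsilon \lnorm{\partial_t \vec{v}_k}{2}^2 + C(J+J^2)$ (respectively $\varepsilon \lnorm{\partial_t \vec{B}_k}{2}^2 + C(J+J^2)$), after which the $\varepsilon$-pieces are absorbed on the left via Young's inequality.

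For the velocity equation I would estimate each of the four nonlinearities by placing one factor in $\LB^\infty$ via the Sobolev embedding $\HB^2(\Omega) \hookrightarrow \LB^\infty(\Omega)$, another factor in $\LB^2$, and $\partial_t \vec{v}_k$ in $\LB^2$. A representative computation is
\begin{equation*}
\bigl|\liprod{\curl \vec{B}_k \times \vec{B}_k}{\partial_t \vec{v}_k}\bigr| \lsim \lnorm{\vec{B}_k}{\infty}\lnorm{\curl \vec{B}_k}{2}\lnorm{\partial_t \vec{v}_k}{2} \lsim \hnorm{\vec{B}_k}{2}^2\lnorm{\partial_t \vec{v}_k}{2},
\end{equation*}
which Young's inequality controls by $\tfrac{1}{8}\lnorm{\partial_t \vec{v}_k}{2}^2 + CJ^2$; the convective term $\liprod{(\vec{v}_k\cdot\nabla)\vec{v}_k}{\partial_t \vec{v}_k}$ and the coupling term $\liprod{(\vec{B}_k\cdot\nabla)\vec{m}_k}{\partial_t \vec{v}_k}$ obey exactly the same recipe. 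For the magnetic equation, the single nonlinearity $\liprod{\curl(\vec{v}_k\times\vec{B}_k)}{\partial_t \vec{B}_k}$ is handled identically after invoking Lemma \ref{lem:curl-vxb} to split it into the pointwise products $\lnorm{|\vec{v}_k||\nabla\vec{B}_k||\partial_t\vec{B}_k|}{1} + \lnorm{|\vec{B}_k||\nabla\vec{v}_k||\partial_t\vec{B}_k|}{1}$, then applying Hölder and Young.

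The only term deserving genuine care is $\liprod{(\nabla \vec{m}_k)^{\top} \Delta \vec{m}_k}{\partial_t \vec{v}_k}$, since in three dimensions $\nabla \vec{m}_k$ is not automatically bounded. This is precisely where the definition \eqref{eqn:j-def} is tuned to include $\hnorm{\vec{m}_k}{3}^2$: because $\nabla \vec{m}_k \in \HB^2(\Omega) \hookrightarrow \LB^\infty(\Omega)$, one has $\lnorm{\nabla \vec{m}_k}{\infty} \lsim \hnorm{\vec{m}_k}{3}$, so this term is bounded by $\hnorm{\vec{m}_k}{3}\lnorm{\Delta\vec{m}_k}{2}\lnorm{\partial_t \vec{v}_k}{2} \lsim \hnorm{\vec{m}_k}{3}^2\lnorm{\partial_t \vec{v}_k}{2}$, and Young's inequality then yields $\tfrac{1}{8}\lnorm{\partial_t \vec{v}_k}{2}^2 + CJ^2$. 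Collecting all such bounds and absorbing the $\partial_t$ norms on the left delivers both claimed inequalities. I expect no further obstacle; the structure mirrors Lemma \ref{lem:first-order-estimate-1} but with $\partial_t \vec{v}_k$ and $\partial_t \vec{B}_k$ in the test-function slot rather than $\vec{v}_k$ and $\vec{B}_k$ themselves.
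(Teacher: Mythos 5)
Your proposal is correct and follows essentially the same route as the paper: test \eqref{eqn:galerkin-velocity} with $\partial_t \vec{v}_k$ and \eqref{eqn:galerkin-magnetic-field} with $\partial_t \vec{B}_k$, estimate the nonlinearities via $\HB^2(\Omega)\hookrightarrow\LB^\infty(\Omega)$ (using $\hnorm{\vec{m}_k}{3}$ to control $\lnorm{\nabla\vec{m}_k}{\infty}$) and Lemma \ref{lem:curl-vxb}, then apply Young's inequality. The only cosmetic difference is that the paper does not need your $\varepsilon$-absorption step, since $\lnorm{\partial_t \vec{v}_k}{2}^2$ and $\lnorm{\partial_t \vec{B}_k}{2}^2$ are themselves part of $J$ and may simply be left on the right-hand side.
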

\begin{proof}
Here we do the same as in Lemma \ref{lem:first-order-estimate-1} and deal with the non-linear terms similarly. First, we set $\boldsymbol{\varphi} = \partial_t \vec{v}_k$ in (\ref{eqn:galerkin-velocity}) and use Young's inequality and the Sobolev embedding $\HB^2(\Omega) \hookrightarrow \LB^{\infty}(\Omega)$ to deduce that
\begin{align*}
    \lnorm{\partial_t \vec{v}_k}{2}^2 + \frac{1}{2} \ddt \lnorm{\nabla \vec{v}_k}{2} &\lsim \lnorm{\vec{v}_k}{\infty} \lnorm{\nabla \vec{v}_k}{2} \lnorm{\partial_t \vec{v}_k}{2} \\
    &\qquad + \lnorm{\vec{B}_k}{\infty} \lnorm{\curl \vec{B}_k}{2} \lnorm{\partial_t \vec{v}_k}{2} \\
    &\qquad + \lnorm{\vec{B}_k}{\infty} \lnorm{\nabla \vec{m}_k}{2} \lnorm{\partial_t \vec{v}_k}{2} \\
    &\qquad + \lnorm{\nabla \vec{m}_k}{\infty} \lnorm{\Delta \vec{m}_k}{2} \lnorm{\partial_t \vec{v}_k}{2} \\
    &\lsim \hnorm{\vec{v}_k}{2}^4 + \hnorm{\vec{B}_k}{2}^4 + \hnorm{\vec{m}_k}{3}^4 + \lnorm{\partial_t \vec{v}_k}{2}^2 \\
    &\lsim J + J^2.
\end{align*}
For the second inequality, we set $\boldsymbol{\omega} = \partial_t \vec{B}_k$ in \eqref{eqn:galerkin-magnetic-field} and use Lemma \ref{lem:curl-vxb} and Young's inequality to obtain
\begin{align*}
    \lnorm{\partial_t \vec{B}_k}{2}^2 + \frac{1}{2} \ddt \lnorm{\curl \vec{B}_k}{2} &\lsim \lnorm{|\vec{v}_k| |\nabla \vec{B}_k| |\partial_t \vec{B}_k|}{1} + \lnorm{|\vec{B}_k| |\nabla \vec{v}_k| |\partial_t \vec{B}_k|}{1} \\
    &\lsim \lnorm{\vec{B}_k}{\infty} \lnorm{\nabla \vec{v}_k}{2} \lnorm{\partial_t \vec{B}_k}{2} \\
    &\qquad + \lnorm{\vec{v}_k}{\infty} \lnorm{\nabla \vec{B}_k}{2} \lnorm{\partial_t \vec{B}_k}{2} \\
    &\lsim \hnorm{\vec{v}_k}{2}^4 + \hnorm{\vec{B}_k}{2}^4 + \lnorm{\partial_t \vec{B}_k}{2}^2 \\
    &\lsim J + J^2,
\end{align*}
thereby proving the second inequality.
\end{proof}

The solution $\vec{m}_k$ of \eqref{eqn:galerkin-magnetisation} requires stronger estimates on its $\HB^2$-norm, which is proved below.

\begin{lemma} \label{lem:second-order-estimate-magnetisation}
    For each $k \in \N$ we have
    \begin{align*}
        \ddt \lnorm{\Delta \vec{m}_k}{2}^2 + \lnorm{\nabla \Delta \vec{m}_k}{2}^2 &\lsim J + J^3.
    \end{align*}
    \end{lemma}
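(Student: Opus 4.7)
The strategy is to test the Galerkin magnetisation equation \eqref{eqn:galerkin-magnetisation} with $\boldsymbol{\xi} = \Delta^2 \vec{m}_k$. This choice is admissible because $\{\boldsymbol{\xi}_i\}$ consists of eigenfunctions of the Neumann Laplacian, so $\Delta$ maps $\YB_k$ into itself, and moreover $\vec{m}_k$ and $\Delta\vec{m}_k$ both inherit the homogeneous Neumann condition on $\partial\Omega$. Integrating the two left-hand terms by parts twice, with every boundary contribution vanishing for this reason, I would obtain
\[
\frac{1}{2}\ddt \lnorm{\Delta \vec{m}_k}{2}^2 + \lnorm{\nabla \Delta \vec{m}_k}{2}^2 = \sum_{i=1}^{5} I_i,
\]
where each $I_i$ comes from one further integration by parts of a RHS term, giving $I_i = \pm\liprod{\nabla f_i}{\nabla\Delta\vec{m}_k}$ with $f_1 = (\vec{v}_k\cdot\nabla)\vec{m}_k$, $f_2 = |\nabla\vec{m}_k|^2\vec{m}_k$, $f_3 = \vec{m}_k\times\Delta\vec{m}_k$, $f_4 = \vec{m}_k\times\vec{B}_k$, and $f_5 = \vec{m}_k\times(\vec{m}_k\times\vec{B}_k)$. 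After bounding each $\lnorm{\nabla f_i}{2}$ in terms of powers of $J$, Young's inequality absorbs a small multiple of $\lnorm{\nabla\Delta\vec{m}_k}{2}^2$ into the left-hand side.

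For the convective term $f_1$ and the Zeeman terms $f_4, f_5$, differentiation produces only products involving at most one second-order derivative; I would estimate them using Hölder's inequality together with the embeddings $\HB^2(\Omega) \hookrightarrow \LB^{\infty}(\Omega)$ and $\HB^1(\Omega) \hookrightarrow \LB^6(\Omega)$, producing contributions of order $J + J^2$. The exchange-energy term $f_2$ is the source of the cubic power: differentiation yields the scalar factor $|\nabla\vec{m}_k|^3$, whose square gives $\lnorm{\nabla\vec{m}_k}{6}^6 \lsim \hnorm{\vec{m}_k}{2}^6 \leq J^3$, together with the mixed factor $|\vec{m}_k|\,|\nabla\vec{m}_k|\,|\nabla^2\vec{m}_k|$, which I would handle via Lemma \ref{lem:chen} and the Gagliardo-Nirenberg inequality \eqref{eqn:lem-carbou-eqn-3}.

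The main obstacle is the precession term $f_3 = \vec{m}_k\times\Delta\vec{m}_k$: expanding $\nabla f_3$ produces $\vec{m}_k\times\nabla\Delta\vec{m}_k$, and if this were paired naively with $\nabla\Delta\vec{m}_k$ the result would be $\lnorm{\vec{m}_k}{\infty}\lnorm{\nabla\Delta\vec{m}_k}{2}^2$, which cannot be absorbed into the left-hand side since the coefficient $\lnorm{\vec{m}_k}{\infty}\lsim\hnorm{\vec{m}_k}{2}$ is not small. The crucial observation is the pointwise identity $(\vec{a}\times\vec{b})\cdot\vec{b} = 0$, which gives
\[
\liprod{\vec{m}_k\times\nabla\Delta\vec{m}_k}{\nabla\Delta\vec{m}_k} = 0,
\]
eliminating the dangerous contribution. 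The surviving piece $\liprod{\nabla\vec{m}_k\times\Delta\vec{m}_k}{\nabla\Delta\vec{m}_k}$ is then controlled by $\lnorm{\nabla\vec{m}_k}{\infty}\lnorm{\Delta\vec{m}_k}{2}\lnorm{\nabla\Delta\vec{m}_k}{2}$ (using $\HB^3(\Omega)\hookrightarrow W^{1,\infty}(\Omega)$) and absorbed via Young. Collecting all contributions and using $J^2 \lsim J + J^3$ yields the claimed estimate.
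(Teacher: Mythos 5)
Your proposal is correct and follows essentially the same route as the paper: testing \eqref{eqn:galerkin-magnetisation} with $\boldsymbol{\xi} = \Delta^2 \vec{m}_k$ (admissible since the $\boldsymbol{\xi}_i$ are Neumann--Laplacian eigenfunctions), integrating by parts to reach the same five terms $\liprod{\nabla f_i}{\nabla \Delta \vec{m}_k}$, and estimating them with the same embeddings, with the cubic power coming from $|\nabla \vec{m}_k|^3$ exactly as in the paper. One small remark: the orthogonality cancellation for $\vec{m}_k \times \nabla\Delta\vec{m}_k$ is a clean device but not strictly necessary here, since $J$ already contains $\hnorm{\vec{m}_k}{3}^2$ and so even the naive bound $\lnorm{\vec{m}_k}{\infty}\lnorm{\nabla\Delta\vec{m}_k}{2}^2 \lsim J^{3/2}$ suffices, which is consistent with the paper's $|I_3| \lsim J^{3/2}$.
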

    \begin{proof}
    Setting $\boldsymbol{\xi} = \Delta^2 \vec{m}_k$ in (\ref{eqn:galerkin-magnetisation}) we see that
    \begin{align*}
        \frac{1}{2} \ddt \lnorm{\Delta \vec{m}_k}{2}^2 + \lnorm{\nabla \Delta \vec{m}_k}{2}^2 &= \liprod{\nabla \Big((\vec{v}_k \cdot \nabla)\vec{m}_k\Big)}{\nabla \Delta \vec{m}_k}  - \liprod{\nabla \Big(|\nabla \vec{m}_k|^2 \vec{m}_k\Big)}{\nabla \Delta \vec{m}_k} \\
        &\qquad - \liprod{\nabla\Big(\vec{m}_k \times \Delta \vec{m}_k\Big)}{\nabla \Delta \vec{m}_k} - \liprod{\nabla(\vec{m}_k \times \vec{B}_k\Big)}{\nabla \Delta \vec{m}_k} \\
        &\qquad - \liprod{\nabla \Big(\vec{m}_k \times (\vec{m}_k \times \vec{B}_k)\Big)}{\nabla \Delta \vec{m}_k} \\
        &\lsim |I_1| + \cdots + |I_5|,
    \end{align*}
    where each $I_i$ represents the corresponding inner product in the equation above. We produce these estimates by Young's inequality and the Sobolev embedding theorem. We estimate $I_1$ using the Sobolev embeddings $\HB^1(\Omega) \hookrightarrow \LB^6(\Omega) \hookrightarrow \LB^3(\Omega)$ and $\HB^2(\Omega) \hookrightarrow \LB^{\infty}(\Omega)$ to obtain
    \begin{align*}
        |I_1| &\lsim \lnorm{\nabla \vec{m}_k}{\infty} \lnorm{\nabla \vec{v}_k}{2} \lnorm{\nabla \Delta \vec{m}_k}{2} + \lnorm{\vec{v}_k}{\infty} \lnorm{\nabla^2 \vec{m}_k}{2} \lnorm{\nabla \Delta \vec{m}_k}{2} \\
        &\lsim \hnorm{\vec{m}_k}{3}^2 \hnorm{\vec{v}_k}{1} + \hnorm{\vec{v}_k}{2} \hnorm{\vec{m}_k}{3}^2 \\
        &\lsim J + J^2.
    \end{align*}
    For $I_2$, we use the same Sobolev embeddings so that
    \begin{align*}
        |I_2| &\lsim \lnorm{\vec{m}_k}{\infty} \lnorm{\nabla \vec{m}_k}{\infty} \lnorm{\nabla^2 \vec{m}_k}{2} \lnorm{\nabla \Delta \vec{m}_k}{2} \\
        &\qquad + \lnorm{\nabla \vec{m}_k}{\infty}^2 \lnorm{\nabla \vec{m}_k}{2} \lnorm{\nabla \Delta \vec{m}_k}{2} \\
        &\lsim \hnorm{\vec{m}_k}{3}^4 + \hnorm{\vec{m}_k}{3}^4 \\
        &\lsim J^2.
    \end{align*}
    The expansion of the product rule for the cross-product terms are fairly obvious; we follow the same strategy as before to produce
    \begin{equation*}
        |I_3| \lsim J^{3/2}, \qquad |I_4| \lsim J + J^2, \qquad |I_5| \lsim J + J^3.
    \end{equation*}
    The result is obtained by summing up the estimates.
\end{proof}

Due to the non-linearity of the equations in \eqref{eqn:fmhd-equiv} we will require stronger estimates on the first order time derivatives of $\vec{v}_k$, $\vec{B}_k$ and $\vec{m}_k$. We accomplish this by differentiating the Galerkin formulations \eqref{eqn:galerkin-velocity}--\eqref{eqn:galerkin-magnetisation} with respect to time. The next four lemmas are concerned with these terms.

\begin{lemma} \label{lem:time-derivative-estimate-velocity}
Let $\varepsilon > 0$. For each $k \in \N$ we have
\begin{align*}
    \ddt \lnorm{\partial_t \vec{v}_k}{2}^2 + \lnorm{\partial_t \nabla \vec{v}_k}{2}^2 &\lsim J + J^3 + \varepsilon \lnorm{\partial_t \nabla \vec{B}_k}{2}^2 + \varepsilon \lnorm{\partial_t \Delta \vec{m}_k}{2}^2.
\end{align*}
\end{lemma}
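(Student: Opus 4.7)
The strategy is to differentiate the velocity Galerkin equation \eqref{eqn:galerkin-velocity} with respect to $t$ and then test against $\partial_t \vec{v}_k$, which is an admissible test function because $\V_k$ is spanned by time-independent eigenfunctions. This produces the natural energy identity
\begin{equation*}
    \tfrac12 \ddt \lnorm{\partial_t \vec{v}_k}{2}^2 + \lnorm{\nabla \partial_t \vec{v}_k}{2}^2 = R_1 + R_2 + R_3 + R_4 + R_5 + R_6 + R_7,
\end{equation*}
where the $R_i$ correspond, respectively, to $\partial_t$ applied (via the product rule) to $-(\vec{v}_k\cdot\nabla)\vec{v}_k$ (two terms $R_1, R_2$), to $\curl\vec{B}_k\times\vec{B}_k$ (two terms $R_3, R_4$), to $(\vec{B}_k\cdot\nabla)\vec{m}_k$ (two terms $R_5, R_6$ after rewriting), and to $-(\nabla\vec{m}_k)^\top\Delta\vec{m}_k$ (two terms grouped into $R_7$). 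One standard simplification is that $\liprod{(\vec{v}_k\cdot\nabla)\partial_t\vec{v}_k}{\partial_t\vec{v}_k} = 0$ since $\div \vec{v}_k = 0$, so that piece of $R_1/R_2$ vanishes outright.

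The remaining terms fall into two qualitatively different classes. In the first class are terms that only involve first-order spatial derivatives of the time derivatives of $\vec{v}_k,\vec{B}_k,\vec{m}_k$ (or no spatial derivatives at all), and which can be controlled purely by $J$ using Hölder's inequality, the embedding $\HB^2(\Omega)\hookrightarrow \LB^\infty(\Omega)$, and Young's inequality. A representative estimate is
\begin{equation*}
    |\liprod{(\partial_t\vec{v}_k\cdot\nabla)\vec{v}_k}{\partial_t\vec{v}_k}| \lsim \lnorm{\partial_t\vec{v}_k}{4}^2 \lnorm{\nabla\vec{v}_k}{2} \lsim \lnorm{\partial_t\vec{v}_k}{2}^{1/2} \lnorm{\nabla\partial_t\vec{v}_k}{2}^{3/2} \lnorm{\nabla\vec{v}_k}{2},
\end{equation*}
by Gagliardo--Nirenberg, which Young's inequality splits into $\tfrac18 \lnorm{\nabla\partial_t\vec{v}_k}{2}^2 + C\lnorm{\partial_t\vec{v}_k}{2}^2 \lnorm{\nabla\vec{v}_k}{2}^4 \lsim \tfrac18 \lnorm{\nabla\partial_t\vec{v}_k}{2}^2 + C(J + J^3)$. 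The first piece is absorbed into the left-hand side, and similar arguments handle $\liprod{\curl\vec{B}_k\times\partial_t\vec{B}_k}{\partial_t\vec{v}_k}$, $\liprod{(\partial_t\vec{B}_k\cdot\nabla)\vec{m}_k}{\partial_t\vec{v}_k}$, $\liprod{(\vec{B}_k\cdot\nabla)\partial_t\vec{m}_k}{\partial_t\vec{v}_k}$ and $\liprod{(\nabla\partial_t\vec{m}_k)^\top \Delta\vec{m}_k}{\partial_t\vec{v}_k}$, all of which contain at worst a factor of the form $\hnorm{\partial_t(\cdot)}{1}$ or $\hnorm{\partial_t(\cdot)}{0}$ already dominated by $J$.

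The second class consists of exactly two delicate terms, $\liprod{(\partial_t\curl\vec{B}_k)\times\vec{B}_k}{\partial_t\vec{v}_k}$ and $\liprod{(\nabla\vec{m}_k)^\top \partial_t\Delta\vec{m}_k}{\partial_t\vec{v}_k}$, in which the highest-order time-derivative quantities $\partial_t\nabla\vec{B}_k$ and $\partial_t\Delta\vec{m}_k$ appear. These are not controllable by $J$ alone; instead, using $\HB^2(\Omega)\hookrightarrow \LB^\infty(\Omega)$ to move $\vec{B}_k$ and $\nabla\vec{m}_k$ into $L^\infty$, Hölder gives
\begin{equation*}
    |\liprod{\partial_t\curl\vec{B}_k\times\vec{B}_k}{\partial_t\vec{v}_k}| \lsim \lnorm{\vec{B}_k}{\infty}\lnorm{\partial_t\nabla\vec{B}_k}{2}\lnorm{\partial_t\vec{v}_k}{2},
\end{equation*}
and an analogous bound holds for the other term. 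Young's inequality with parameter $\varepsilon$ then produces $\varepsilon\lnorm{\partial_t\nabla\vec{B}_k}{2}^2$ and $\varepsilon\lnorm{\partial_t\Delta\vec{m}_k}{2}^2$ plus remainders of the form $C(\varepsilon)\hnorm{\vec{B}_k}{2}^2\lnorm{\partial_t\vec{v}_k}{2}^2 \lsim C(\varepsilon)(J+J^2)$. Collecting all contributions, absorbing the $\lnorm{\nabla\partial_t\vec{v}_k}{2}^2$ pieces into the left-hand side, and crudely bounding by $J+J^3$ yields the claimed inequality.

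The main obstacle is bookkeeping rather than mathematical depth: the $\varepsilon$-terms on the right must be the \emph{only} places where $\partial_t\nabla\vec{B}_k$ or $\partial_t\Delta\vec{m}_k$ appear, since later lemmas will absorb them using the coercive left-hand sides of the corresponding estimates for $\vec{B}_k$ and $\vec{m}_k$. Thus every other occurrence of a time derivative has to be cleanly dispatched into the $J+J^3$ bucket, and the Sobolev exponents in the Gagliardo--Nirenberg step must be chosen so that the resulting $\lnorm{\nabla\partial_t\vec{v}_k}{2}^2$ factor appears with power strictly less than $2$ so it can be absorbed.
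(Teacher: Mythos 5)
Your proposal is correct and takes essentially the same route as the paper's proof: differentiate the Galerkin velocity equation in time, test with $\partial_t \vec{v}_k$, use the divergence-free cancellation, estimate the lower-order terms via H\"older, Sobolev embedding, Gagliardo--Nirenberg and Young's inequality while absorbing the resulting $\lnorm{\partial_t \nabla \vec{v}_k}{2}^2$ pieces into the left-hand side, and isolate exactly the two terms carrying $\partial_t \nabla \vec{B}_k$ and $\partial_t \Delta \vec{m}_k$ via $\varepsilon$-weighted Young's inequality. The only deviations are cosmetic (e.g.\ an $L^2$--$L^4$--$L^4$ H\"older split with the Ladyzhenskaya-type interpolation instead of the paper's $L^2$--$L^3$--$L^6$ split for the convection term), and they change nothing of substance.
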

\begin{proof}
As each component of $\vec{F}$ in \eqref{eqn:galerkin-ode} is smooth, we conclude that $\vec{z} \in C^2(0,T^*)$ by recursion. Hence, differentiating (\ref{eqn:galerkin-velocity})--(\ref{eqn:galerkin-magnetisation}) with respect to time is permissible. Differentiating (\ref{eqn:galerkin-velocity}), we obtain
\begin{align} \label{eqn:diff-galerkin-velocity}
\begin{split}
    \liprod{\partial^2_t \vec{v}_k}{\boldsymbol{\varphi}} + \liprod{\partial_t \nabla \vec{v}_k}{\nabla \boldsymbol{\varphi}} &= -\liprod{(\partial_t\vec{v}_k \cdot \nabla)\vec{v}_k} {\boldsymbol{\varphi}} - \liprod{(\vec{v}_k \cdot \nabla)\partial_t\vec{v}_k} {\boldsymbol{\varphi}} + \liprod{\partial_t \curl \vec{B}_k \times \vec{B}_k}{\boldsymbol{\varphi}} \\
    &\qquad + \liprod{\curl \vec{B}_k \times \partial_t \vec{B}_k}{\boldsymbol{\varphi}} + \liprod{(\partial_t \vec{B}_k \cdot \nabla) \vec{m}_k}{\boldsymbol{\varphi}} + \liprod{(\vec{B}_k \cdot \nabla) \partial_t \vec{m}_k}{\boldsymbol{\varphi}} \\
    &\qquad - \liprod{(\partial_t \nabla \vec{m}_k)^{\top} \Delta \vec{m}_k}{\vec{\varphi}} - \liprod{(\nabla \vec{m}_k)^{\top} \partial_t \Delta \vec{m}_k}{\vec{\varphi}}.
\end{split}
\end{align}
Setting $\boldsymbol{\varphi} = \partial_t \vec{v}_k$ we obtain
\begin{align*}
    \frac{1}{2} \ddt \lnorm{\partial_t \vec{v}_k}{2} + \lnorm{\partial_t \nabla \vec{v}_k}{2}^2 &\lsim |I_1| + \cdots + |I_8|,
\end{align*}
where each $I_i$ represents the corresponding inner product on the right-hand side of (\ref{eqn:diff-galerkin-velocity}). We deal with the non-linear terms in a similar manner to Lemma \ref{lem:first-order-estimate-1}. First, we estimate $I_1$ by using Young's inequality, the Sobolev embedding $\HB^1(\Omega) \hookrightarrow \LB^6(\Omega)$ and the Gagliardo-Nirenberg inequality as follows
\begin{align*}
    |I_1| &\lsim \lnorm{\nabla \vec{v}_k}{2} \lnorm{\partial_t \vec{v}_k}{3} \lnorm{\partial_t \vec{v}_k}{6} \\
    &\lsim \lnorm{\nabla \vec{v}_k}{2} \lnorm{\partial_t \vec{v}_k}{2}^{1/2} \hnorm{\partial_t \vec{v}_k}{1}^{1/2} \hnorm{\partial_t \vec{v}_k}{1} \\
    &\lsim \hnorm{\vec{v}_k}{1}^6 + \lnorm{\partial_t \vec{v}_k}{2}^6 + \varepsilon \hnorm{\partial_t \vec{v}_k}{1}^2 \\
    &\lsim J + J^3 + \varepsilon \lnorm{\partial_t \nabla \vec{v}_k}{2}^2.
\end{align*}
It is noted that $I_2 = 0$ since $\vec{v}_k$ is divergence-free. For the inner product $I_3$, we use the Sobolev embedding $\HB^2(\Omega) \hookrightarrow \LB^{\infty}(\Omega)$ to obtain
\begin{align*}
    |I_3| &\lsim \lnorm{\vec{B}_k}{\infty}  \lnorm{\partial_t \nabla \vec{B}_k}{2} \lnorm{\partial_t \vec{v}_k}{2} \\
    &\lsim \hnorm{\vec{B}_k}{2}^4 + \lnorm{\partial_t \vec{v}_k}{2}^4 + \varepsilon \lnorm{\partial_t \nabla \vec{B}_k}{2}^2 \\
    &\lsim J^2 + \varepsilon \lnorm{\partial_t \nabla \vec{B}_k}{2}^2.
\end{align*}
Now, the estimate for $I_4$ follows in the same manner as $I_1$ so that
\begin{align*}
    |I_4| &\lsim J + J^3 + \varepsilon \lnorm{\partial_t \nabla \vec{B}_k}{2}^2 + \varepsilon \lnorm{\partial_t \nabla \vec{v}_k}{2}^2,
\end{align*}
while the estimates for $I_5$ and $I_6$ follow the procedure for $I_3$ to give
\begin{align*}
    |I_5| + |I_6| &\lsim J + J^2.
\end{align*}
Lastly, the inner product $I_7$ follows in the same manner as $I_1$, along with the additional embedding $\LB^6(\Omega) \hookrightarrow \LB^3(\Omega)$ to produce
\begin{align*}
    |I_7| &\lsim \lnorm{\partial_t \nabla \vec{m}_k}{2} \lnorm{\Delta \vec{m}_k}{3} \lnorm{\partial_t \vec{v}_k}{6} \\
    &\lsim \hnorm{\partial_t \vec{m}_k}{1} \hnorm{\vec{m}_k}{3} \lnorm{\partial_t \nabla \vec{v}_k}{2} \\
    &\lsim \hnorm{\partial_t \vec{m}_k}{1}^4 + \hnorm{\vec{m}_k}{3}^4 + \varepsilon \lnorm{\partial_t \nabla \vec{v}_k}{2}^2 \\
    &\lsim J^2 + \varepsilon \lnorm{\partial_t \nabla \vec{v}_k}{2}^2.
\end{align*}
We estimate for $I_8$ similarly to $I_3$ and obtain
\begin{align*}
    |I_8| &\lsim J^2 + \varepsilon \lnorm{\partial_t \Delta \vec{m}_k}{2}^2.
\end{align*}
The required result is obtained by summing up these estimates and absorbing the $\varepsilon \lnorm{\partial_t \nabla \vec{v}_k}{2}^2$ term into the left-hand side.
\end{proof}

\begin{lemma} \label{lem:time-derivative-estimate-magnetic-field}
Let $\varepsilon > 0$. For each $k \in \N$ we have
\begin{align*}
        \ddt \lnorm{\partial_t \vec{B}_k}{2}^2 + \lnorm{\partial_t \nabla \vec{B}_k}{2}^2 &\lsim J + J^3 + \varepsilon \lnorm{\partial_t \nabla \vec{v}_k}{2}^2.
\end{align*}
\end{lemma}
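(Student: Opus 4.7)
The plan is to mirror the strategy of Lemma \ref{lem:time-derivative-estimate-velocity} applied to the magnetic-field equation. First I would differentiate the Galerkin identity \eqref{eqn:galerkin-magnetic-field} in time (which is legal since $\vec{z} \in C^2$, as already noted in the proof of Lemma \ref{lem:time-derivative-estimate-velocity}) to obtain
\begin{equation*}
    \liprod{\partial_t^2 \vec{B}_k}{\boldsymbol{\omega}} + \liprod{\partial_t \curl \vec{B}_k}{\curl \boldsymbol{\omega}} = \liprod{\curl(\partial_t \vec{v}_k \times \vec{B}_k)}{\boldsymbol{\omega}} + \liprod{\curl(\vec{v}_k \times \partial_t \vec{B}_k)}{\boldsymbol{\omega}},
\end{equation*}
then test with $\boldsymbol{\omega} = \partial_t \vec{B}_k \in \XB_k$. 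The left-hand side becomes $\frac{1}{2}\ddt\lnorm{\partial_t\vec{B}_k}{2}^2 + \lnorm{\partial_t\curl\vec{B}_k}{2}^2$. Since the basis $\{\boldsymbol{\omega}_i\}$ lies in $\HB^1_{n,\div}(\Omega)$, so does $\partial_t\vec{B}_k$, and Lemma \ref{lem:curl-estimate} yields $\lnorm{\partial_t\nabla\vec{B}_k}{2}^2 \lsim \lnorm{\partial_t\curl\vec{B}_k}{2}^2$, which will provide the desired dissipative term.

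Next I would estimate the two right-hand side inner products, call them $I_1$ and $I_2$, using Lemma \ref{lem:curl-vxb}. For $I_1$, Lemma \ref{lem:curl-vxb} with $\vec{u}=\partial_t\vec{v}_k$, $\vec{v}=\vec{B}_k$, $\vec{w}=\partial_t\vec{B}_k$ gives two terms; the first is handled via $\HB^2\hookrightarrow\LB^\infty$ to extract $\lnorm{\vec{B}_k}{\infty}\lsim\hnorm{\vec{B}_k}{2}$, pairing $\lnorm{\partial_t\nabla\vec{v}_k}{2}$ with $\lnorm{\partial_t\vec{B}_k}{2}$ and splitting by Young's inequality to produce $J^2 + \varepsilon\lnorm{\partial_t\nabla\vec{v}_k}{2}^2$; the second applies $\HB^1\hookrightarrow\LB^6$ on $\partial_t\vec{v}_k$ and $\HB^2\hookrightarrow\W^{1,3}$ on $\vec{B}_k$, giving the same form after Young's. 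For $I_2$, taking $\vec{u}=\vec{v}_k$, $\vec{v}=\vec{w}=\partial_t\vec{B}_k$, the term with $|\nabla\vec{v}_k|$ calls for the Gagliardo--Nirenberg inequality $\lnorm{\partial_t\vec{B}_k}{4}^2 \lsim \lnorm{\partial_t\vec{B}_k}{2}^{1/2}\hnorm{\partial_t\vec{B}_k}{1}^{3/2}$ in $\R^3$, and after Young's inequality with a cubic split this produces a $J^3$ contribution plus an $\varepsilon$-portion of $\lnorm{\partial_t\nabla\vec{B}_k}{2}^2$; the remaining term with $|\nabla\partial_t\vec{B}_k|$ is bounded via $\lnorm{\vec{v}_k}{\infty}\lsim\hnorm{\vec{v}_k}{2}$ and Young's to again yield $J^2 + \varepsilon\lnorm{\partial_t\nabla\vec{B}_k}{2}^2$.

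Summing the four estimates one obtains a right-hand side of the form $J + J^3 + \varepsilon\lnorm{\partial_t\nabla\vec{v}_k}{2}^2 + \varepsilon\lnorm{\partial_t\nabla\vec{B}_k}{2}^2$, and (for $\varepsilon$ small enough relative to the constant from Lemma \ref{lem:curl-estimate}) the last term is absorbed into $\lnorm{\partial_t\curl\vec{B}_k}{2}^2$ on the left. The only slight delicacy is ensuring that the mixed term involving $\nabla\partial_t\vec{v}_k$ cannot be absorbed on the left here (since there is no dissipation in $\partial_t\vec{v}_k$ in this inequality), which is exactly why it must be carried as the $\varepsilon\lnorm{\partial_t\nabla\vec{v}_k}{2}^2$ remainder on the right; this coupling is then closed later by combining with Lemma \ref{lem:time-derivative-estimate-velocity} and choosing $\varepsilon$ sufficiently small. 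I expect no serious obstacle beyond correctly tracking the powers of $J$ that come out of the Gagliardo--Nirenberg step in $I_2$.
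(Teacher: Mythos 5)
Your proposal is correct and follows essentially the same route as the paper: differentiate \eqref{eqn:galerkin-magnetic-field} in time, test with $\partial_t \vec{B}_k$, split the right-hand side via Lemma \ref{lem:curl-vxb}, estimate with Sobolev embeddings, Gagliardo--Nirenberg and Young, then use Lemma \ref{lem:curl-estimate} to convert $\lnorm{\partial_t \curl \vec{B}_k}{2}$ into $\lnorm{\partial_t \nabla \vec{B}_k}{2}$ and absorb the $\varepsilon\lnorm{\partial_t \nabla \vec{B}_k}{2}^2$ term, carrying $\varepsilon\lnorm{\partial_t \nabla \vec{v}_k}{2}^2$ on the right. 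The only differences are inessential choices of Hölder/Gagliardo--Nirenberg exponents (e.g.\ $\LB^{\infty}$--$\LB^2$ versus $\LB^6$--$\LB^3$ splits), which lead to the same $J + J^3$ bound.
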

\begin{proof}
As with Lemma \ref{lem:time-derivative-estimate-velocity}, we differentiate (\ref{eqn:galerkin-magnetic-field}) with respect to time to obtain
\begin{align} \label{eqn:diff-galerkin-magnetic-field}
    \liprod{\partial^2_t \vec{B}_k}{\boldsymbol{\boldsymbol{\omega}}} + \liprod{\partial_t \curl \vec{B}_k}{\curl\boldsymbol{\omega}} &= \liprod{\curl(\partial_t \vec{v}_k \times \vec{B}_k)}{\boldsymbol{\omega}} + \liprod{\curl(\vec{v}_k \times \partial_t \vec{B}_k)}{\boldsymbol{\omega}}.
\end{align}
Setting $\boldsymbol{\omega} = \partial_t \vec{B}_k$ we have
\begin{align*}
    \frac{1}{2} \ddt \lnorm{\partial_t \vec{B}_k}{2}^2 + \lnorm{\partial_t \curl \vec{v}_k}{2}^2 &\lsim |I_1| + |I_2|,
\end{align*}
where each $I_i$ represents the corresponding inner product in (\ref{eqn:diff-galerkin-magnetic-field}). Using Lemma \ref{lem:curl-vxb} and then Young's inequality, the Gagliardo-Nirenberg inequality and the Sobolev embedding $\HB^1(\Omega) \hookrightarrow \LB^6(\Omega)$ in a similar manner to the proof of Lemma \ref{lem:time-derivative-estimate-velocity}, we have the following bounds:
\begin{align*}
    |I_1| &\lsim \lnorm{|\vec{B}_k| \, |\partial_t \nabla \vec{v}_k| \, |\partial_t \vec{B}_k|}{1} + \lnorm{|\partial_t \vec{v}_k| \, |\nabla \vec{B}_k| \, |\partial_t \vec{B}_k|}{1} \\
    &\lsim \hnorm{\vec{B}_k}{1}^6 + \lnorm{\partial_t \vec{B}_k}{2}^6 + \lnorm{\partial_t \vec{v}_k}{2}^2 + \varepsilon \hnorm{\partial_t \vec{B}_k}{1}^2 + \varepsilon \lnorm{\partial_t \nabla \vec{v}_k}{2}^2 \\
    &\lsim J + J^3 + \varepsilon \lnorm{\partial_t \nabla \vec{B}_k}{2}^2 + \varepsilon \lnorm{\partial_t \nabla \vec{v}_k}{2}^2, \\[2ex]
    |I_2| &\lsim \lnorm{|\partial_t \vec{B}_k| |\nabla \vec{v}_k| |\partial_t \vec{B}_k|}{1} + \lnorm{|\vec{v}_k| |\partial_t \nabla \vec{B}_k| |\partial_t \vec{B}_k|}{1} \\
    &\lsim \lnorm{\nabla \vec{v}_k}{3} \lnorm{\partial_t \vec{B}_k}{3}^2 + \lnorm{\vec{v}_k}{6} \lnorm{\partial_t \vec{B}_k}{3} \lnorm{\partial_t \nabla \vec{B}_k}{2} \\
    &\lsim \hnorm{\vec{v}_k}{1}^4 + \lnorm{\partial_t \vec{B}_k}{2}^4 + \varepsilon \hnorm{\partial_t \vec{B}_k}{1}^2 + \hnorm{\vec{v}_k}{1}^6 + \lnorm{\partial_t \vec{B}_k}{2}^6 \\
    &\lsim J + J^2 + J^3 + \varepsilon \lnorm{\partial_t \nabla \vec{B}_k}{2}^2 \\
    &\lsim J + J^3 + \varepsilon \lnorm{\partial_t \nabla \vec{B}_k}{2}^2,
\end{align*}
The result is proved by summing up these estimates, using Lemma \ref{lem:curl-estimate} and absorbing the term $\varepsilon \lnorm{\partial_t \nabla \vec{B}_k}{2}^2$ into the left-hand side.
\end{proof}

\begin{lemma} \label{lem:time-derivative-estimate-magnetisation-1}
Let $\varepsilon > 0$. For each $k \in \N$ we have
\begin{align*}
    \ddt \lnorm{\partial_t \vec{m}_k}{2}^2 + \lnorm{\partial_t \nabla \vec{m}_k}{2}^2 &\lsim J + J^3 + \varepsilon \lnorm{\partial_t \Delta \vec{m}_k}{2}^2.
\end{align*}
\end{lemma}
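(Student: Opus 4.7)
My plan mirrors the template of Lemmas \ref{lem:time-derivative-estimate-velocity} and \ref{lem:time-derivative-estimate-magnetic-field}. First I would differentiate the Galerkin equation (\ref{eqn:galerkin-magnetisation}) in time (legitimate because $\vec{z}(t)$ is smooth, as observed in Lemma \ref{lem:time-derivative-estimate-velocity}) and then test with $\boldsymbol{\xi} = \partial_t \vec{m}_k$. The left-hand side collapses to
\[
    \frac{1}{2} \ddt \lnorm{\partial_t \vec{m}_k}{2}^2 + \lnorm{\partial_t \nabla \vec{m}_k}{2}^2,
\]
which matches the quantities that appear in the claim. The right-hand side becomes a sum of roughly ten inner products $I_1, \ldots, I_n$ produced by applying the product rule to each of the five nonlinear terms.

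Before estimating, I would collect the algebraic cancellations. The convective contribution $\liprod{(\vec{v}_k \cdot \nabla) \partial_t \vec{m}_k}{\partial_t \vec{m}_k}$ vanishes by $\div \vec{v}_k = 0$, and the two cross-product terms $\liprod{\partial_t \vec{m}_k \times \Delta \vec{m}_k}{\partial_t \vec{m}_k}$ and $\liprod{\partial_t \vec{m}_k \times \vec{B}_k}{\partial_t \vec{m}_k}$ vanish by the identity $(\vec{a} \times \vec{b}) \cdot \vec{a} = 0$; the same simplification trims the derivative of the triple cross product $\vec{m}_k \times (\vec{m}_k \times \vec{B}_k)$. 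The surviving terms would be bounded exactly as in the preceding two lemmas: Hölder's inequality together with the Sobolev embeddings $\HB^2(\Omega) \hookrightarrow \LB^\infty(\Omega)$ and $\HB^1(\Omega) \hookrightarrow \LB^6(\Omega) \hookrightarrow \LB^3(\Omega)$, the Gagliardo-Nirenberg interpolation from Lemma \ref{lem:chen}, and Young's inequality, with any residual $\varepsilon \lnorm{\partial_t \nabla \vec{m}_k}{2}^2$ absorbed into the left-hand side. A representative example is $\left|\liprod{(\partial_t \vec{v}_k \cdot \nabla) \vec{m}_k}{\partial_t \vec{m}_k}\right| \lsim \lnorm{\partial_t \vec{v}_k}{2} \lnorm{\nabla \vec{m}_k}{\infty} \lnorm{\partial_t \vec{m}_k}{2} \lsim J + J^2$, while the terms spawned by $|\nabla \vec{m}_k|^2 \vec{m}_k$ contribute at most $J^3$ through the embedding $\HB^3(\Omega) \hookrightarrow \LB^\infty(\Omega)$ applied to $\nabla \vec{m}_k$.

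The main obstacle is the single high-order term $I^\ast := \liprod{\vec{m}_k \times \partial_t \Delta \vec{m}_k}{\partial_t \vec{m}_k}$, which is the only place $\partial_t \Delta \vec{m}_k$ appears and which cannot be cancelled or converted by integration by parts without either introducing uncontrolled boundary data or reintroducing the same quantity. I would invoke the scalar triple product identity to rewrite
\[
    I^\ast = \liprod{\partial_t \Delta \vec{m}_k}{\partial_t \vec{m}_k \times \vec{m}_k},
\]
and then Young's inequality together with $\HB^2(\Omega) \hookrightarrow \LB^\infty(\Omega)$ delivers
\[
    |I^\ast| \leq \varepsilon \lnorm{\partial_t \Delta \vec{m}_k}{2}^2 + C_\varepsilon \lnorm{\vec{m}_k}{\infty}^2 \lnorm{\partial_t \vec{m}_k}{2}^2 \lsim \varepsilon \lnorm{\partial_t \Delta \vec{m}_k}{2}^2 + J^2.
\]
This is precisely the mechanism forcing the $\varepsilon \lnorm{\partial_t \Delta \vec{m}_k}{2}^2$ term into the right-hand side of the statement, and explains why this lemma must be paired with an independent estimate for $\lnorm{\partial_t \Delta \vec{m}_k}{2}^2$. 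Summing the estimates for $I_1, \ldots, I_n$ then yields the claimed inequality.
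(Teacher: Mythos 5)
Your proposal is correct and follows essentially the same route as the paper: differentiate \eqref{eqn:galerkin-magnetisation} in time, test with $\partial_t \vec{m}_k$, exploit the divergence-free and cross-product cancellations (the paper's $I_2 = I_5 = I_7 = I_9 = 0$), bound the remaining terms by Hölder, Sobolev embeddings and Young to get $J + J^3$, and isolate the single $\partial_t \Delta \vec{m}_k$ term (the paper's $I_6$) via Young's inequality, which is exactly how the $\varepsilon \lnorm{\partial_t \Delta \vec{m}_k}{2}^2$ term arises there as well.
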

\begin{proof}
Once again, we differentiate (\ref{eqn:galerkin-magnetisation}) with respect to time to obtain
\begin{align} \label{eqn:diff-galerkin-magnetisation}
\begin{split}
    \liprod{\partial_t^2 \vec{m}_k}{\boldsymbol{\xi}} + \liprod{\partial_t \nabla \vec{m}_k}{\nabla \boldsymbol{\xi}} &= -\liprod{(\partial_t \vec{v}_k \cdot \nabla)\vec{m}_k}{\boldsymbol{\xi}} -\liprod{(\vec{v}_k \cdot \nabla)\partial_t \vec{m}_k}{\boldsymbol{\xi}} + 2\liprod{(\partial_t \nabla \vec{m}_k \cdot \nabla \vec{m}_k) \vec{m}_k}{\boldsymbol{\xi}}\\
    &\qquad + \liprod{|\nabla \vec{m}_k|^2 \partial_t \vec{m}_k}{\boldsymbol{\xi}} + \liprod{\partial_t \vec{m}_k \times \Delta \vec{m}_k}{\boldsymbol{\xi}} + \liprod{\vec{m}_k \times \partial_t \Delta \vec{m}_k}{\boldsymbol{\xi}} \\
    &\qquad + \liprod{\partial_t \vec{m}_k \times \vec{B}_k}{\boldsymbol{\xi}} + \liprod{\vec{m}_k \times \partial_t \vec{B}_k}{\boldsymbol{\xi}}  + \liprod{\partial_t \vec{m}_k \times (\vec{m}_k \times \vec{B}_k)}{\boldsymbol{\xi}} \\
    &\qquad + \liprod{\vec{m}_k \times (\partial_t \vec{m}_k \times \vec{B}_k)}{\boldsymbol{\xi}} + \liprod{\vec{m}_k \times (\vec{m}_k \times \partial_t \vec{B}_k)}{\boldsymbol{\xi}}. 
\end{split}
\end{align}
Setting $\boldsymbol{\xi} = \partial_t \vec{m}_k$ in \eqref{eqn:diff-galerkin-magnetisation} we obtain
\begin{align*}
    \frac{1}{2} \ddt \lnorm{\partial_t \vec{m}_k}{2} + \lnorm{\partial_t \nabla \vec{m}_k}{2}^2 &\lsim |I_1| \cdots + |I_{11}|,
\end{align*}
where each $I_i$ represents the corresponding inner product in (\ref{eqn:diff-galerkin-magnetisation}). To produce these estimates using Young's inequality and the Sobolev embedding theorem. For the inner product $I_1$, we use the Sobolev embedding $\HB^2(\Omega) \hookrightarrow \LB^{\infty}(\Omega)$ to obtain
\begin{align*}
    |I_1| &\lsim \lnorm{\nabla \vec{m}_k}{\infty} \lnorm{\partial_t \vec{v}_k}{2} \lnorm{\partial_t \vec{m}_k}{2} \\
    &\lsim \hnorm{\vec{m}_k}{3}^4 + \lnorm{\partial_t \vec{v}_k}{2}^4 + \lnorm{\partial_t \vec{m}_k}{2}^2 \\
    &\lsim J + J^2.
\end{align*}
The inner product $I_2 = 0$ because $\partial_t \vec{v}_k$ is divergence-free. Similarly, $I_5 = I_7 = I_9 = 0$ due to the orthogonality property of the cross product. The estimates for $I_3$ and $I_4$ follow similarly to $I_1$ so that
\begin{align*}
    |I_3| + |I_4| &\lsim J^2,
\end{align*}
The same is true of $I_6$, however, we need to ensure that we isolate term $\partial_t \Delta \vec{m}_k$ to obtain
\begin{align*}
    |I_6| &\lsim J^2 + \varepsilon \lnorm{\partial_t \Delta \vec{m}_k}{2}^2.
\end{align*}
The remaining estimates also follow similarly to $I_1$ to produce
\begin{equation*}
    |I_8| \lsim J + J^2, \qquad |I_{10}| \lsim J^2, \qquad |I_{11}| \lsim J + J^3.
\end{equation*}
The result is proved by summing up these estimates.
\end{proof}

Due to the nature of the non-linearity in \eqref{eqn:fmhd-equiv e} we also require an estimate on the $\HB^1$-norm of the time derivative of $\vec{m}_k$.

\begin{lemma} \label{lem:time-derivative-estimate-magnetisation-2}
Let $\varepsilon > 0$. For each $k \in \N$ we have
\begin{align*}
    \ddt \lnorm{\partial_t \nabla \vec{m}_k}{2}^2 + \lnorm{\partial_t \Delta \vec{m}_k}{2}^2 &\lsim J^2 + J^3.
\end{align*}
\end{lemma}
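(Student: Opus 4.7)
My plan is to mirror the proof of Lemma \ref{lem:time-derivative-estimate-magnetisation-1}: start from the time-differentiated Galerkin equation \eqref{eqn:diff-galerkin-magnetisation}, but now test against $\boldsymbol{\xi} = -\partial_t \Delta \vec{m}_k$ instead of $\partial_t \vec{m}_k$. This is admissible because each $\boldsymbol{\xi}_i$ is a Laplacian eigenfunction with homogeneous Neumann data, so the Laplacian preserves $\YB_k$ and the boundary terms in the two integrations by parts on the left vanish. The term $\liprod{\partial_t^2 \vec{m}_k}{-\partial_t \Delta \vec{m}_k}$ produces $\tfrac{1}{2} \ddt \lnorm{\partial_t \nabla \vec{m}_k}{2}^2$, and $\liprod{\partial_t \nabla \vec{m}_k}{-\nabla \partial_t \Delta \vec{m}_k}$ produces $\lnorm{\partial_t \Delta \vec{m}_k}{2}^2$, exactly matching the left-hand side of the claimed estimate.

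I would then write the right-hand side as $\sum_{j=1}^{11} |I_j|$ corresponding to the inner products of \eqref{eqn:diff-galerkin-magnetisation} paired with $-\partial_t \Delta \vec{m}_k$. A key observation is that $I_6 = \liprod{\vec{m}_k \times \partial_t \Delta \vec{m}_k}{-\partial_t \Delta \vec{m}_k}$ vanishes by perpendicularity of the cross product; were it nonzero it would produce an uncontrollable copy of $\lnorm{\partial_t \Delta \vec{m}_k}{2}^2$ on the right-hand side. Each of the remaining ten terms I would handle by Hölder's inequality, the Sobolev embeddings $\HB^2(\Omega) \hookrightarrow \LB^{\infty}(\Omega)$ and $\HB^1(\Omega) \hookrightarrow \LB^6(\Omega)$, and a final application of Young's inequality, exactly as in Lemmas \ref{lem:first-order-estimate-1}--\ref{lem:time-derivative-estimate-magnetisation-1}. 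The strategy for each estimate is to place the factor $\partial_t \Delta \vec{m}_k$ in $\LB^2$, group the remaining factors so their product is controlled by a power of $J$ via the Sobolev embeddings, and then use Young's inequality to move an $\varepsilon \lnorm{\partial_t \Delta \vec{m}_k}{2}^2$ onto the left, leaving only $J^2 + J^3$ on the right.

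The main obstacle is $I_3 = 2\liprod{(\partial_t \nabla \vec{m}_k \cdot \nabla \vec{m}_k) \vec{m}_k}{-\partial_t \Delta \vec{m}_k}$, where $\partial_t \nabla \vec{m}_k$ is at the same order in $\partial_t \vec{m}_k$ as the quantity being controlled on the left. The resolution is to place $\partial_t \nabla \vec{m}_k$ in $\LB^2$ (controlled by $\hnorm{\partial_t \vec{m}_k}{1}$, hence by $J^{1/2}$) and both $\nabla \vec{m}_k$ and $\vec{m}_k$ in $\LB^{\infty}$ via $\hnorm{\vec{m}_k}{3}$ (together contributing $J$), yielding $|I_3| \lsim J^{3/2} \lnorm{\partial_t \Delta \vec{m}_k}{2} \lsim J^3 + \varepsilon \lnorm{\partial_t \Delta \vec{m}_k}{2}^2$, which is safely absorbed. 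The cubic nonlinearities $I_4, I_{10}, I_{11}$ similarly contribute the dominant $J^3$, while the bilinear terms $I_1, I_2, I_5, I_7, I_8, I_9$ contribute $J^2$; no linear-in-$J$ contribution survives because every term already carries either a first-time derivative or a nonlinear coefficient, forcing a minimum scaling of $J$ before pairing with $\lnorm{\partial_t \Delta \vec{m}_k}{2}$. Summing the eleven estimates and choosing $\varepsilon$ small enough to absorb the $\varepsilon \lnorm{\partial_t \Delta \vec{m}_k}{2}^2$ into the dissipation completes the argument.
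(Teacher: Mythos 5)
Your proposal is correct and follows essentially the same route as the paper: test the time-differentiated Galerkin equation \eqref{eqn:diff-galerkin-magnetisation} with $-\partial_t \Delta \vec{m}_k$ (admissible since the Laplacian preserves $\YB_k$), note $I_6=0$ by orthogonality of the cross product, and bound the remaining terms via H\"older, the embeddings $\HB^2(\Omega) \hookrightarrow \LB^{\infty}(\Omega)$ and $\HB^1(\Omega) \hookrightarrow \LB^6(\Omega)$, and Young's inequality, absorbing $\varepsilon \lnorm{\partial_t \Delta \vec{m}_k}{2}^2$ into the left-hand side. The only quibble is bookkeeping: since the Galerkin approximation does not satisfy $|\vec{m}_k|=1$, the term $I_9$ carries three solution factors and yields $J^{3/2}\lnorm{\partial_t \Delta \vec{m}_k}{2} \lsim J^3 + \varepsilon \lnorm{\partial_t \Delta \vec{m}_k}{2}^2$ rather than $J^2$, exactly as the paper groups it, which is harmless for the claimed bound $J^2+J^3$.
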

\begin{proof}
Setting $\boldsymbol{\xi} = -\partial_t \Delta \vec{m}_k$ in (\ref{eqn:diff-galerkin-magnetisation}), we obtain
\begin{align*}
    \frac{1}{2} \ddt \lnorm{\partial_t \nabla \vec{m}_k}{2}^2 + \lnorm{\partial_t \Delta \vec{m}_k}{2}^2 &\lsim |I_1| + \cdots + |I_{11}|,
\end{align*}
where each $I_i$ represents the corresponding inner product in (\ref{eqn:diff-galerkin-magnetisation}). To produce those estimates, we used Young's inequality and the Sobolev embedding theorem. We estimate $I_1$ using the Sobolev embedding $\HB^2(\Omega) \hookrightarrow \LB^{\infty}(\Omega)$ to obtain
\begin{align*}
    |I_1| &\lsim \lnorm{\nabla \vec{m}_k}{\infty} \lnorm{\partial_t \vec{v}_k}{2} \lnorm{\partial_t \Delta \vec{m}_k}{2} \\
    &\lsim J^2 + \varepsilon \lnorm{\partial_t \Delta \vec{m}_k}{2}^2.
\end{align*}
The estimates for $I_2$, $I_3$ and $I_4$ follow similarly to produce
\begin{align*}
    |I_2| &\lsim J^2 + \varepsilon \lnorm{\partial_t \Delta \vec{m}_k}{2}^2, \\
    |I_3| + |I_4| &\lsim J^3 + \varepsilon \lnorm{\partial_t \Delta \vec{m}_k}{2}^2.
\end{align*}
For $I_5$, we use the additional Sobolev embedding $\HB^1(\Omega) \hookrightarrow \LB^6(\Omega) \hookrightarrow \LB^3(\Omega)$ so that 
\begin{align*}
    |I_5| &\lsim \lnorm{\partial_t \vec{m}_k}{6} \lnorm{\Delta \vec{m}_k}{3} \lnorm{\partial_t \Delta \vec{m}_k}{2} \\
    &\lsim \hnorm{\partial_t \vec{m}_k}{1} \hnorm{\vec{m}_k}{3} \lnorm{\partial_t \Delta \vec{m}_k}{2} \\
    &\lsim \hnorm{\partial_t \vec{m}_k}{1}^4 + \hnorm{\vec{m}_k}{3}^4 + \varepsilon \lnorm{\partial_t \Delta \vec{m}_k}{2}^2 \\
    &\lsim J^2 + \varepsilon \lnorm{\partial_t \Delta \vec{m}_k}{2}^2.
\end{align*}
Due to the orthogonality property of the cross-product, $I_6 = 0$. The next two estimates follow in the same manner as $I_1$ so that we obtain
\begin{align*}
    |I_7| + |I_8| &\lsim J^2 + \varepsilon \lnorm{\partial_t \Delta \vec{m}_k}{2}^2.
\end{align*}
The remaining terms are very similar to $I_7$ and $I_8$ and thus follow in the same manner as $I_1$ as well, giving
\begin{align*}
    |I_9| + |I_{10}| + |I_{11}| &\lsim J^3 + \varepsilon \lnorm{\partial_t \Delta \vec{m}_k}{2}^2.
\end{align*}
Summing up these estimates and absorbing the term $\varepsilon \lnorm{\partial_t \Delta \vec{m}_k}{2}^2$ into the left-hand side gives us the result.
\end{proof}

Throughout Lemmas \ref{lem:first-order-estimate-1}--\ref{lem:time-derivative-estimate-magnetisation-2} we have made liberal use of $J(t)$ which contains $\hnorm{\vec{v}_k}{2}$, $\hnorm{\vec{B}_k}{2}$ and $\hnorm{\vec{m}_k}{3}$. We do not establish estimates for these using the same strategy as before, but rather we pursue a strategy that makes use of the information encoded in the time derivatives. This was a technique used in \cite{chen2011-article} to achieve higher regularity for the solutions. These estimates are summarised in the following lemma.

\begin{lemma} \label{lem:second-order-estimates-velocity-magnetic-field}
For each $k \in \N$, the Galerkin solutions $\vec{v}_k$, $\vec{B}_k$ and $\vec{m}_k$ are bounded such that:
\begin{align*}
    \hnorm{\vec{v}_k}{2}^2 + \hnorm{\vec{B}_k}{2}^2 + \hnorm{\vec{m}_k}{3}^2 &\lsim \lnorm{\partial_t \vec{v}_k}{2}^2 + \hnorm{\vec{v}_k}{1}^2 + \hnorm{\vec{v}_k}{1}^6  \\
    &\qquad + \lnorm{\partial_t \vec{B}_k}{2}^2 + \hnorm{\vec{B}_k}{1}^2 + \hnorm{\vec{B}_k}{1}^6  \\
    &\qquad + \hnorm{\partial_t \vec{m}_k}{1}^2 + \hnorm{\vec{m}_k}{2}^2 + \hnorm{\vec{m}_k}{2}^{10}.
\end{align*}
\end{lemma}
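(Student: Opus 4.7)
The plan is to view each Galerkin equation \eqref{eqn:galerkin-velocity}--\eqref{eqn:galerkin-magnetisation} as an elliptic problem in space, with the time derivatives moved to the right-hand side and treated as known data. Elliptic regularity for the Stokes operator (for $\vec{v}_k$), for $\curl^2$ via Lemma \ref{lem:curl-estimate} (for $\vec{B}_k$), and for $-\Delta$ via Lemma \ref{lem:carbou} (for $\vec{m}_k$) will then upgrade the spatial regularity by two orders. The key point that makes this work within the Galerkin framework is that all the test functions needed below---namely $A\vec{v}_k$, $\curl^2 \vec{B}_k$, $-\Delta \vec{m}_k$, and $\Delta^2 \vec{m}_k$---lie in the corresponding subspaces $\V_k$, $\XB_k$, $\YB_k$, since these are spanned by eigenfunctions of precisely those operators.

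For $\vec{v}_k$, I would substitute $\boldsymbol{\varphi} = A\vec{v}_k \in \V_k$ into \eqref{eqn:galerkin-velocity}; integration by parts together with $\PP A\vec{v}_k = A\vec{v}_k$ gives $\liprod{\nabla \vec{v}_k}{\nabla A\vec{v}_k} = \lnorm{A\vec{v}_k}{2}^2$, and Cauchy--Schwarz followed by Young's inequality (with the embedding $\HB^2(\Omega) \hookrightarrow \LB^\infty(\Omega)$ on the nonlinear source terms exactly as in Lemma \ref{lem:first-order-estimate-2}) produces a bound of the form $\lnorm{A\vec{v}_k}{2}^2 \lsim \lnorm{\partial_t \vec{v}_k}{2}^2 + \hnorm{\vec{v}_k}{1}^6 + \hnorm{\vec{B}_k}{1}^6 + \hnorm{\vec{m}_k}{2}^{10}$; Stokes elliptic regularity then yields $\hnorm{\vec{v}_k}{2}^2 \lsim \lnorm{A\vec{v}_k}{2}^2 + \lnorm{\vec{v}_k}{2}^2$. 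The argument for $\vec{B}_k$ is analogous, using $\boldsymbol{\omega} = \curl^2 \vec{B}_k \in \XB_k$ in \eqref{eqn:galerkin-magnetic-field}---the boundary terms from \eqref{eqn:curl-integration-by-parts} vanish since $\curl \boldsymbol{\omega}_i \times \vec{n} = \vec{0}$---combined with Lemma \ref{lem:curl-vxb} on the nonlinearity and Lemma \ref{lem:curl-estimate} to pass from $\lnorm{\curl^2 \vec{B}_k}{2}$ to $\hnorm{\vec{B}_k}{2}$.

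For $\vec{m}_k$ I would isolate $-\Delta \vec{m}_k$ in \eqref{eqn:galerkin-magnetisation}, writing
\[
-\Delta \vec{m}_k - \vec{m}_k \times \Delta \vec{m}_k = G_k,
\]
where $G_k$ collects the terms $-\partial_t \vec{m}_k - (\vec{v}_k\cdot\nabla)\vec{m}_k + |\nabla \vec{m}_k|^2 \vec{m}_k + \vec{m}_k \times \vec{B}_k - \vec{m}_k \times (\vec{m}_k \times \vec{B}_k)$. Taking the $\LB^2$-pairing of both sides with $-\Delta \vec{m}_k$ annihilates the cross-product contribution by orthogonality, $(\vec{m}_k \times \Delta \vec{m}_k)\cdot \Delta \vec{m}_k = 0$, which gives $\lnorm{\Delta \vec{m}_k}{2} \leq \lnorm{G_k}{2}$; Lemma \ref{lem:carbou}, equation \eqref{eqn:lem-carbou-eqn-1}, then lifts this to $\hnorm{\vec{m}_k}{2}$. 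To reach $\hnorm{\vec{m}_k}{3}$, I would differentiate the isolated equation in each $x_i$ and repeat the orthogonality trick: $\partial_i(\vec{m}_k \times \Delta \vec{m}_k) = \partial_i \vec{m}_k \times \Delta \vec{m}_k + \vec{m}_k \times \partial_i \Delta \vec{m}_k$, and the second summand remains orthogonal to $\partial_i \Delta \vec{m}_k$. Summing over $i$ yields $\lnorm{\nabla \Delta \vec{m}_k}{2} \lsim \lnorm{\nabla G_k}{2} + \lnorm{\,|\nabla \vec{m}_k|\,|\Delta \vec{m}_k|\,}{2}$, after which Lemma \ref{lem:carbou}, equation \eqref{eqn:lem-carbou-eqn-2}, closes the bound on $\hnorm{\vec{m}_k}{3}$.

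The main obstacle will be controlling the terms inside $\lnorm{\nabla G_k}{2}$---in particular contributions such as $\nabla(|\nabla \vec{m}_k|^2 \vec{m}_k)$ and $\nabla((\vec{v}_k \cdot \nabla)\vec{m}_k)$---which, if handled by naive Hölder estimates, would produce factors of $\hnorm{\vec{m}_k}{3}$ (the very quantity being estimated) or of $\hnorm{\vec{v}_k}{2}$. The remedy is the interpolation inequality \eqref{eqn:lem-carbou-eqn-3} of Lemma \ref{lem:carbou}, which trades $\lnorm{\nabla^2 \vec{m}_k}{3}$ for $\hnorm{\vec{m}_k}{2} + \hnorm{\vec{m}_k}{2}^{1/2}\lnorm{\nabla \Delta \vec{m}_k}{2}^{1/2}$; combined with Lemma \ref{lem:chen} and Young's inequality, each such dangerous term splits into an absorbable $\varepsilon \lnorm{\nabla \Delta \vec{m}_k}{2}^2$ and a high power of $\hnorm{\vec{m}_k}{2}$ (which is exactly the mechanism producing the $\hnorm{\vec{m}_k}{2}^{10}$ in the statement). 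Summing the three estimates for $\vec{v}_k$, $\vec{B}_k$, $\vec{m}_k$ and absorbing the small-$\varepsilon$ terms then delivers the claimed inequality.
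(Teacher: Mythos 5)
Your overall architecture is the same as the paper's: test \eqref{eqn:galerkin-velocity} with $A\vec{v}_k\in\V_k$, \eqref{eqn:galerkin-magnetic-field} with $\curl^2\vec{B}_k\in\XB_k$, use the eigenfunction structure so these are admissible, and recover the full norms through Stokes regularity, Lemma \ref{lem:curl-estimate} and Lemma \ref{lem:carbou}. However, the way you propose to close the velocity and induction estimates would fail as stated. If the quadratic sources are handled ``with the embedding $\HB^2(\Omega)\hookrightarrow\LB^{\infty}(\Omega)$ exactly as in Lemma \ref{lem:first-order-estimate-2}'', you get, e.g., $\lnorm{\curl\vec{B}_k\times\vec{B}_k}{2}^2\lsim\lnorm{\vec{B}_k}{\infty}^2\lnorm{\curl\vec{B}_k}{2}^2\lsim\hnorm{\vec{B}_k}{2}^2\hnorm{\vec{B}_k}{1}^2$, in which the $\HB^2$-norm appears \emph{squared} multiplied by a quantity that is not small; Young's inequality can only trade this for $\varepsilon\hnorm{\vec{B}_k}{2}^4$, which cannot be absorbed into $\lnorm{\curl^2\vec{B}_k}{2}^2$. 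That bookkeeping is fine in Lemmas \ref{lem:first-order-estimate-1}--\ref{lem:first-order-estimate-2}, where one only needs $J+J^2$ for a Gronwall argument, but here the inequality is algebraic and the $\HB^2$-norms must enter each product with power strictly less than $2$. The paper therefore estimates every such term with Lemma \ref{lem:chen}, giving $\hnorm{f}{1}^2\hnorm{g}{1}\hnorm{g}{2}$ after squaring, so that Young yields $C\hnorm{\cdot}{1}^6+\varepsilon\hnorm{\cdot}{2}^2$, and the $\varepsilon$-terms are absorbed after invoking $\lnorm{\nabla^2\vec{v}_k}{2}\lsim\lnorm{A\vec{v}_k}{2}$ and Lemma \ref{lem:curl-estimate}. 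You deploy exactly this interpolation mechanism for the magnetisation terms, so the repair is at hand, but as written the velocity/magnetic-field step does not produce the claimed bound.

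There is a second, more structural issue in your magnetisation argument: you isolate $-\Delta\vec{m}_k-\vec{m}_k\times\Delta\vec{m}_k=G_k$ and then differentiate this identity in $x_i$ and pair with $\partial_i\Delta\vec{m}_k$. The Galerkin solution does not satisfy the strong equation: the residual of \eqref{eqn:galerkin-magnetisation} is only orthogonal to $\YB_k$, so the isolated identity holds only after projection onto $\YB_k$, and $\partial_i\Delta\vec{m}_k$ is not an admissible test function, so this differentiation step is not justified at the approximate level. The legitimate route---which you yourself list among the admissible choices---is to stay in the weak formulation and take $\boldsymbol{\xi}=\Delta^2\vec{m}_k\in\YB_k$, then integrate by parts (the boundary contributions vanish because $\partial_{\vec{n}}\Delta\vec{m}_k=\vec{0}$ for combinations of the Neumann eigenfunctions); this yields precisely the identity $\lnorm{\nabla\Delta\vec{m}_k}{2}^2$ paired against gradients of the remaining terms, which is the paper's argument. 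With that substitution, your handling of the dangerous terms via \eqref{eqn:lem-carbou-eqn-3}, Lemma \ref{lem:chen} and Young's inequality (producing the $\hnorm{\vec{m}_k}{2}^{10}$ power and absorbable $\varepsilon\lnorm{\nabla\Delta\vec{m}_k}{2}^2$ contributions) agrees with the paper's proof.
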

\begin{proof}
Rearrange (\ref{eqn:galerkin-velocity}) such that
\begin{equation} \label{eqn:lem-second-order-estimates-velocity-magnetic-field-rearranged-galerkin-velocity}
    \begin{split}
        \liprod{-\Delta \vec{v}_k}{\boldsymbol{\varphi}} &= -\liprod{\partial_t \vec{v}_k}{\boldsymbol{\varphi}}-\liprod{(\vec{v}_k \cdot \nabla)\vec{v}_k}{\boldsymbol{\varphi}} + \liprod{\curl \vec{B}_k \times \vec{B}_k}{\boldsymbol{\varphi}} + \liprod{(\vec{B}_k \cdot \nabla) \vec{m}_k}{\boldsymbol{\varphi}} \\
        &\qquad - \liprod{(\nabla \vec{m}_k)^{\top} \Delta \vec{m}_k}{\vec{\varphi}}.
    \end{split}
\end{equation}
We make the observation that the eigenfunctions of the Stokes problem \eqref{stokes-problem} $\{\boldsymbol{\varphi}_i\}_{i=1}^k$ are also eigenfunctions of the Stokes operator $A$, see \eqref{eqn:stokes-operator}, since
\begin{equation*}
    A \vec{\varphi}_i = -\PP \Delta \vec{\varphi_i} = -\PP(-\lambda_i \vec{\varphi} + \nabla p_i) = \lambda_i \vec{\varphi}_i,
\end{equation*}
noting that the Helmholtz--Leray decomposition is unique (up to some constant for $p_i$). Consequently, we have (noting that $A$ is linear)
\begin{align*}
    -\Delta \vec{v}_k &= \sum_{i=1}^k a_i(t) (-\Delta \boldsymbol{\varphi}_i) = \sum_{i=1}^k a_i(t) (\lambda_i \boldsymbol{\varphi}_i - \nabla p_i) = A \vec{v}_k  - \sum_{i=1}^k a_i(t) \nabla p_i,
\end{align*}
and so by taking the inner product of both sides with $A\vec{v}_k$ and using the fact that $\div(A \vec{v}_k) = 0$, we obtain
\begin{equation*}
     \liprod{-\Delta \vec{v}_k}{A \vec{v}_k} = \lnorm{A \vec{v}_k}{2}^2.
\end{equation*}
Thus, by setting $\boldsymbol{\varphi} = A \vec{v}_k$ in \eqref{eqn:lem-second-order-estimates-velocity-magnetic-field-rearranged-galerkin-velocity}, using Hölder's inequality, Young's inequality, and Lemma \ref{lem:chen} we obtain
\begin{align*} 
    \lnorm{A \vec{v}_k}{2}^2 &\lsim \lnorm{\partial_t \vec{v}_k}{2}^2 + \lnorm{|\vec{v}_k| \, |\nabla \vec{v}_k|}{2}^2 + \lnorm{|\vec{B}_k| \, |\nabla \vec{B}_k|}{2}^2 + \lnorm{|\vec{B}_k| \, |\nabla \vec{m}_k|}{2}^2  \\
    &\qquad + \lnorm{|\nabla \vec{m}_k| \, |\Delta \vec{m}_k|}{2}^2  \\
    &\lsim \lnorm{\partial_t \vec{v}_k}{2}^2 + \hnorm{\vec{v}_k}{1}^3 \hnorm{\vec{v}_k}{2} + \hnorm{\vec{B}_k}{1}^3 \hnorm{\vec{B}_k}{2}  \\
    &\qquad + \hnorm{\vec{B}_k}{1}^2 \hnorm{\vec{m}_k}{1} \hnorm{\vec{m}_k}{2} + \lnorm{\nabla \vec{m}_k}{6}^2 \lnorm{\Delta \vec{m}_k}{3}^2.
\end{align*}
Now, estimating the last term on the right-hand side requires additional effort by way of Lemma \ref{lem:carbou}. We have for sufficiently small $\varepsilon > 0$, with the Sobolev embedding $\HB^1(\Omega) \hookrightarrow \LB^6(\Omega)$,
\begin{align*}
    \lnorm{\nabla \vec{m}_k}{6}^2 \lnorm{\Delta \vec{m}_k}{3}^2 &\lsim \hnorm{\vec{m}_k}{2}^2 \left(\hnorm{\vec{m}_k}{2}^2 + \hnorm{\vec{m}_k}{2} \lnorm{\nabla \Delta \vec{m}_k}{2}\right) \\
    &\lsim \hnorm{\vec{m}_k}{2}^4 + \hnorm{\vec{m}_k}{2}^6 + \varepsilon \lnorm{\nabla \Delta \vec{m}_k}{2}^2.
\end{align*}
Continuing with our analysis, we now obtain
\begin{align} \label{eqn:Avk-estimate}
    \lnorm{A \vec{v}_k}{2}^2 &\lsim \lnorm{\partial_t \vec{v}_k}{2}^2 + \hnorm{\vec{v}_k}{1}^6  + \hnorm{\vec{B}_k}{1}^6 + \hnorm{\vec{m}_k}{1}^6 + \hnorm{\vec{m}_k}{2}^2  \nonumber \\
    &\qquad \hnorm{\vec{m}_k}{2}^4 + \hnorm{\vec{m}_k}{2}^6 + \varepsilon \hnorm{\vec{v}_k}{2}^2 + \varepsilon \hnorm{\vec{B}_k}{2}^2 + \varepsilon \lnorm{\nabla \Delta \vec{m}_k}{2}^2 \nonumber \\
    &\lsim \lnorm{\partial_t \vec{v}_k}{2}^2 + \hnorm{\vec{v}_k}{1}^6  + \hnorm{\vec{B}_k}{1}^6 + \hnorm{\vec{m}_k}{2}^6 + \hnorm{\vec{v}_k}{1}^2  \nonumber \\
    &\qquad + \hnorm{\vec{B}_k}{1}^2 + \hnorm{\vec{m}_k}{2}^2 + \hnorm{\vec{m}_k}{2}^4 + \hnorm{\vec{m}_k}{2}^6 \nonumber \\
    &\qquad +  \varepsilon \lnorm{\nabla^2 \vec{v}_k}{2}^2 + \varepsilon \lnorm{\nabla^2 \vec{B}_k}{2}^2 + \varepsilon \lnorm{\nabla \Delta \vec{m}_k}{2}^2.
\end{align}
Next, we rearrange (\ref{eqn:galerkin-magnetic-field}) to obtain
\begin{equation} \label{eqn:lem-second-order-estimates-velocity-magnetic-field-rearranged-galerkin-magnetic-field}
    \liprod{\curl^2 \vec{B}_k}{\boldsymbol{\omega}} =-\liprod{\partial_t \vec{B}_k}{\boldsymbol{\boldsymbol{\omega}}} +  \liprod{\curl(\vec{v}_k \times \vec{B}_k)}{\boldsymbol{\omega}}.
\end{equation}
Setting $\boldsymbol{\omega} = \curl^2 \vec{B}_k$, applying \eqref{eqn:curl-cross-product}, and using the same procedure as above, we obtain
\begin{align} \label{eqn:curl2Bk-estimate}
    \lnorm{\curl^2 \vec{B}_k}{2}^2 &\lsim \lnorm{\partial_t \vec{B}_k}{2}^2 + \lnorm{|\nabla \vec{v}_k| \, |\vec{B}_k|}{2}^2 + \lnorm{|\vec{v}_k| \, |\nabla \vec{B}_k|}{2}^2 \nonumber \\
    &\lsim \lnorm{\partial_t \vec{B}_k}{2}^2 + \hnorm{\vec{B}_k}{1}^2 \hnorm{\vec{v}_k}{1} \hnorm{\vec{v}_k}{2} + \hnorm{\vec{v}_k}{1}^2 \hnorm{\vec{B}_k}{1} \hnorm{\vec{B}_k}{2}  \nonumber \\
    &\lsim \lnorm{\partial_t \vec{B}_k}{2}^2 + \hnorm{\vec{B}_k}{1}^6 + \hnorm{\vec{v}_k}{1}^6 + \varepsilon \hnorm{\vec{v}_k}{2}^2 + \varepsilon \hnorm{\vec{B}_k}{2}^2. \nonumber \\
    &\lsim \lnorm{\partial_t \vec{B}_k}{2}^2 + \hnorm{\vec{B}_k}{1}^6 + \hnorm{\vec{v}_k}{1}^6 + \hnorm{\vec{v}_k}{1}^2 + \hnorm{\vec{B}_k}{1}^2 \nonumber \\
    &\qquad + \varepsilon \lnorm{\nabla^2 \vec{v}_k}{2}^2 + \varepsilon \lnorm{\nabla^2 \vec{B}_k}{2}^2. 
\end{align}
Lastly, we rearrange \eqref{eqn:galerkin-magnetisation} so that we have
\begin{align*}
    -\liprod{\Delta \vec{m}_k}{\boldsymbol{\xi}} &= -\liprod{\partial_t \vec{m}_k}{\boldsymbol{\xi}} -\liprod{(\vec{v}_k \cdot \nabla)\vec{m}_k}{\boldsymbol{\xi}} + \liprod{|\nabla \vec{m}_k|^2 \vec{m}_k}{\boldsymbol{\xi}} + \liprod{\vec{m}_k \times \Delta \vec{m}_k}{\boldsymbol{\xi}} \\
    &\qquad + \liprod{\vec{m}_k \times \vec{B}_k}{\boldsymbol{\xi}}  + \liprod{\vec{m}_k \times (\vec{m}_k \times \vec{B}_k)}{\boldsymbol{\xi}}.
\end{align*}
Setting $\vec{\xi} = \Delta^2 \vec{m}_k$ and integrating by parts we obtain
\begin{align*}
    \lnorm{\nabla \Delta \vec{m}_k}{2}^2 &= \liprod{\partial_t \nabla \vec{m}_k}{\nabla \Delta \vec{m}_k} + \liprod{\nabla \Big((\vec{v}_k \cdot \nabla)\vec{m}_k\Big)}{\nabla \Delta \vec{m}_k}  - \liprod{\nabla \Big(|\nabla \vec{m}_k|^2 \vec{m}_k\Big)}{\nabla \Delta \vec{m}_k} \\
    &\qquad - \liprod{\nabla\Big(\vec{m}_k \times \Delta \vec{m}_k\Big)}{\nabla \Delta \vec{m}_k} - \liprod{\nabla(\vec{m}_k \times \vec{B}_k\Big)}{\nabla \Delta \vec{m}_k} \\
    &\qquad - \liprod{\nabla \Big(\vec{m}_k \times (\vec{m}_k \times \vec{B}_k)\Big)}{\nabla \Delta \vec{m}_k} \\
    &\lsim |I_1| + \cdots + |I_6|,
\end{align*}
where each $I_i$ represents the corresponding inner product in the equation above. Previously, we treated $I_2$--$I_6$ liberally in Lemma \ref{lem:second-order-estimate-magnetisation} but here we have to be more careful with our estimates. As usual, we produce these estimates by Young's inequality and the Sobolev embedding theorem. For sufficiently small $\varepsilon > 0$, we estimate $I_1$ to be
\begin{align*}
    |I_1| &\lsim \lnorm{\partial_t \nabla \vec{m}_k}{2}^2 + \varepsilon \lnorm{\nabla \Delta \vec{m}_k}{2}^2.
\end{align*}
To obtain the estimate for $I_2$, we use the Gagliardo-Nirenberg inequality (Theorem \ref{thm:gagliardo-nirenberg}), the Sobolev embedding $\HB^1(\Omega) \hookrightarrow \LB^6(\Omega) \hookrightarrow \LB^3(\Omega)$ and Lemma \ref{lem:carbou} so that we have
\begin{align*}
    |I_2| &\lsim \lnorm{\nabla \vec{m}_k}{6} \lnorm{\nabla \vec{v}_k}{3} \lnorm{\nabla \Delta \vec{m}_k}{2} + \lnorm{\vec{v}_k}{6} \lnorm{\nabla^2 \vec{m}_k}{3} \lnorm{\nabla \Delta \vec{m}_k}{2} \\
    &\lsim \hnorm{\vec{m}_k}{2} \hnorm{\vec{v}_k}{1}^{1/2} \hnorm{\vec{v}_k}{2}^{1/2} \lnorm{\nabla \Delta \vec{m}_k}{2} \\
    &\qquad + \hnorm{\vec{v}_k}{1} \left(\hnorm{\vec{m}_k}{2} + \hnorm{\vec{m}_k}{2}^{1/2} \lnorm{\nabla \Delta \vec{m}_k}{2}^{1/2}\right) \lnorm{\nabla \Delta \vec{m}_k}{2} \\
    &\lsim \hnorm{\vec{m}_k}{2}^6 + \hnorm{\vec{v}_k}{1}^6 + \hnorm{\vec{v}_k}{1}^2 + \varepsilon \lnorm{\nabla^2 \vec{v}_k}{2}^2 + \varepsilon \lnorm{\nabla \Delta \vec{m}_k}{2}^2 \\
    &\qquad + \hnorm{\vec{v}_k}{1}^4 + \hnorm{\vec{m}_k}{2}^4 + \hnorm{\vec{v}_k}{1}^6 + \hnorm{\vec{m}_k}{2}^6 + \varepsilon \lnorm{\nabla \Delta \vec{m}_k}{2}^2 \\
    &\lsim \hnorm{\vec{v}_k}{1}^2 + \hnorm{\vec{v}_k}{1}^4 + \hnorm{\vec{v}_k}{1}^6 + \hnorm{\vec{m}_k}{2}^4 + \hnorm{\vec{m}_k}{2}^6 \\
    &\qquad + \varepsilon \lnorm{\nabla^2 \vec{v}_k}{2}^2 + \varepsilon \lnorm{\nabla \Delta \vec{m}_k}{2}^2.
\end{align*}
We estimate $I_3$ as we did $I_2$ but with the Sobolev embedding $\HB^2(\Omega) \hookrightarrow \LB^{\infty}(\Omega)$ in addition, to produce
\begin{align*}
    |I_3| &\lsim \lnorm{\vec{m}_k}{\infty} \lnorm{\nabla \vec{m}_k}{6} \lnorm{\nabla^2 \vec{m}_k}{3} \lnorm{\nabla \Delta \vec{m}_k}{2} + \lnorm{\nabla \vec{m}_k}{6}^3 \lnorm{\nabla \Delta \vec{m}_k}{2} \\
    &\lsim \hnorm{\vec{m}_k}{2}^2 \left(\hnorm{\vec{m}_k}{2} + \hnorm{\vec{m}_k}{2}^{1/2} \lnorm{\nabla \Delta \vec{m}_k}{2}^{1/2}\right) \lnorm{\nabla \Delta \vec{m}_k}{2} \\
    &\qquad + \hnorm{\vec{m}_k}{2}^3 \lnorm{\nabla \Delta \vec{m}_k}{2} \\
    &\lsim \hnorm{\vec{m}_k}{2}^6 + \hnorm{\vec{m}_k}{2}^{10} + \varepsilon \lnorm{\nabla \Delta \vec{m}_k}{2}^2.
\end{align*}
The estimate for $I_4$ is simplified due to the orthogonality property of the cross-product. We bound it in a similar manner to $I_2$ so that we have
\begin{align*}
    |I_4| &\lsim \hnorm{\vec{m}_k}{2}^4 + \hnorm{\vec{m}_k}{2}^6 + \varepsilon \lnorm{\nabla \Delta \vec{m}_k}{2}^2.
\end{align*}
The remaining terms can be estimated using the same embeddings as $I_3$ to obtain
\begin{align*}
    |I_5| &\lsim \hnorm{\vec{B}_k}{1}^4 + \hnorm{\vec{m}_k}{2}^4 + \varepsilon \lnorm{\nabla \Delta \vec{m}_k}{2}^2, \\
    |I_6| &\lsim \hnorm{\vec{B}_k}{1}^6 + \hnorm{\vec{m}_k}{2}^6 + \varepsilon \lnorm{\nabla \Delta \vec{m}_k}{2}^2. 
\end{align*}
Summing up these estimates, we have
\begin{align} \label{eqn:nabla-delta-mk-estimate}
    \lnorm{\nabla \Delta \vec{m}_k}{2}^2 &\lsim \hnorm{\vec{v}_k}{1}^2 + \hnorm{\vec{v}_k}{1}^4 + \hnorm{\vec{v}_k}{1}^6 + \hnorm{\vec{B}_k}{1}^4 + \hnorm{\vec{B}_k}{1}^6 \nonumber \\
    &\qquad + \hnorm{\partial_t \vec{m}_k}{1}^2 + \hnorm{\vec{m}_k}{2}^4 + \hnorm{\vec{m}_k}{2}^6 + \hnorm{\vec{m}_k}{2}^{10} \nonumber \\
    &\qquad + \varepsilon \lnorm{\nabla^2 \vec{v}_k}{2}^2 + \varepsilon \lnorm{\nabla \Delta \vec{m}_k}{2}^2.
\end{align}
By \cite[Proposition 4.7]{constantin1988-book}, that is
\begin{equation*}
    \lnorm{\nabla^2 \vec{u}}{2} \lsim \lnorm{A \vec{u}}{2}, \qquad \forall \vec{u} \in \HB^2(\Omega),
\end{equation*}
and Lemma \ref{lem:curl-estimate}, and the estimates \eqref{eqn:Avk-estimate}, \eqref{eqn:curl2Bk-estimate} and \eqref{eqn:nabla-delta-mk-estimate}, we conclude that for sufficiently small $\varepsilon$, 
\begin{align*}
    &\lnorm{\nabla^2 \vec{v}_k}{2}^2 + \lnorm{\nabla^2 \vec{B}_k}{2}^2 + \lnorm{\nabla \Delta \vec{m}_k}{2}^2 \\
    &\qquad \lsim \lnorm{\partial_t \vec{v}_k}{2}^2 + \hnorm{\vec{v}_k}{1}^2 + \hnorm{\vec{v}_k}{1}^4 + \hnorm{\vec{v}_k}{1}^6  \\
    &\qquad \qquad + \lnorm{\partial_t \vec{B}_k}{2}^2 + \hnorm{\vec{B}_k}{1}^2 + \hnorm{\vec{B}_k}{1}^4 + \hnorm{\vec{B}_k}{1}^6  \\
    &\qquad \qquad + \hnorm{\partial_t \vec{m}_k}{1}^2 + \hnorm{\vec{m}_k}{2}^2 + \hnorm{\vec{m}_k}{2}^4 + \hnorm{\vec{m}_k}{2}^6 + \hnorm{\vec{m}_k}{2}^{10}.
\end{align*}
The result is proved by noting that
\begin{align*}
    \hnorm{\vec{v}_k}{2}^2 &\lsim \hnorm{\vec{v}_k}{1}^2 + \lnorm{\nabla^2 \vec{v}_k}{2}^2, \\
    \hnorm{\vec{B}_k}{2}^2 &\lsim \hnorm{\vec{B}_k}{1}^2 + \lnorm{\nabla^2 \vec{B}_k}{2}^2, \\
    \hnorm{\vec{m}_k}{3}^2 &\lsim \hnorm{\vec{m}_k}{2}^2 + \lnorm{\nabla \Delta \vec{m}_k}{2}^2,
\end{align*}
where the latter inequality is given by \eqref{eqn:lem-carbou-eqn-2} in Lemma \ref{lem:carbou}.
\end{proof}

Before we apply the Gronwall Lemma in Proposition \ref{prop:combined-estimates}, we need to ensure that the time derivatives of $\vec{v}_k$, $\vec{B}_k$ and $\vec{m}_k$ are appropriately bounded by the initial data.

\begin{proposition} \label{prop:limit-time-derivative-initial-data}
For each $k \in \N$, we have
\begin{align*}
    \lnorm{\partial_t \vec{v}_k(0)}{2}^2 + \lnorm{\partial_t \vec{B}_k(0)}{2}^2 + \hnorm{\partial_t \vec{m}_k(0)}{1}^2 \lsim 1.
\end{align*}
\end{proposition}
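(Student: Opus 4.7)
The plan is to evaluate the Galerkin equations \eqref{eqn:galerkin-velocity}--\eqref{eqn:galerkin-magnetisation} at $t = 0$ and test each one against the corresponding time derivative at $t = 0$, which is an admissible element of $\V_k$, $\XB_k$, or $\YB_k$ respectively. Since $\vec{v}_0^k \to \vec{v}_0$ in $\HB^2(\Omega)$, $\vec{B}_0^k \to \vec{B}_0$ in $\HB^2(\Omega)$, and $\vec{m}_0^k \to \vec{m}_0$ in $\HB^3(\Omega)$, these sequences are uniformly bounded in those norms, and the ``$\lsim 1$'' hides a constant that depends only on the initial data.

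For $\partial_t \vec{v}_k(0)$, setting $\boldsymbol{\varphi} = \partial_t \vec{v}_k(0)$ in \eqref{eqn:galerkin-velocity} at $t = 0$ and using $\partial_t \vec{v}_k(0) = \vec{0}$ on $\partial\Omega$ to integrate by parts yields
\begin{equation*}
    \lnorm{\partial_t \vec{v}_k(0)}{2}^2 = \liprod{\Delta \vec{v}_0^k - (\vec{v}_0^k \cdot \nabla)\vec{v}_0^k + \curl \vec{B}_0^k \times \vec{B}_0^k + (\vec{B}_0^k \cdot \nabla)\vec{m}_0^k - (\nabla \vec{m}_0^k)^{\top} \Delta \vec{m}_0^k}{\partial_t \vec{v}_k(0)}.
\end{equation*}
Applying Cauchy--Schwarz, Young's inequality, and the Sobolev embedding $\HB^2(\Omega) \hookrightarrow \LB^{\infty}(\Omega)$ exactly as in Lemma \ref{lem:first-order-estimate-2}, every factor on the right is controlled by $\hnorm{\vec{v}_0^k}{2}$, $\hnorm{\vec{B}_0^k}{2}$, and $\hnorm{\vec{m}_0^k}{3}$, all uniformly bounded in $k$. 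Absorbing half of $\lnorm{\partial_t \vec{v}_k(0)}{2}^2$ into the left-hand side gives the desired bound. The analogous argument applied to \eqref{eqn:galerkin-magnetic-field} with $\boldsymbol{\omega} = \partial_t \vec{B}_k(0)$ (using \eqref{eqn:curl-squared} together with $\div \vec{B}_0^k = 0$, Lemma \ref{lem:curl-vxb}, and Lemma \ref{lem:curl-estimate}) yields $\lnorm{\partial_t \vec{B}_k(0)}{2}^2 \lsim 1$.

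The non-trivial case is the $\HB^1$-bound on $\partial_t \vec{m}_k(0)$. The key observation is that $\YB_k$ is spanned by Neumann eigenfunctions of $-\Delta$, so $\YB_k$ is invariant under $-\Delta$; hence $\boldsymbol{\xi} := \partial_t \vec{m}_k(0) - \Delta \partial_t \vec{m}_k(0) \in \YB_k$ is an admissible test function. Plugging it into \eqref{eqn:galerkin-magnetisation} at $t = 0$ and integrating by parts, using the Neumann boundary conditions satisfied by both $\vec{m}_0^k$ and every element of $\YB_k$ so all boundary terms vanish, the linear part of the left-hand side becomes
\begin{equation*}
    \liprod{\partial_t \vec{m}_k(0)}{\partial_t \vec{m}_k(0) - \Delta \partial_t \vec{m}_k(0)} + \liprod{\nabla \vec{m}_0^k}{\nabla(\partial_t \vec{m}_k(0) - \Delta \partial_t \vec{m}_k(0))} = \hnorm{\partial_t \vec{m}_k(0)}{1}^2 - \liprod{\Delta \vec{m}_0^k}{\partial_t \vec{m}_k(0)} - \liprod{\nabla \Delta \vec{m}_0^k}{\nabla \partial_t \vec{m}_k(0)}.
\end{equation*}
On the right-hand side, each non-linear term $F \in \{(\vec{v}_0^k \cdot \nabla)\vec{m}_0^k,\ |\nabla \vec{m}_0^k|^2 \vec{m}_0^k,\ \vec{m}_0^k \times \Delta \vec{m}_0^k,\ \vec{m}_0^k \times \vec{B}_0^k,\ \vec{m}_0^k \times (\vec{m}_0^k \times \vec{B}_0^k)\}$ is paired with $\boldsymbol{\xi}$, and after one further integration by parts one rewrites $\liprod{F}{\boldsymbol{\xi}} = \liprod{F}{\partial_t \vec{m}_k(0)} + \liprod{\nabla F}{\nabla \partial_t \vec{m}_k(0)}$, which is bounded by $\hnorm{F}{1} \hnorm{\partial_t \vec{m}_k(0)}{1}$. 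A direct check, using that $\HB^2(\Omega)$ is an algebra, $\HB^2(\Omega) \hookrightarrow \LB^{\infty}(\Omega)$, and the $\HB^2 \times \HB^2 \times \HB^3$ bounds on the initial data, shows $\hnorm{F}{1} \lsim 1$ uniformly in $k$. Cauchy--Schwarz and Young's inequality then absorb all $\hnorm{\partial_t \vec{m}_k(0)}{1}$ factors into the left-hand side and conclude the proof.

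The main technical obstacle is the $\HB^1$ estimate for $\partial_t \vec{m}_k(0)$: one must choose the test function $(I - \Delta)\partial_t \vec{m}_k(0)$ (whose admissibility hinges on the eigenfunction basis) and verify that every non-linearity on the right-hand side of \eqref{eqn:galerkin-magnetisation}, including the third-order term $\vec{m}_0^k \times \Delta \vec{m}_0^k$, is in $\HB^1(\Omega)$ with norm controlled by the initial data; this is precisely where the $\HB^3$-regularity of $\vec{m}_0$ (rather than just $\HB^2$) becomes essential.
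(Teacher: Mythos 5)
Your proposal is correct and follows essentially the same route as the paper: the paper also evaluates the Galerkin equations at $t=0$, tests against $\partial_t \vec{v}_k(0)$ and $\partial_t \vec{B}_k(0)$, and for the magnetisation obtains the $\HB^1$ bound by additionally testing with $-\Delta\partial_t \vec{m}_k(0)$, bounding everything by the $\HB^2\times\HB^2\times\HB^3$ norms of $(\vec{v}_0^k,\vec{B}_0^k,\vec{m}_0^k)$, which are uniformly bounded by the assumed convergence of the approximate initial data. Your single test function $(I-\Delta)\partial_t \vec{m}_k(0)$ and the estimate $\hnorm{F}{1}\lsim 1$ via the algebra/embedding properties are just a repackaging of the paper's two-step test and term-by-term product-rule estimates of $\nabla F$, so the two arguments are equivalent.
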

\begin{proof}
From \eqref{eqn:galerkin-ode} we know that the limit as $t \to 0^+$ of $\partial_t \vec{v}_k$, $\partial_t \vec{B}_k$ and $\partial_t \vec{m}_k$ exist. Now, we seek to make use of the converging sequence $(\vec{v}_0^k, \vec{B}_0^k) \to (\vec{v}_0, \vec{B}_0)$ and $\vec{m}_0^k \to \vec{m}_0$ in $\HB^2(\Omega)$ and $\HB^3(\Omega)$, respectively, to bound the values of $\partial_t \vec{v}_k$, $\partial_t \vec{B}_k$ and $\partial_t \vec{m}_k$ at $t=0$. Rearranging \eqref{eqn:lem-second-order-estimates-velocity-magnetic-field-rearranged-galerkin-velocity} and setting $t=0$ we obtain
\begin{align*}
    \liprod{\partial_t \vec{v}_k(0)}{\vec{\varphi}} &= \liprod{\Delta \vec{v}_k^0}{\vec{\varphi}} - \liprod{(\vec{v}_k^0 \cdot \nabla) \vec{v}_k^0}{\vec{\varphi}} + \liprod{\curl \vec{B}_k^0 \times \vec{B}_k^0}{\vec{\varphi}} + \liprod{(\vec{B}_k^0 \cdot \nabla)\vec{m}_k^0}{\vec{\varphi}} \\
    &\qquad - \liprod{(\nabla \vec{m}_k^0)^{\top} \Delta \vec{m}_k^0}{\vec{\varphi}}.
\end{align*}
Setting $\vec{\varphi} = \partial_t \vec{v}_k(0)$, but omitting the initial data notation for the sake of simplicity, and using the Sobolev embedding $\HB^2(\Omega) \hookrightarrow \LB^{\infty}(\Omega)$ we have
\begin{align*}
    \lnorm{\partial_t \vec{v}_k}{2}^2 &\lsim \lnorm{\Delta \vec{v}_k}{2} \lnorm{\partial_t \vec{v}_k}{2} + \lnorm{\vec{v}_k}{\infty} \lnorm{\nabla \vec{v}_k}{2} \lnorm{\partial_t \vec{v}_k}{2} \\
    &\qquad + \lnorm{\vec{B}_k}{\infty} \lnorm{\curl \vec{B}_k}{2} \lnorm{\partial_t \vec{v}_k}{2} + \lnorm{\vec{B}_k}{\infty} \lnorm{\nabla \vec{m}_k}{2} \lnorm{\partial_t \vec{v}_k}{2} \\
    &\qquad + \lnorm{\nabla \vec{m}_k}{\infty} \lnorm{\Delta \vec{m}_k}{2} \lnorm{\partial_t \vec{v}_k}{2} \\
    &\lsim \hnorm{\vec{v}_k}{2} \lnorm{\partial_t \vec{v}_k}{2} + \hnorm{\vec{v}_k}{2}^2 \lnorm{\partial_t \vec{v}_k}{2} + \hnorm{\vec{B}_k}{2}^2 \lnorm{\partial_t \vec{v}_k}{2} \\
    &\qquad + \hnorm{\vec{B}_k}{2} \hnorm{\vec{m}_k}{1} \lnorm{\partial_t \vec{v}_k}{2} + \hnorm{\vec{m}_k}{3}^2 \lnorm{\partial_t \vec{v}_k}{2}.
\end{align*}
Using Young's inequality, we can bring $\partial_t \vec{v}$ over to the left-hand side so that
\begin{align*}
    \lnorm{\partial_t \vec{v}_k(0)}{2}^2 &\lsim \hnorm{\vec{v}_k}{2}^2 + \hnorm{\vec{v}_k}{2}^4 + \hnorm{\vec{B}_k}{2}^4 + \hnorm{\vec{m}_k}{3}^4 \\
    &= \hnorm{\vec{v}_k^0}{2}^2 + \hnorm{\vec{v}_k^0}{2}^4 + \hnorm{\vec{B}_k^0}{2}^4 + \hnorm{\vec{m}_k^0}{3}^4 \\
    &\lsim \hnorm{\vec{v}_0}{2}^2 + \hnorm{\vec{v}_0}{2}^4 + \hnorm{\vec{B}_0}{2}^4 + \hnorm{\vec{m}_0}{3}^4,
\end{align*}
since $(\vec{v}_0^k, \vec{B}_0^k) \to (\vec{v}_0, \vec{B}_0)$ and $\vec{m}_0^k \to \vec{m}_0$ in $\HB^2(\Omega)$ and $\HB^3(\Omega)$, respectively. Similarly, rearranging \eqref{eqn:lem-second-order-estimates-velocity-magnetic-field-rearranged-galerkin-magnetic-field} and using the same procedure, along with Lemma \ref{lem:curl-vxb}, we obtain 
\begin{equation*}
    \lnorm{\partial_t \vec{B}_k(0)}{2}^2 \lsim \hnorm{\vec{v}_0}{2}^4 + \hnorm{\vec{B}_0}{2}^2 + \hnorm{\vec{B}_0}{2}^4.
\end{equation*}
Repeating this process, by first rearranging \eqref{eqn:galerkin-magnetisation} with $t=0$, and then using the additional Sobolev embedding $\HB^1(\Omega) \hookrightarrow \LB^6(\Omega) \hookrightarrow \LB^4(\Omega)$, we have
\begin{equation*}
    \lnorm{\partial_t \vec{m}_k(0)}{2}^2 \lsim \hnorm{\vec{v}_0}{2}^4 + \hnorm{\vec{B}_0}{2}^4 + \hnorm{\vec{m}_0}{2}^2 + \hnorm{\vec{m}_0}{2}^8.
\end{equation*}
However, we still require additional regularity on the time derivative for $\vec{m}_k$. Setting $t=0$ and $\vec{\xi} = -\partial_t \Delta \vec{m}_k(0)$ in \eqref{eqn:galerkin-magnetisation} and integrating by parts we have
\begin{align*}
    \lnorm{\partial_t \nabla \vec{m}_k}{2}^2 &= \liprod{\nabla \Delta \vec{m}_k}{\partial_t \nabla \vec{m}_k} - \liprod{\nabla [(\vec{v}_k \cdot \nabla)\vec{m}_k]}{\partial_t \nabla \vec{m}_k} + \liprod{\nabla (|\nabla \vec{m}_k|^2 \vec{m}_k)}{\partial_t \nabla \vec{m}_k}  \\
    &\qquad + \liprod{\nabla(\vec{m}_k \times \Delta \vec{m}_k)}{\partial_t \nabla \vec{m}_k} + \liprod{\nabla(\vec{m}_k \times \vec{B}_k)}{\partial_t \nabla \vec{m}_k} \\
    &\qquad + \liprod{\nabla[\vec{m}_k \times (\vec{m}_k \times \vec{B}_k)]}{\partial_t \nabla \vec{m}_k}.
\end{align*}
We thus obtain
\begin{align*}
    \lnorm{\partial_t \nabla \vec{m}_k(0)}{2}^2 &\lsim \lnorm{\nabla \Delta \vec{m}_k}{2} \lnorm{\partial_t \nabla \vec{m}_k}{2} + \lnorm{\nabla \vec{m}_k}{\infty} \lnorm{\nabla \vec{v}_k}{2} \lnorm{\partial_t \nabla \vec{m}_k}{2}\\
    &\qquad + \lnorm{\vec{v}_k}{\infty} \lnorm{\nabla^2 \vec{m}_k}{2} \lnorm{\partial_t \nabla \vec{m}_k}{2} \\
    &\qquad + \lnorm{\vec{m}_k}{\infty} \lnorm{\nabla \vec{m}_k}{\infty} \lnorm{\nabla^2 \vec{m}_k}{2} \lnorm{\partial_t \nabla \vec{m}_k}{2} \\
    &\qquad + \lnorm{\nabla \vec{m}_k}{\infty}^2 \lnorm{\nabla \vec{m}_k}{2} \lnorm{\partial_t \nabla \vec{m}_k}{2} \\
    &\qquad + \lnorm{\nabla \vec{m}_k}{\infty} \lnorm{\Delta \vec{m}_k}{2} \lnorm{\partial_t \nabla \vec{m}_k}{2} \\
    &\qquad + \lnorm{\vec{m}_k}{\infty} \lnorm{\nabla \Delta \vec{m}_k}{2} \lnorm{\partial_t \nabla \vec{m}_k}{2} \\
    &\qquad + \lnorm{\vec{B}_k}{\infty} \lnorm{\nabla \vec{m}_k}{2} \lnorm{\partial_t \nabla \vec{m}_k}{2} \\
    &\qquad + \lnorm{\vec{m}_k}{\infty} \lnorm{\nabla \vec{B}_k}{2} \lnorm{\partial_t \nabla \vec{m}_k}{2} \\
    &\qquad + \lnorm{\vec{B}_k}{\infty} \lnorm{\vec{m}_k}{\infty} \lnorm{\nabla \vec{m}_k}{2} \lnorm{\partial_t \nabla \vec{m}_k}{2} \\
    &\qquad + \lnorm{\vec{m}_k}{\infty}^2 \lnorm{\nabla \vec{B}_k}{2} \lnorm{\partial_t \nabla \vec{m}_k}{2} \\
    &\lsim \hnorm{\vec{v}_k}{2}^4 + \hnorm{\vec{B}_k}{2}^4 + \hnorm{\vec{m}_k}{2}^4 + \hnorm{\vec{m}_k}{2}^8 + \hnorm{\vec{m}_k}{3}^2 \\
    &\qquad + \hnorm{\vec{m}_k}{3}^4 + \hnorm{\vec{m}_k}{3}^6.
\end{align*}
Therefore,
\begin{align*}
    \lnorm{\partial_t \nabla \vec{m}_k(0)}{2}^2 &\lsim \hnorm{\vec{v}_0}{2}^4 + \hnorm{\vec{B}_0}{2}^4 + \hnorm{\vec{m}_0}{2}^4 + \hnorm{\vec{m}_0}{2}^8 + \hnorm{\vec{m}_0}{3}^2 \\
    &\qquad + \hnorm{\vec{m}_0}{3}^6,
\end{align*}
thus completing the proof.
\end{proof}

We are now in a position to combine all our precursory inequalities to finally establish estimates for $\vec{v}_k$, $\vec{B}_k$, and $\vec{m}_k$.

\begin{proposition} \label{prop:combined-estimates}
There exists $T^* > 0$ satisfying $T^* \leq T$ such that for any $t \in (0,T^*]$
\begin{equation*}
    \sup_{0 \leq s \leq t} \left( \hnorm{\vec{v}_k(s)}{2}^2 + \hnorm{\vec{B}_k(s)}{2}^2 + \hnorm{\vec{m}_k(s)}{3}^2\right) \lsim 1,
\end{equation*}
\begin{equation*}
    \sup_{0 \leq s \leq t} \left(\lnorm{\partial_s \vec{v}_k(s)}{2}^2 + \lnorm{\partial_s \vec{B}_k(s)}{2}^2 + \hnorm{\partial_s \vec{m}_k(s)}{1}^2\right) \lsim 1
\end{equation*}
and
\begin{equation*}
    \intOt{\left(\lnorm{\partial_s \nabla \vec{v}_k(s)}{2}^2 + \lnorm{\partial_s \nabla \vec{B}_k(s)}{2}^2 + \lnorm{\partial_s \Delta \vec{m}_k(s)}{2}^2\right)} \lsim 1.
\end{equation*}
\end{proposition}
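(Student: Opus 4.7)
The strategy is to bundle the seven differential inequalities of Lemmas \ref{lem:first-order-estimate-1}--\ref{lem:time-derivative-estimate-magnetisation-2} into a single super-linear ODE inequality for a lower-order auxiliary quantity, close the estimate on $J$ via Lemma \ref{lem:second-order-estimates-velocity-magnetic-field}, and then deduce local-in-time boundedness by comparison with a Bernoulli-type ODE.

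Concretely, I would introduce
\[
    \Phi(t) := \hnorm{\vec{v}_k(t)}{1}^2 + \hnorm{\vec{B}_k(t)}{1}^2 + \hnorm{\vec{m}_k(t)}{2}^2 + \lnorm{\partial_t \vec{v}_k(t)}{2}^2 + \lnorm{\partial_t \vec{B}_k(t)}{2}^2 + \hnorm{\partial_t \vec{m}_k(t)}{1}^2,
\]
which collects exactly the quantities appearing on the right-hand side of Lemma \ref{lem:second-order-estimates-velocity-magnetic-field}. Summing the inequalities of Lemmas \ref{lem:first-order-estimate-1}, \ref{lem:first-order-estimate-2}, \ref{lem:second-order-estimate-magnetisation}, \ref{lem:time-derivative-estimate-velocity}, \ref{lem:time-derivative-estimate-magnetic-field}, \ref{lem:time-derivative-estimate-magnetisation-1} and \ref{lem:time-derivative-estimate-magnetisation-2} with a single $\varepsilon > 0$ chosen small enough, the spurious right-hand side terms $\varepsilon \lnorm{\partial_t \nabla \vec{v}_k}{2}^2$, $\varepsilon \lnorm{\partial_t \nabla \vec{B}_k}{2}^2$ and $\varepsilon \lnorm{\partial_t \Delta \vec{m}_k}{2}^2$ can be absorbed into their left-hand counterparts (the dissipation produced by the diffusive part of the Galerkin system). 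Using $J + J^{3/2} + J^2 \lsim 1 + J + J^3$, this yields an inequality of the form
\[
    \ddt \Phi(t) + \frac{1}{2}\left(\lnorm{\partial_t \nabla \vec{v}_k}{2}^2 + \lnorm{\partial_t \nabla \vec{B}_k}{2}^2 + \lnorm{\partial_t \Delta \vec{m}_k}{2}^2\right) \lsim J(t) + J(t)^3.
\]

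Next, Lemma \ref{lem:second-order-estimates-velocity-magnetic-field} bounds the second-order spatial part $J - \Phi = \hnorm{\vec{v}_k}{2}^2 + \hnorm{\vec{B}_k}{2}^2 + \hnorm{\vec{m}_k}{3}^2$ by a polynomial in $\Phi$ of degree at most five (the worst contribution being $\hnorm{\vec{m}_k}{2}^{10} \lsim \Phi^5$), hence $J \lsim \Phi + \Phi^5$ and consequently $J + J^3 \lsim 1 + \Phi^{q}$ for some fixed $q \in \N$. Substituting,
\[
    \ddt \Phi(t) \lsim 1 + \Phi(t)^{q}.
\]
By Proposition \ref{prop:limit-time-derivative-initial-data} together with $\vec{v}_0^k \to \vec{v}_0, \vec{B}_0^k \to \vec{B}_0$ in $\HB^2(\Omega)$ and $\vec{m}_0^k \to \vec{m}_0$ in $\HB^3(\Omega)$, the initial value $\Phi(0)$ is bounded uniformly in $k$. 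Comparison with the scalar ODE $y' = C(1 + y^q)$, $y(0) = \Phi(0)$, then furnishes $T^* > 0$ independent of $k$ such that $\Phi(t) \lsim 1$ on $[0, T^*]$. Re-invoking Lemma \ref{lem:second-order-estimates-velocity-magnetic-field} converts this into the first assertion, the second is already contained in $\Phi$, and the third follows by integrating the displayed differential inequality over $[0, T^*]$.

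The principal difficulty is bookkeeping: one must verify that a \emph{single} smallness threshold for $\varepsilon$ simultaneously absorbs the $\varepsilon$-terms across the three time-derivative lemmas, and that, when Lemma \ref{lem:second-order-estimates-velocity-magnetic-field} is substituted back in, no cyclic dependence develops between $\Phi$ and $J$ (this is why $\Phi$ is defined to contain \emph{none} of the second-order spatial norms that Lemma \ref{lem:second-order-estimates-velocity-magnetic-field} controls). Once this accounting is done, the remainder is the standard comparison argument for a super-linear scalar ODE, which yields both the existence of $T^*$ and the uniformity of the bounds in $k$.
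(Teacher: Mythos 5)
Your proposal is correct and follows essentially the same route as the paper: the paper defines the same lower-order quantity (there called $E(t)$), sums Lemmas \ref{lem:first-order-estimate-1}--\ref{lem:time-derivative-estimate-magnetisation-2} with the $\varepsilon$-terms absorbed, closes the loop with Lemma \ref{lem:second-order-estimates-velocity-magnetic-field} to get $J \lsim E + E^5$, and bounds the initial values via Proposition \ref{prop:limit-time-derivative-initial-data}. The only cosmetic difference is that the paper integrates in time and invokes the Generalised Gronwall Lemma (Theorem \ref{generalised-gronwall-lemma}) rather than your direct comparison with the scalar ODE $y' = C(1+y^q)$, which is the same argument in differential form; the equivalence of your $\Phi$ with the quantity actually appearing under $\ddt$ (involving $\lnorm{\curl \vec{B}_k}{2}$ and $\lnorm{\Delta \vec{m}_k}{2}$) is supplied, as in the paper, by Lemmas \ref{lem:curl-estimate} and \ref{lem:carbou}.
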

\begin{proof}
Combining the results from Lemmas \ref{lem:first-order-estimate-1}--\ref{lem:second-order-estimate-magnetisation}, using Lemma \ref{lem:curl-estimate} and absorbing the remaining $\varepsilon$-terms into the left-hand side, we obtain
\begin{align*}
    &\ddt \Big(\hnorm{\vec{v}_k}{1}^2 + \lnorm{\vec{B}_k}{2}^2 + \lnorm{\curl \vec{B}_k}{2}^2 + \lnorm{\vec{m}_k}{2}^2 + \lnorm{\Delta \vec{m}_k}{2}^2 \\
    &\qquad + \lnorm{\partial_t \vec{v}_k}{2}^2 + \lnorm{\partial_t \vec{B}_k}{2}^2 + \lnorm{\partial_t \vec{m}_k}{2}^2 + \lnorm{\partial_t \nabla \vec{m}_k}{2}^2\Big) \\
    &\qquad \qquad + \left(\lnorm{\partial_t \nabla \vec{v}_k}{2}^2 + \lnorm{\partial_t \nabla \vec{B}_k}{2}^2 + \lnorm{\partial_t \Delta \vec{m}_k}{2}^2 + \lnorm{\nabla \Delta \vec{m}_k}{2}^2\right) \\
    &\qquad \qquad \qquad \lsim J + J^3.
\end{align*}
Integrating over $[0,t]$, using Proposition \ref{prop:limit-time-derivative-initial-data} and using Lemmas \ref{lem:curl-estimate} and \ref{lem:carbou}, we have
\begin{align*}
    &\hnorm{\vec{v}_k}{1}^2 + \hnorm{\vec{B}_k}{1}^2 + \hnorm{\vec{m}_k}{2}^2  + \lnorm{\partial_t \vec{v}_k}{2}^2 + \lnorm{\partial_t \vec{B}_k}{2}^2 + \hnorm{\partial_t \vec{m}_k}{1}^2 \\
    &\qquad + \intOt{\left(\lnorm{\partial_s \nabla \vec{v}_k(s)}{2}^2 + \lnorm{\partial_s \nabla \vec{B}_k(s)}{2}^2 + \lnorm{\partial_s \Delta \vec{m}_k(s)}{2}^2 + \lnorm{\nabla \Delta \vec{m}_k(s)}{2}^2\right)} \\
    &\qquad \qquad \lsim 1 + \intOt{J(s)  + J^3(s)}.
\end{align*}
Let
\begin{equation*}
    E(t) := \hnorm{\vec{v}_k}{1}^2 + \hnorm{\vec{B}_k}{1}^2 + \hnorm{\vec{m}_k}{2}^2  + \lnorm{\partial_t \vec{v}_k}{2}^2 + \lnorm{\partial_t \vec{B}_k}{2}^2 + \hnorm{\partial_t \vec{m}_k}{1}^2,
\end{equation*}
and note that by Lemma \ref{lem:second-order-estimates-velocity-magnetic-field}
\begin{align*}
    \hnorm{\vec{v}_k}{2}^2 + \hnorm{\vec{B}_k}{2}^2 + \hnorm{\vec{m}_k}{3}^2 \lsim E + E^5;
\end{align*}
in other words,
\begin{equation*}
    J \lsim E + E^5.
\end{equation*}
Hence, by repeated use of Young's inequality to increase the exponent, we find that
\begin{align*}
    &E(t) + \intOt{\left(\lnorm{\partial_s \nabla \vec{v}_k(s)}{2}^2 + \lnorm{\partial_s \nabla \vec{B}_k(s)}{2}^2 + \lnorm{\partial_s \Delta \vec{m}_k(s)}{2}^2 + \lnorm{\nabla \Delta \vec{m}_k(s)}{2}^2\right)}\\
    &\qquad \lsim 1 + \intOt{E^{15}(s)}.
\end{align*}
By the Generalised Gronwall Lemma (Theorem \ref{generalised-gronwall-lemma}), the result is proved.
\end{proof}

\section{Proof of Theorem \ref{thm:existence}} \label{sec:existence}

We now take $\vec{v}_k$, $\vec{B}_k$ and $\vec{m}_k$ and pass to the limit to show that their limits are solutions in the sense of Definition \ref{def:weak-solution}.

Proposition \ref{prop:combined-estimates} and the Banach-Alaoglu theorem imply the existence of a subsequence of Galerkin solutions $\{(\vec{v}_k, \vec{B}_k, \vec{m}_k)\}$, still denoted by $\{(\vec{v}_k, \vec{B}_k, \vec{m}_k)\}$, such that
\begin{equation} \label{eqn:weak-convergence}
    \begin{cases}
        \begin{aligned}
            \vec{v}_k &\weakto \vec{v} &&\quad \text{weakly in } L^2(0,T^*; \HB^2(\Omega)), \\
            \vec{v}_k &\wkstarto \vec{v} &&\quad \text{$\text{weakly}^{\star}$ in } L^{\infty}(0,T^*; \HB^2(\Omega)), \\
            \partial_t \vec{v}_k &\wkstarto \partial_t \vec{v} &&\quad \text{$\text{weakly}^{\star}$ in } L^{\infty}(0,T^*; \LB^2(\Omega)), \\
            \partial_t \vec{v}_k &\weakto \partial_t \vec{v} &&\quad \text{weakly in } L^2(0,T^*; \HB^1(\Omega)), \\
            &&& \\
            \vec{B}_k &\weakto \vec{B} &&\quad\text{weakly in } L^2(0,T^*; \HB^2(\Omega)), \\
            \vec{B}_k &\wkstarto \vec{B} &&\quad\text{$\text{weakly}^{\star}$ in } L^{\infty}(0,T^*; \HB^2(\Omega)), \\
            \partial_t \vec{B}_k &\wkstarto \partial_t \vec{B} &&\quad\text{$\text{weakly}^{\star}$ in } L^{\infty}(0,T^*; \LB^2(\Omega)), \\
            \partial_t \vec{B}_k &\weakto \partial_t \vec{B} &&\quad\text{weakly in } L^2(0,T^*; \HB^1(\Omega)), \\
            &&& \\
            \vec{m}_k &\weakto \vec{m} &&\quad\text{weakly in } L^2(0,T^*; \HB^3(\Omega)), \\
            \vec{m}_k &\wkstarto \vec{m} &&\quad\text{$\text{weakly}^{\star}$ in } L^{\infty}(0,T^*; \HB^3(\Omega)), \\
            \partial_t \vec{m}_k &\wkstarto \partial_t \vec{m} &&\quad\text{$\text{weakly}^{\star}$ in } L^{\infty}(0,T^*; \HB^1(\Omega)), \\
            \partial_t \vec{m}_k &\weakto \partial_t \vec{m} &&\quad\text{weakly in } L^2(0,T^*; \HB^2(\Omega)). \\
        \end{aligned}
    \end{cases}
\end{equation}
By Aubin's Lemma (Theorem \ref{aubins-lemma}), a further subsequence then satisfies
\begin{equation} \label{eqn:strong-convergence}
    \begin{cases}
        \begin{aligned}
            \vec{v}_k &\to \vec{v} &&\quad \text{strongly in } L^2(0,T^*; \HB^1(\Omega)), \\
            \vec{B}_k &\to \vec{B} &&\quad \text{strongly in } L^2(0,T^*; \HB^1(\Omega)), \\
            \vec{m}_k &\to \vec{m} &&\quad \text{strongly in } L^2(0,T^*; \HB^2(\Omega)).
        \end{aligned}
    \end{cases}
\end{equation}

Having established strong convergence, we now show that the non-linear terms in \eqref{eqn:galerkin-velocity}--\eqref{eqn:galerkin-magnetisation} converge to their limiting counterparts. The next three lemmas are concerned with this.

\begin{lemma} \label{lem:convergence-velocity}
Assume that $k \to \infty$. The non-linear terms in \eqref{eqn:galerkin-velocity} converge to their respective limits
\begin{align}
    (\vec{v}_k \cdot \nabla)\vec{v}_k &\to (\vec{v} \cdot \nabla)\vec{v}, \label{eqn:convergence-velocity-1} \\[2ex]
    \curl \vec{B}_k \times \vec{B}_k &\to \curl \vec{B} \times \vec{B}, \label{eqn:convergence-velocity-2} \\[2ex]
    (\vec{B}_k \cdot \nabla)\vec{m}_k &\to (\vec{B} \cdot \nabla)\vec{m}, \label{eqn:convergence-velocity-3} \\[2ex]
    (\nabla \vec{m}_k)^{\top} \Delta \vec{m}_k &\to (\nabla \vec{m})^{\top} \Delta \vec{m}. \label{eqn:convergence-velocity-4}
\end{align}
in the sense of $L^2(0,t; \LB^2(\Omega))$, for all $t \in [0,T^*]$.
\end{lemma}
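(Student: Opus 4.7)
The overall strategy is to write each nonlinear term in the form $F(\vec{u}_k, \vec{w}_k) - F(\vec{u}, \vec{w})$ and decompose it as a sum of two pieces, each having one factor that is the difference $\vec{u}_k - \vec{u}$ (or $\vec{w}_k - \vec{w}$) and one factor that is uniformly bounded by Proposition \ref{prop:combined-estimates}. Strong convergence from \eqref{eqn:strong-convergence} then drives each piece to zero in $L^2(0,t;\LB^2(\Omega))$, while the uniform bounds provide the needed control on the bounded factor. The $L^\infty$ in time bounds on $\vec{v}_k$, $\vec{B}_k$ in $\HB^2(\Omega)$ and on $\vec{m}_k$ in $\HB^3(\Omega)$ pass to the limit (by weak-$\star$ lower semicontinuity), so $\vec{v}$, $\vec{B}$, $\vec{m}$ enjoy the same bounds.

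Concretely, for \eqref{eqn:convergence-velocity-1} I would write
\begin{equation*}
(\vec{v}_k\cdot\nabla)\vec{v}_k - (\vec{v}\cdot\nabla)\vec{v}
= \bigl((\vec{v}_k-\vec{v})\cdot\nabla\bigr)\vec{v}_k + (\vec{v}\cdot\nabla)(\vec{v}_k-\vec{v}),
\end{equation*}
and bound each piece in $\LB^2(\Omega)$ via H\"older, using $\HB^1(\Omega)\hookrightarrow\LB^6(\Omega)$ and $\HB^2(\Omega)\hookrightarrow\LB^\infty(\Omega)$: the first piece by $\|\vec{v}_k-\vec{v}\|_{\LB^6}\|\nabla\vec{v}_k\|_{\LB^3}$, the second by $\|\vec{v}\|_{\LB^\infty}\|\nabla(\vec{v}_k-\vec{v})\|_{\LB^2}$. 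Integrating in time and pulling out the bounded $L^\infty(0,T^*;\HB^2)$ factor on $\vec{v}_k$ (respectively $\vec{v}$) then leaves $\int_0^t\|\vec{v}_k-\vec{v}\|_{\HB^1}^2\,{\rm d}s \to 0$, which is exactly the strong convergence in \eqref{eqn:strong-convergence}. Terms \eqref{eqn:convergence-velocity-2} and \eqref{eqn:convergence-velocity-3} are handled in the same fashion: write
\begin{align*}
\curl\vec{B}_k\times\vec{B}_k - \curl\vec{B}\times\vec{B}
&= \curl(\vec{B}_k-\vec{B})\times\vec{B}_k + \curl\vec{B}\times(\vec{B}_k-\vec{B}),\\
(\vec{B}_k\cdot\nabla)\vec{m}_k - (\vec{B}\cdot\nabla)\vec{m}
&= \bigl((\vec{B}_k-\vec{B})\cdot\nabla\bigr)\vec{m}_k + (\vec{B}\cdot\nabla)(\vec{m}_k-\vec{m}),
\end{align*}
and apply H\"older with the same two pairs of exponents, absorbing the bounded factor into the $L^\infty(0,T^*;\HB^2)$ norm of $\vec{B}_k$, $\vec{B}$ or $\vec{m}_k$, and using the strong $L^2(0,T^*;\HB^1)$ convergence of $\vec{B}_k$.

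The principal difficulty is the last term \eqref{eqn:convergence-velocity-4}, which involves $\Delta\vec{m}_k$ and so cannot be handled by an $\LB^\infty\cdot\LB^2$ split (we do not have $\Delta\vec{m}_k$ uniformly bounded in $\LB^\infty$). Splitting
\begin{equation*}
(\nabla\vec{m}_k)^\top\Delta\vec{m}_k - (\nabla\vec{m})^\top\Delta\vec{m}
= \bigl(\nabla(\vec{m}_k-\vec{m})\bigr)^\top\Delta\vec{m}_k + (\nabla\vec{m})^\top\Delta(\vec{m}_k-\vec{m}),
\end{equation*}
I would bound the first piece in $\LB^2(\Omega)$ by $\|\nabla(\vec{m}_k-\vec{m})\|_{\LB^3}\|\Delta\vec{m}_k\|_{\LB^6}$ and the second by $\|\nabla\vec{m}\|_{\LB^\infty}\|\Delta(\vec{m}_k-\vec{m})\|_{\LB^2}$. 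The factor $\|\Delta\vec{m}_k\|_{\LB^6}\lesssim\|\vec{m}_k\|_{\HB^3}$ is uniformly bounded thanks to Proposition \ref{prop:combined-estimates}, and $\|\nabla\vec{m}\|_{\LB^\infty}\lesssim\|\vec{m}\|_{\HB^3}$ lifts to $\vec{m}$ by weak-$\star$ lower semicontinuity. The remaining factors $\|\nabla(\vec{m}_k-\vec{m})\|_{\LB^3}$ and $\|\Delta(\vec{m}_k-\vec{m})\|_{\LB^2}$ are both controlled by $\|\vec{m}_k-\vec{m}\|_{\HB^2}$ (using $\HB^1\hookrightarrow\LB^6\hookrightarrow\LB^3$), and the strong convergence $\vec{m}_k\to\vec{m}$ in $L^2(0,T^*;\HB^2(\Omega))$ from \eqref{eqn:strong-convergence} then yields the required $L^2(0,t;\LB^2(\Omega))$ convergence after integrating in time. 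All four convergences thus follow from the same Hölder-plus-strong-convergence scheme, calibrated in each case to the highest-order derivative that appears.
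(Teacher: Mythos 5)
Your proposal is correct and follows essentially the same route as the paper: split each nonlinear difference into two terms, each containing one difference factor, estimate by H\"older together with the Sobolev embeddings $\HB^1(\Omega)\hookrightarrow\LB^6(\Omega)$ and $\HB^2(\Omega)\hookrightarrow\LB^\infty(\Omega)$, use the uniform bounds from Proposition \ref{prop:combined-estimates} on the non-difference factor, and conclude with the strong convergences \eqref{eqn:strong-convergence}. The only difference is cosmetic (which factor carries the difference and the particular H\"older exponents), and your explicit treatment of \eqref{eqn:convergence-velocity-4} via $\lnorm{\Delta\vec{m}_k}{6}\lsim\hnorm{\vec{m}_k}{3}$ and the $L^2(0,T^*;\HB^2(\Omega))$ convergence of $\vec{m}_k$ fills in detail the paper dismisses with ``in the exact same manner.''
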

\begin{proof}
Using \eqref{eqn:weak-convergence} and \eqref{eqn:strong-convergence}, and the Sobolev embeddings $\HB^1(\Omega) \hookrightarrow \LB^6(\Omega) \hookrightarrow \LB^4(\Omega)$ and~$\HB^2(\Omega) \hookrightarrow \LB^{\infty}(\Omega)$, we obtain the limit \eqref{eqn:convergence-velocity-1} since
\begin{align*}
    &\intOt{\lnorm{(\vec{v}_k \cdot \nabla)\vec{v}_k - (\vec{v} \cdot \nabla)\vec{v}}{2}^2} \\
    &\qquad \lsim \intOt{\lnorm{(\vec{v}_k \cdot \nabla)(\vec{v}_k - \vec{v})}{2}^2} + \intOt{\lnorm{((\vec{v}_k - \vec{v}) \cdot \nabla)\vec{v}}{2}^2} \\
    &\qquad \lsim \intOt{\lnorm{\vec{v}_k}{\infty}^2 \lnorm{\nabla \vec{v}_k - \nabla \vec{v}}{2}^2} + \intOt{\lnorm{\nabla \vec{v}}{4}^2 \lnorm{\vec{v} - \vec{v}_k}{4}^2} \\
    &\qquad \lsim \intOt{\hnorm{\vec{v}_k - \vec{v}}{1}^2} \to 0.
\end{align*}
The limits \eqref{eqn:convergence-velocity-2}--\eqref{eqn:convergence-velocity-4} follow in the exact same manner, completing the proof.
\end{proof}

\begin{lemma} \label{lem:convergence-magnetic-field}
Assume that $k \to \infty$. The non-linear term in \eqref{eqn:galerkin-magnetic-field} converges to the limit
\begin{align}
    \curl(\vec{v}_k \times \vec{B}_k) &\to \curl(\vec{v} \times \vec{B}), \label{eqn:convergence-magnetic-field-1}
\end{align}
in the sense of $L^2(0,t; \LB^2(\Omega))$, for all $t \in [0,T^*]$.
\end{lemma}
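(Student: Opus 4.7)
The plan is to proceed in direct analogy with Lemma \ref{lem:convergence-velocity}, exploiting the fact that the identity \eqref{eqn:curl-cross-product}, combined with $\div \vec{v}_k = 0$ and $\div \vec{B}_k = 0$ (and likewise for the limits), reduces the cross-product curl to convective derivatives. Explicitly,
\begin{equation*}
    \curl(\vec{v}_k \times \vec{B}_k) = (\vec{B}_k \cdot \nabla)\vec{v}_k - (\vec{v}_k \cdot \nabla)\vec{B}_k,
\end{equation*}
and the same expression holds for the limiting pair $(\vec{v},\vec{B})$ since weak limits preserve the divergence-free constraint.

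First I would subtract and split each convective term by the usual add-and-subtract trick, writing
\begin{align*}
    (\vec{B}_k \cdot \nabla)\vec{v}_k - (\vec{B} \cdot \nabla)\vec{v} &= (\vec{B}_k \cdot \nabla)(\vec{v}_k - \vec{v}) + ((\vec{B}_k - \vec{B}) \cdot \nabla)\vec{v}, \\
    (\vec{v}_k \cdot \nabla)\vec{B}_k - (\vec{v} \cdot \nabla)\vec{B} &= (\vec{v}_k \cdot \nabla)(\vec{B}_k - \vec{B}) + ((\vec{v}_k - \vec{v}) \cdot \nabla)\vec{B}.
\end{align*}
Then I would bound each piece in $L^2(0,t; \LB^2(\Omega))$ by H\"older's inequality, using $\HB^2(\Omega) \hookrightarrow \LB^\infty(\Omega)$ on the factor that is uniformly bounded (say $\vec{v}_k$ or $\vec{B}_k$ in $L^\infty(0,T^*;\HB^2(\Omega))$, by Proposition \ref{prop:combined-estimates}) and the strong convergence from \eqref{eqn:strong-convergence} on the difference. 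For example,
\begin{equation*}
    \intOt{\lnorm{(\vec{B}_k \cdot \nabla)(\vec{v}_k - \vec{v})}{2}^2} \lsim \lnormt{\vec{B}_k}{\infty}^2 \intOt{\hnorm{\vec{v}_k - \vec{v}}{1}^2} \to 0,
\end{equation*}
and the other three pieces are handled identically, alternately invoking $\HB^1(\Omega) \hookrightarrow \LB^4(\Omega)$ together with $\LB^4 \cdot \LB^4 \hookrightarrow \LB^2$ when both factors carry derivatives on the limiting side.

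There is no substantive obstacle here: the argument is word-for-word the template used in Lemma \ref{lem:convergence-velocity}, since the two convective terms appearing in the expansion of $\curl(\vec{v}_k \times \vec{B}_k)$ are of the same structural form as $(\vec{v}_k \cdot \nabla)\vec{v}_k$ and $(\vec{B}_k \cdot \nabla)\vec{m}_k$ already treated. The only minor point worth stating explicitly is that $\div \vec{v} = 0$ and $\div \vec{B} = 0$ in the distributional sense pass to the limit from their Galerkin counterparts, which is immediate from the weak convergence in $L^2(0,T^*; \HB^1(\Omega))$, so that the reduction of $\curl(\vec{v} \times \vec{B})$ to $(\vec{B} \cdot \nabla)\vec{v} - (\vec{v} \cdot \nabla)\vec{B}$ is legitimate.
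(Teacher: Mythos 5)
Your proposal is correct and follows essentially the same route as the paper: both split the difference by adding and subtracting, use the identity \eqref{eqn:curl-cross-product} to reduce $\curl(\vec{v}_k\times\vec{B}_k)$ to convective-derivative terms, bound the uniformly bounded factor via Sobolev embedding and the difference via the strong convergence \eqref{eqn:strong-convergence}. The only cosmetic difference is that you expand the curl before splitting (using divergence-freeness of the limits, which you rightly justify), whereas the paper splits the cross products first and then estimates each curl term.
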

\begin{proof}
Using \eqref{eqn:curl-cross-product}, the convergences \eqref{eqn:weak-convergence} and \eqref{eqn:strong-convergence},
\begin{align*}
    &\intOt{\lnorm{\curl(\vec{v}_k \times \vec{B}_k) - \curl(\vec{v} \times \vec{B})}{2}^2} \\
    &\qquad \lsim \intOt{\lnorm{\curl((\vec{v}_k  - \vec{v}) \times  \vec{B}_k)}{2}^2} + \intOt{\lnorm{\curl(\vec{v} \times ( \vec{B}_k -  \vec{B}))}{2}^2} \\
    &\qquad \lsim \intOt{\lnorm{\nabla \vec{v}_k - \nabla \vec{v}}{2}^2 \lnorm{\vec{B}_k}{2}^2} + \intOt{\lnorm{ \vec{v}_k - \vec{v}}{2}^2 \lnorm{\nabla \vec{B}_k}{2}^2}\\
    &\qquad \qquad + \intOt{\lnorm{\nabla \vec{v}}{2}^2 \lnorm{ \vec{B}_k -   \vec{B}}{2}^2} + \intOt{\lnorm{\vec{v}}{2}^2 \lnorm{\nabla \vec{B}_k -  \nabla \vec{B}}{2}^2}  \\
    &\qquad \lsim \intOt{\hnorm{\vec{v}_k - \vec{v}}{1}^2} + \intOt{\hnorm{\vec{B}_k - \vec{B}}{1}^2} \to 0.
\end{align*}
This completes the proof.
\end{proof}

\begin{lemma} \label{lem:convergence-magnetisation}
Assume that $k \to \infty$. The non-linear terms in \eqref{eqn:galerkin-magnetisation} converge to the limit
\begin{align}
    (\vec{v}_k \cdot \nabla)\vec{m}_k &\to (\vec{v} \cdot \nabla)\vec{m}, \label{eqn:convergence-magnetisation-1} \\[2ex]
    |\nabla \vec{m}_k|^2 \vec{m}_k &\to |\nabla \vec{m}|^2\vec{m}, \label{eqn:convergence-magnetisation-2} \\[2ex]
    \vec{m}_k \times \Delta \vec{m}_k &\to \vec{m} \times \Delta \vec{m}, \label{eqn:convergence-magnetisation-3} \\[2ex]
    \vec{m}_k \times \vec{B}_k &\to \vec{m} \times \vec{B}, \label{eqn:convergence-magnetisation-4} \\[2ex]
    \vec{m}_k \times (\vec{m}_k \times \vec{B}_k) &\to \vec{m} \times (\vec{m} \times \vec{B}), \label{eqn:convergence-magnetisation-5}
\end{align}
in the sense of $L^2(0,t; \LB^2(\Omega))$, for all $t \in [0,T^*]$.
\end{lemma}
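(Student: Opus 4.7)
The plan is to follow the same add-and-subtract strategy as in Lemmas \ref{lem:convergence-velocity} and \ref{lem:convergence-magnetic-field}, exploiting (i) the strong convergences \eqref{eqn:strong-convergence}, in particular $\vec{m}_k \to \vec{m}$ in $L^2(0,T^*; \HB^2(\Omega))$, which upgrades to strong convergence of $\Delta \vec{m}_k$ in $L^2(0,T^*;\LB^2(\Omega))$ and of $\nabla \vec{m}_k$ in $L^2(0,T^*;\HB^1(\Omega))$, and (ii) the uniform bounds from Proposition \ref{prop:combined-estimates}, which together with $\HB^2(\Omega)\hookrightarrow\LB^\infty(\Omega)$ and $\HB^3(\Omega)\hookrightarrow\W^{1,\infty}(\Omega)$ yield uniform (in $k$ and $t$) control of $\|\vec{v}_k\|_\infty$, $\|\vec{B}_k\|_\infty$, $\|\vec{m}_k\|_\infty$ and $\|\nabla\vec{m}_k\|_\infty$.

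For the convective term \eqref{eqn:convergence-magnetisation-1}, I would split $(\vec{v}_k\cdot\nabla)\vec{m}_k-(\vec{v}\cdot\nabla)\vec{m}=((\vec{v}_k-\vec{v})\cdot\nabla)\vec{m}_k+(\vec{v}\cdot\nabla)(\vec{m}_k-\vec{m})$ and bound the two pieces in $\LB^2(\Omega)$ by $\|\vec{v}_k-\vec{v}\|_2\|\nabla\vec{m}_k\|_\infty$ and $\|\vec{v}\|_\infty\|\nabla(\vec{m}_k-\vec{m})\|_2$, respectively; both terms go to $0$ after time integration via \eqref{eqn:strong-convergence}. For the cross-product terms \eqref{eqn:convergence-magnetisation-4}, \eqref{eqn:convergence-magnetisation-5}, the same pattern works: split $\vec{m}_k\times\vec{B}_k-\vec{m}\times\vec{B}=(\vec{m}_k-\vec{m})\times\vec{B}_k+\vec{m}\times(\vec{B}_k-\vec{B})$, bound each factor by an $L^\infty$ norm times an $L^2$ norm, and conclude via strong $L^2(0,T^*;\HB^1)$ convergence of $\vec{B}_k$ and strong $L^2(0,T^*;\HB^2)$ convergence of $\vec{m}_k$. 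The double cross-product \eqref{eqn:convergence-magnetisation-5} is handled by a three-piece telescoping, each piece estimated identically.

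The genuinely non-linear steps are \eqref{eqn:convergence-magnetisation-2} and \eqref{eqn:convergence-magnetisation-3}, which I expect to be the main obstacle. For \eqref{eqn:convergence-magnetisation-3}, I write
\begin{equation*}
    \vec{m}_k\times\Delta\vec{m}_k-\vec{m}\times\Delta\vec{m} = (\vec{m}_k-\vec{m})\times\Delta\vec{m}_k+\vec{m}\times(\Delta\vec{m}_k-\Delta\vec{m}),
\end{equation*}
and use $\|(\vec{m}_k-\vec{m})\times\Delta\vec{m}_k\|_2\lsim \|\vec{m}_k-\vec{m}\|_\infty\|\Delta\vec{m}_k\|_2\lsim\hnorm{\vec{m}_k-\vec{m}}{2}$ (uniform bound on $\|\Delta\vec{m}_k\|_2$), and $\|\vec{m}\times(\Delta\vec{m}_k-\Delta\vec{m})\|_2\lsim\|\vec{m}\|_\infty\|\Delta\vec{m}_k-\Delta\vec{m}\|_2\lsim\hnorm{\vec{m}_k-\vec{m}}{2}$. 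For \eqref{eqn:convergence-magnetisation-2} I decompose
\begin{equation*}
    |\nabla\vec{m}_k|^2\vec{m}_k-|\nabla\vec{m}|^2\vec{m} = \big(\nabla(\vec{m}_k-\vec{m}):(\nabla\vec{m}_k+\nabla\vec{m})\big)\vec{m}_k+|\nabla\vec{m}|^2(\vec{m}_k-\vec{m}),
\end{equation*}
then bound the first factor in $\LB^2(\Omega)$ by $(\|\nabla\vec{m}_k\|_\infty+\|\nabla\vec{m}\|_\infty)\|\nabla(\vec{m}_k-\vec{m})\|_2\|\vec{m}_k\|_\infty$ and the second by $\|\nabla\vec{m}\|_\infty^2\|\vec{m}_k-\vec{m}\|_2$. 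All the $L^\infty$ factors are uniformly bounded in $t$ and $k$ by Proposition \ref{prop:combined-estimates} and the embedding $\HB^3\hookrightarrow\W^{1,\infty}$, while the remaining $L^2$-in-space factors are controlled by $\hnorm{\vec{m}_k-\vec{m}}{2}$; integrating over $[0,t]$ and invoking \eqref{eqn:strong-convergence} yields convergence to zero and completes the proof.
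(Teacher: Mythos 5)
Your proposal is correct and follows essentially the same route as the paper: telescoping (add-and-subtract) decompositions of each non-linear term, Hölder estimates using the uniform bounds of Proposition \ref{prop:combined-estimates} together with Sobolev embeddings, and the strong convergences \eqref{eqn:strong-convergence} after integration in time. The only cosmetic difference is that for \eqref{eqn:convergence-magnetisation-2} you place the $L^\infty$ control (via $\HB^3(\Omega)\hookrightarrow\W^{1,\infty}(\Omega)$) on the gradient factors, whereas the paper distributes the difference differently and works with $\LB^4$ norms via $\HB^1(\Omega)\hookrightarrow\LB^4(\Omega)$; both are valid instances of the same argument.
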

\begin{proof}
The limit \eqref{eqn:convergence-magnetisation-1} follows in the same manner as \eqref{eqn:convergence-velocity-1} in Lemma \ref{lem:convergence-velocity}. For \eqref{eqn:convergence-magnetisation-2} we use the Sobolev embedding $\HB^1(\Omega) \hookrightarrow \LB^6(\Omega) \hookrightarrow \LB^4(\Omega)$ to obtain
\begin{align*}
    &\intOt{\lnorm{|\nabla \vec{m}_k|^2 \vec{m}_k - |\nabla \vec{m}|^2 \vec{m}}{2}^2} \\
    &\qquad \lsim \intOt{\lnorm{|\nabla \vec{m}_k|^2 (\vec{m}_k - \vec{m})}{2}^2} + \intOt{\lnorm{(|\nabla \vec{m}_k|^2 - |\nabla \vec{m}|^2) \vec{m}}{2}^2}\\
    &\qquad \lsim \intOt{\lnorm{\nabla \vec{m}_k}{4}^4 \lnorm{\vec{m}_k - \vec{m}}{2}^2} \\
    &\qquad \qquad + \intOt{\lnorm{\nabla \vec{m}_k - \nabla \vec{m}}{2}^2 \lnorm{\nabla \vec{m}_k + \nabla \vec{m}}{4}^2 \lnorm{\vec{m}}{4}^2} \\
    &\qquad \lsim \intOt{\hnorm{\vec{m}_k - \vec{m}}{2}^2} \to 0.
\end{align*}
The limits \eqref{eqn:convergence-magnetisation-3}--\eqref{eqn:convergence-magnetisation-5} follow in the same manner as \eqref{eqn:convergence-velocity-2}. This completes the proof. 
\end{proof}

The convergence of the non-linear terms are now proved in the context of the weak formulation as a result of the previous lemmas.

\begin{lemma} \label{lem:convergence-weak-formulation}
Let $\{(\vec{\phi}_k, \vec{\chi}_k, \vec{\psi}_k)\}$ be a sequence in $\V_k \times \XB_k \times \YB_k$ converging to $(\vec{\phi}, \vec{\chi}, \vec{\psi})$ in $\HB^2(\Omega)$. For all $t \in [0,T^*]$ the non-linear terms in the weak formulation for the velocity \eqref{eqn:galerkin-velocity} converge to
\begin{align}
    \lim_{k \to \infty} \intOt{\liprod{(\vec{v}_k \cdot \nabla) \vec{v}_k}{\boldsymbol{\phi}_k}} &= \intOt{\liprod{(\vec{v} \cdot \nabla)\vec{v}}{\boldsymbol{\phi}}}, \label{eqn:convergence-weak-formulation-velocity-1} \\
    \lim_{k \to \infty} \intOt{\liprod{\curl \vec{B}_k \times \vec{B}_k}{\boldsymbol{\phi}_k}} &= \intOt{\liprod{\curl \vec{B} \times \vec{B}}{\boldsymbol{\phi}}}, \label{eqn:convergence-weak-formulation-velocity-2} \\
    \lim_{k \to \infty} \intOt{\liprod{(\vec{B}_k \cdot \nabla) \vec{m}_k}{\boldsymbol{\phi}_k}} &= \intOt{\liprod{(\vec{B} \cdot \nabla)\vec{m}}{\boldsymbol{\phi}}}, \label{eqn:convergence-weak-formulation-velocity-3} \\
    \lim_{k \to \infty} \intOt{\liprod{(\nabla \vec{m}_k)^{\top} \Delta \vec{m}_k}{\vec{\phi}_k}} &= \intOt{\liprod{(\nabla \vec{m})^{\top} \Delta \vec{m}}{\vec{\phi}}}, \label{eqn:convergence-weak-formulation-velocity-4}
\end{align}
while the non-linear terms in the weak formulation for the magnetic field \eqref{eqn:galerkin-magnetic-field} converge to
\begin{align}
    \lim_{k \to \infty} \intOt{\liprod{\curl(\vec{v}_k \times \vec{B}_k)}{\boldsymbol{\chi}_k}} &= \intOt{\liprod{\curl(\vec{v} \times \vec{B})}{\boldsymbol{\chi}}}, \label{eqn:convergence-weak-formulation-magnetic-field-1}
\end{align}
and lastly the non-linear terms in the weak formulation for the magnetisation \eqref{eqn:galerkin-magnetisation} converge to
\begin{align}
    \lim_{k \to \infty} \intOt{\liprod{(\vec{v}_k \cdot \nabla) \vec{m}_k}{\boldsymbol{\psi}_k}} &= \intOt{\liprod{(\vec{v} \cdot \nabla)\vec{m}}{\boldsymbol{\psi}}}, \label{eqn:convergence-weak-formulation-magnetisation-1} \\
    \lim_{k \to \infty} \intOt{\liprod{|\nabla \vec{m}_k|^2 \vec{m}_k}{\boldsymbol{\psi}_k}} &= \intOt{\liprod{|\nabla \vec{m}|^2 \vec{m}}{\boldsymbol{\psi}}}, \label{eqn:convergence-weak-formulation-magnetisation-2} \\
    \lim_{k \to \infty} \intOt{\liprod{\vec{m}_k \times \nabla \vec{m}_k}{\nabla \boldsymbol{\psi}_k}} &= \intOt{\liprod{\vec{m} \times \nabla \vec{m}}{\nabla \boldsymbol{\psi}}}, \label{eqn:convergence-weak-formulation-magnetisation-3} \\
    \lim_{k \to \infty} \intOt{\liprod{\vec{m}_k \times \vec{B}_k}{\boldsymbol{\psi}_k}} &= \intOt{\liprod{\vec{m} \times \vec{B}}{\boldsymbol{\psi}}}, \label{eqn:convergence-weak-formulation-magnetisation-4} \\
    \lim_{k \to \infty} \intOt{\liprod{\vec{m}_k \times (\vec{m}_k \times \vec{B}_k)}{\boldsymbol{\psi}_k}} &= \intOt{\liprod{\vec{m} \times (\vec{m} \times \vec{B})}{\boldsymbol{\psi}}}. \label{eqn:convergence-weak-formulation-magnetisation-5}
\end{align}
\end{lemma}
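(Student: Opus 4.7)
The plan is to exploit the strong $L^2(0,t; \LB^2(\Omega))$ convergences already established in Lemmas \ref{lem:convergence-velocity}, \ref{lem:convergence-magnetic-field}, and \ref{lem:convergence-magnetisation}, combined with the strong $\HB^2(\Omega)$ convergence of the test-function sequence and the Sobolev embedding $\HB^2(\Omega) \hookrightarrow \LB^\infty(\Omega)$. For each pair consisting of a nonlinear term $F(\vec{v}_k,\vec{B}_k,\vec{m}_k)$ and a corresponding test function $\boldsymbol{\zeta}_k \in \{\boldsymbol{\phi}_k, \boldsymbol{\chi}_k, \boldsymbol{\psi}_k, \nabla\boldsymbol{\psi}_k\}$, I would use the standard add-and-subtract decomposition
\begin{equation*}
    \intOt{\liprod{F_k}{\boldsymbol{\zeta}_k}} - \intOt{\liprod{F}{\boldsymbol{\zeta}}}
    = \intOt{\liprod{F_k - F}{\boldsymbol{\zeta}_k}} + \intOt{\liprod{F}{\boldsymbol{\zeta}_k - \boldsymbol{\zeta}}},
\end{equation*}
where $F_k$ and $F$ denote a nonlinearity evaluated at the Galerkin solution and at the limit, respectively.

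For the first summand, Cauchy--Schwarz in space and time, together with the uniform bound $\sup_k \|\boldsymbol{\zeta}_k\|_{\LB^2(\Omega)} \lsim \sup_k \|\boldsymbol{\zeta}_k\|_{\HB^2(\Omega)} < \infty$ (which follows from the $\HB^2$-convergence of the test functions) and the limits furnished by Lemmas \ref{lem:convergence-velocity}--\ref{lem:convergence-magnetisation}, yield the bound
\begin{equation*}
    \left|\intOt{\liprod{F_k - F}{\boldsymbol{\zeta}_k}}\right| \lsim \|\boldsymbol{\zeta}_k\|_{\LB^2(\Omega)} \sqrt{t} \, \|F_k - F\|_{L^2(0,t;\LB^2(\Omega))} \to 0.
\end{equation*}
For the second summand, I would use the uniform boundedness of $F$ in $L^2(0,t; \LB^2(\Omega))$---a direct consequence of Proposition \ref{prop:combined-estimates}, the Sobolev embedding $\HB^2(\Omega) \hookrightarrow \LB^\infty(\Omega)$, and the bounds already extracted in Lemmas \ref{lem:first-order-estimate-1}--\ref{lem:second-order-estimate-magnetisation}---together with the strong convergence $\boldsymbol{\zeta}_k \to \boldsymbol{\zeta}$ in $\LB^2(\Omega)$.

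The only term not directly covered by Lemmas \ref{lem:convergence-velocity}--\ref{lem:convergence-magnetisation} is \eqref{eqn:convergence-weak-formulation-magnetisation-3}, since $\vec{m}_k \times \nabla \vec{m}_k$ is paired with $\nabla \boldsymbol{\psi}_k$ rather than $\boldsymbol{\psi}_k$. For this, I would insert the splitting
\begin{equation*}
    \vec{m}_k \times \nabla \vec{m}_k - \vec{m} \times \nabla \vec{m} = (\vec{m}_k - \vec{m}) \times \nabla \vec{m}_k + \vec{m} \times (\nabla \vec{m}_k - \nabla \vec{m}),
\end{equation*}
then apply Hölder's inequality, the embedding $\HB^2(\Omega) \hookrightarrow \LB^\infty(\Omega)$, and the strong convergence $\vec{m}_k \to \vec{m}$ in $L^2(0,T^*; \HB^2(\Omega))$ from \eqref{eqn:strong-convergence} to conclude $\|\vec{m}_k \times \nabla \vec{m}_k - \vec{m} \times \nabla \vec{m}\|_{L^2(0,t;\LB^2(\Omega))} \to 0$. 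The rest of the argument then proceeds exactly as for the other terms, with the role of $\boldsymbol{\zeta}_k$ played by $\nabla \boldsymbol{\psi}_k$, whose uniform $\LB^2$-bound comes from $\|\boldsymbol{\psi}_k\|_{\HB^2(\Omega)}$.

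The analysis is almost entirely mechanical once Lemmas \ref{lem:convergence-velocity}--\ref{lem:convergence-magnetisation} are in hand; there is no genuine analytic obstacle. The only point requiring organisational care is tracking which embedding is invoked at each step---for the quadratic terms $\|F(u)\|_{\LB^2}$ is dominated by $\HB^1$ or $\HB^2$ norms via $\HB^2 \hookrightarrow \LB^\infty$ and $\HB^1 \hookrightarrow \LB^6 \hookrightarrow \LB^4$, and these are already uniformly bounded on $[0,T^*]$ by Proposition \ref{prop:combined-estimates}, so the second summand integrand is controlled uniformly in $k$ and the dominated convergence argument goes through.
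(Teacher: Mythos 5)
Your proposal is correct and follows essentially the same route as the paper's (very terse) proof: combine the strong $L^2(0,t;\LB^2(\Omega))$ convergence of the nonlinearities from Lemmas \ref{lem:convergence-velocity}--\ref{lem:convergence-magnetisation} with the $\HB^2(\Omega)$ convergence of the test functions via an add-and-subtract argument. Your separate treatment of the term $\vec{m}_k \times \nabla \vec{m}_k$ paired with $\nabla \boldsymbol{\psi}_k$ is a welcome extra detail, since Lemma \ref{lem:convergence-magnetisation} only addresses $\vec{m}_k \times \Delta \vec{m}_k$ and the paper leaves that adaptation implicit.
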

\begin{proof}
The terms \eqref{eqn:convergence-weak-formulation-velocity-1}--\eqref{eqn:convergence-weak-formulation-velocity-4} follow from Lemma \ref{lem:convergence-velocity} and the convergence $\vec{\phi}_k \to \vec{\phi}$ in $\HB^2(\Omega)$. The terms \eqref{eqn:convergence-weak-formulation-magnetic-field-1} and \eqref{eqn:convergence-weak-formulation-magnetisation-1}--\eqref{eqn:convergence-weak-formulation-magnetisation-5} follow in a similar manner by Lemma \ref{lem:convergence-magnetic-field} and Lemma \ref{lem:convergence-magnetisation}, respectively.
\end{proof}

Having passed to the limit, we show that the limiting magnetisation term $\vec{m}$ conserves the value of $|\vec{m}|$, in agreement with Proposition \ref{prop:implicit-unit-magnitude}.

\begin{proposition} \label{prop:unit-magnitude}
The magnetisation $\vec{m}$ satisfies $|\vec{m}| = 1$ almost everywhere on $[0,T^*] \times \Omega$.
\end{proposition}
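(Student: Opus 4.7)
The plan is to imitate the argument of Proposition \ref{prop:implicit-unit-magnitude}, but for the transformed equation \eqref{eqn:fmhd-equiv e}. The regularity of the limit obtained via \eqref{eqn:weak-convergence}--\eqref{eqn:strong-convergence} (namely $\vec{v} \in L^{\infty}(0,T^*; \HB^2(\Omega))$, $\vec{B} \in L^{\infty}(0,T^*; \HB^2(\Omega))$, $\vec{m} \in L^{\infty}(0,T^*; \HB^3(\Omega))$ and $\partial_t \vec{m} \in L^{\infty}(0,T^*; \HB^1(\Omega))$) together with the convergence of the nonlinear terms established in Lemmas \ref{lem:convergence-velocity}--\ref{lem:convergence-weak-formulation} ensures that \eqref{eqn:fmhd-equiv e} holds in $\LB^2(\Omega)$ for almost every $t \in [0,T^*]$. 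This is the starting point that makes the pointwise algebraic manipulations below admissible.

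Next, I would take the pointwise dot product of \eqref{eqn:fmhd-equiv e} with $\vec{m}$. The two cross-product terms on the right-hand side vanish identically because $\vec{m} \cdot (\vec{m} \times \vec{w}) = 0$ for any $\vec{w}$. Using the standard identity \eqref{eqn:laplace-magnitude}, that is $2\,\vec{m} \cdot \Delta \vec{m} = \Delta |\vec{m}|^2 - 2 |\nabla \vec{m}|^2$, and setting $b := |\vec{m}|^2 - 1$, a short rearrangement yields the linear parabolic equation
\begin{equation*}
    \partial_t b + (\vec{v} \cdot \nabla) b - \Delta b = 2 |\nabla \vec{m}|^2 \, b \qquad \text{on } (0,T^*) \times \Omega,
\end{equation*}
subject to the Neumann boundary condition $\partial b / \partial \vec{n} = 2 \vec{m} \cdot \partial \vec{m}/\partial \vec{n} = 0$ on $\partial\Omega$ (inherited from \eqref{eqn:boundary-conditions d}) and the initial condition $b(0, \cdot) = |\vec{m}_0|^2 - 1 = 0$.

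Now I would test this equation against $b$ in $\LB^2(\Omega)$. Integration by parts on $-\Delta b$ produces $\lnorm{\nabla b}{2}^2$, with the boundary term vanishing due to the Neumann condition. For the convective term, integration by parts gives $\liprod{(\vec{v} \cdot \nabla) b}{b} = -\tfrac{1}{2} \liprod{\div \vec{v}}{b^2} + \tfrac{1}{2}\langle b^2, \vec{v} \cdot \vec{n}\rangle_{\LB^2(\partial\Omega)}$, both of which vanish since $\div \vec{v} = 0$ and $\vec{v} = \vec{0}$ on $\partial\Omega$. Hence
\begin{equation*}
    \frac{1}{2} \ddt \lnorm{b}{2}^2 + \lnorm{\nabla b}{2}^2 = 2 \intomega{|\nabla \vec{m}|^2 b^2} \lsim \lnorm{\nabla \vec{m}}{\infty}^2 \lnorm{b}{2}^2.
\end{equation*}
Since $\vec{m} \in L^{\infty}(0,T^*; \HB^3(\Omega))$ and $\HB^2(\Omega) \hookrightarrow \LB^{\infty}(\Omega)$, the coefficient $\lnorm{\nabla \vec{m}}{\infty}^2$ is bounded in $L^{\infty}(0,T^*)$, so a direct application of Gronwall's inequality together with $b(0,\cdot) = 0$ yields $\lnorm{b(t,\cdot)}{2} = 0$ for every $t \in [0,T^*]$, whence $|\vec{m}| = 1$ a.e.\ on $[0,T^*] \times \Omega$.

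The only real obstacle is justifying that \eqref{eqn:fmhd-equiv e} holds in a sufficiently strong sense to legitimise the pointwise multiplication by $\vec{m}$; once this is granted, the rest is a routine energy argument. Given the $L^{\infty}$-in-time $\HB^3$ regularity of $\vec{m}$ and the $L^{\infty}$-in-time $\HB^1$ regularity of $\partial_t \vec{m}$, every term in the equation lies in $\LB^2(\Omega)$ for a.e.\ $t$, so the manipulation is valid and no additional mollification is required.
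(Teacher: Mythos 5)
Your proposal is correct and follows essentially the same route as the paper: dot the strong form of \eqref{eqn:fmhd-equiv e} with $\vec{m}$, use \eqref{eqn:laplace-magnitude} to obtain the parabolic equation $\partial_t b + (\vec{v}\cdot\nabla)b - \Delta b = 2|\nabla\vec{m}|^2 b$ for $b = |\vec{m}|^2 - 1$ with homogeneous Neumann data, and close with an $\LB^2$ energy estimate using $\div\vec{v}=0$, the bound $\lnorm{\nabla\vec{m}}{\infty}\lsim\hnorm{\vec{m}}{3}$, and Gronwall. The only cosmetic difference is that you spell out the boundary-term cancellations and the justification that the equation holds a.e., which the paper handles by appealing to the regularity in \eqref{eqn:weak-convergence} and the recovery of the strong form after Lemma \ref{lem:convergence-weak-formulation}.
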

\begin{proof}
From Lemma \ref{lem:convergence-weak-formulation}, we noted that $(\vec{v}, \vec{B}, \vec{m})$ satisfies the PDEs in \eqref{eqn:fmhd-equiv} almost everywhere. Taking the dot product of
\begin{equation*}
    \frac{\partial \vec{m}}{\partial t} + (\vec{v} \cdot \nabla) \vec{m} = \Delta \vec{m} + |\nabla \vec{m}|^2 \vec{m} + \vec{m} \times (\Delta \vec{m} + \vec{B}) + \vec{m} \times (\vec{m} \times \vec{B})
\end{equation*}
with $\vec{m}$, we obtain
\begin{equation*}
    \frac{1}{2} \frac{\partial}{\partial t} |\vec{m}|^2 + \frac{1}{2}(\vec{v} \cdot \nabla) |\vec{m}|^2 = (\Delta \vec{m} \cdot \vec{m}) + |\nabla \vec{m}|^2 |\vec{m}|^2.
\end{equation*}
The identity \eqref{eqn:laplace-magnitude} implies that
\begin{equation*}
    \frac{\partial}{\partial t} |\vec{m}|^2 + (\vec{v} \cdot \nabla) |\vec{m}|^2 - \Delta |\vec{m}|^2 = 2|\nabla \vec{m}|^2( |\vec{m}|^2 - 1).
\end{equation*}
Setting $b = |\vec{m}|^2 - 1$ we see that $b(t,\vec{x})$ satisfies the PDE
\begin{equation*}
    \frac{\partial b}{\partial t} + (\vec{v} \cdot \nabla) b - \Delta b = 2|\nabla \vec{m}|^2 b,
\end{equation*}
subject to the Neumann boundary condition $\partial_{\vec{n}} b = 0$ and the initial condition $b(0,\cdot) = |\vec{m}_0|^2 - 1 = 0$. Taking the $L^2(\Omega)$ inner-product with $b$, noting $\div \vec{v} = 0$ and using the Sobolev embedding $\HB^2(\Omega) \hookrightarrow \LB^{\infty}(\Omega)$ we have
\begin{align*}
    \frac{1}{2} \ddt \lnorm{b}{2}^2 + \lnorm{\nabla b}{2}^2 &\lsim \lnorm{\nabla \vec{m}}{\infty}^2 \lnorm{b}{2}^2 \\
    &\lsim \hnorm{\vec{m}}{3}^2 \lnorm{b}{2}^2 \\
    &\lsim \lnorm{b}{2}^2,
\end{align*}
where the latter inequality comes from \eqref{eqn:weak-convergence}. Therefore, by the Gronwall Lemma we conclude that
\begin{equation*}
    \lnorm{b(t,\cdot)}{2}^2 \leq \lnorm{b(0,\cdot)}{2}^2 \exp\left(C T^*\right),
\end{equation*}
i.e., $\lnorm{b(t,\cdot)}{2} = 0$. This ends the proof.
\end{proof}

From Lemma \ref{lem:convergence-weak-formulation} and \eqref{eqn:weak-convergence} by passing to the limit in \eqref{eqn:galerkin-velocity} and using integration by parts, we obtain
\begin{align*}
    \liprod{\partial_t \vec{v} + (\vec{v} \cdot \nabla) \vec{v} - \Delta \vec{v} - \curl \vec{B} \times \vec{B} - (\vec{B} \cdot \nabla) \vec{m} + (\nabla \vec{m})^{\top} \Delta \vec{m}}{\vec{\varphi}} = 0,
\end{align*}
for all $\vec{\varphi} \in \HB_{0,\div}^1(\Omega)$. By using \cite[Proposition 1.1]{temam2001-book}, which states that a distribution $\vec{f}$ satisfying
\begin{equation*}
    \vec{f} = \nabla g, 
\end{equation*}
for some distribution $g$ if and only if
\begin{equation*}
    \liprod{\vec{f}}{\vec{\chi}} = 0, \text{ for all } \vec{\chi} \in \mathbb{C}_c^{\infty}(\Omega) \text{ such that } \div \vec{\chi} = 0,
\end{equation*}
we can recover pressure term and the additional gradient terms present in \eqref{eqn:fmhd-equiv}. We thus conclude that $(\vec{v}, \vec{B}, \vec{m})$ satisfy the PDEs in \eqref{eqn:fmhd-equiv} almost everywhere. We now use these facts to prove the next two results.

\begin{proposition} \label{prop:W26-estimate}
The limiting velocity and magnetic field $\vec{v}$ and $\vec{B}$ satisfy
\begin{equation*}
    \intOt{\left(\wnorm{\vec{v}(s)}{2}{6}^2 + \wnorm{\vec{B}(s)}{2}{6}^2\right)} \lsim 1
\end{equation*}
for any $t \in [0,T^*]$.
\end{proposition}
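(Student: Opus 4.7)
The plan is to treat the first and third equations of \eqref{eqn:fmhd-equiv} as elliptic problems for $\vec{v}$ and $\vec{B}$ at almost every time, using the fact (established just above the proposition) that $(\vec{v}, \vec{B}, \vec{m})$ satisfies the PDEs in \eqref{eqn:fmhd-equiv} almost everywhere. The key ingredient will be $\W^{2,6}$ regularity for the Stokes operator (with no-slip boundary condition) and for the curl--curl operator under the boundary conditions \eqref{eqn:boundary-conditions b}--\eqref{eqn:boundary-conditions c}, applied pointwise in time.

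First I would rewrite \eqref{eqn:fmhd-equiv a} in the form
\begin{equation*}
    -\Delta \vec{v} + \nabla \tilde{p} = \vec{F}, \qquad \div \vec{v} = 0, \qquad \vec{v}|_{\partial\Omega} = \vec{0},
\end{equation*}
where $\tilde{p}$ absorbs $p$ together with the gradient terms on the right-hand side of \eqref{eqn:fmhd-equiv a}, and
\begin{equation*}
    \vec{F} = -\partial_t \vec{v} - (\vec{v} \cdot \nabla)\vec{v} + \curl \vec{B} \times \vec{B} + (\vec{B} \cdot \nabla)\vec{m} - (\nabla \vec{m})^{\top} \Delta \vec{m}.
\end{equation*}
By standard $\LB^p$ regularity for the Stokes problem, $\lnorm{\vec{v}}{\W^{2,6}} \lsim \lnorm{\vec{F}}{6}$, so it suffices to bound each term of $\vec{F}$ in $L^2(0,T^*; \LB^6(\Omega))$. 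Using the Sobolev embeddings $\HB^1(\Omega) \hookrightarrow \LB^6(\Omega)$ and $\HB^2(\Omega) \hookrightarrow \LB^{\infty}(\Omega) \cap \W^{1,6}(\Omega)$, together with Proposition \ref{prop:combined-estimates} and the weak-$\star$ convergences in \eqref{eqn:weak-convergence}, one obtains
\begin{align*}
    \intOt{\lnorm{\partial_s \vec{v}}{6}^2} &\lsim \intOt{\hnorm{\partial_s \vec{v}}{1}^2} \lsim 1, \\
    \intOt{\lnorm{(\vec{v} \cdot \nabla)\vec{v}}{6}^2} &\lsim \intOt{\lnorm{\vec{v}}{\infty}^2 \lnorm{\nabla \vec{v}}{6}^2} \lsim T^* \esssup_{[0,T^*]} \hnorm{\vec{v}}{2}^4 \lsim 1,
\end{align*}
and similarly for $\curl \vec{B} \times \vec{B}$, $(\vec{B} \cdot \nabla)\vec{m}$ and $(\nabla \vec{m})^{\top} \Delta \vec{m}$ (the last one using $\nabla \vec{m} \in L^{\infty}(0,T^*; \LB^{\infty}(\Omega))$ via $\HB^3(\Omega) \hookrightarrow \W^{1,\infty}(\Omega)$ and $\Delta \vec{m} \in L^{\infty}(0,T^*; \LB^6(\Omega))$ via $\HB^2(\Omega) \hookrightarrow \LB^6(\Omega)$).

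For $\vec{B}$, I would rewrite \eqref{eqn:fmhd-equiv c} as
\begin{equation*}
    \curl^2 \vec{B} = -\partial_t \vec{B} + \curl(\vec{v} \times \vec{B}) =: \vec{G},
\end{equation*}
with the boundary conditions $\vec{B} \cdot \vec{n} = 0$ and $\curl \vec{B} \times \vec{n} = \vec{0}$. The $\LB^p$ analogue of the regularity estimate \eqref{eqn:curl-curl-regularity} (which is available for this boundary-value problem on smooth domains) gives $\lnorm{\vec{B}}{\W^{2,6}} \lsim \lnorm{\vec{G}}{6}$. Using \eqref{eqn:curl-cross-product} and $\div \vec{v} = \div \vec{B} = 0$, $\curl(\vec{v}\times\vec{B}) = (\vec{B}\cdot\nabla)\vec{v} - (\vec{v}\cdot\nabla)\vec{B}$, which is bounded in $L^2(0,T^*; \LB^6(\Omega))$ by the same embedding arguments; the time-derivative term is handled as above. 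Summing the two estimates yields the claimed bound.

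The main obstacle is not the algebra but ensuring that the required pointwise-in-time $\W^{2,6}$ elliptic regularity is genuinely applicable: for the Stokes problem this is classical (e.g.~Giga--Sohr), while for the curl--curl problem with the mixed boundary conditions \eqref{eqn:boundary-conditions b}--\eqref{eqn:boundary-conditions c} one needs the $\LB^p$ version of \eqref{eqn:curl-curl-regularity}, which holds on smooth domains but must be invoked explicitly. Once this is in place, the rest is a routine combination of Sobolev embeddings with the uniform bounds established in Proposition \ref{prop:combined-estimates}.
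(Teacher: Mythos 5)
Your argument for $\vec{v}$ is essentially the paper's proof: both rewrite \eqref{eqn:fmhd-equiv a} as a stationary Stokes problem at a.e.\ time, invoke $\LB^6$ Stokes regularity (the paper uses \cite[Proposition 2.2]{temam2001-book} with $\alpha=6$), and bound the forcing in $L^2(0,T^*;\LB^6(\Omega))$ via the embeddings $\HB^2(\Omega)\hookrightarrow\LB^\infty(\Omega)$, $\HB^1(\Omega)\hookrightarrow\LB^6(\Omega)$ and the bounds of Proposition \ref{prop:combined-estimates} (in particular $\partial_t\vec{v}\in L^2(0,T^*;\HB^1(\Omega))$); whether you absorb the gradient terms into the pressure or keep them in the forcing is immaterial. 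For $\vec{B}$, however, you take a genuinely different route: you invoke an $\LB^6$ analogue of \eqref{eqn:curl-curl-regularity} for the curl--curl problem with the boundary conditions \eqref{eqn:boundary-conditions b}--\eqref{eqn:boundary-conditions c}, a result the paper never states or cites --- only the $\LB^2$-scale estimate \eqref{eqn:curl-curl-regularity} is available there. The paper avoids this by applying \eqref{eqn:curl-curl-regularity} with $s=1$, i.e.\ estimating $\hnorm{\vec{B}}{3}$ by $\hnorm{-\partial_t\vec{B}+\curl(\vec{v}\times\vec{B})}{1}$, and then using $\HB^1(\Omega)\hookrightarrow\LB^6(\Omega)$ on the second derivatives (effectively $\HB^3(\Omega)\hookrightarrow\W^{2,6}(\Omega)$); the required $\HB^1$ bound on the right-hand side is exactly what Proposition \ref{prop:combined-estimates} supplies through $\partial_t\nabla\vec{B}\in L^2(0,T^*;\LB^2(\Omega))$ and the $L^\infty(0,T^*;\HB^2(\Omega))$ bounds. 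Your version is cleaner and more symmetric (both fields treated by pointwise-in-time $\LB^p$ elliptic regularity, and the forcing only needs to be measured in $\LB^6$ rather than $\HB^1$), but it stands or falls on the $\W^{2,6}$ regularity for the mixed-boundary curl--curl system, which --- as you correctly flag --- holds on smooth domains (it is an ADN-type elliptic boundary value problem, using $\div\vec{B}=0$ to reduce to $-\Delta\vec{B}=\vec{G}$) yet would need an explicit citation or proof; the paper's route stays entirely within the regularity results it has already quoted, at the modest cost of the extra $\HB^1$-norm bookkeeping on $\curl(\vec{v}\times\vec{B})$.
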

\begin{proof}
From \eqref{eqn:fmhd-equiv a}, velocity $\vec{v}$ satisfies
\begin{equation*}
    -\Delta \vec{v} + \nabla p = -\partial_t \vec{v} -(\vec{v} \cdot \nabla)\vec{v} + \curl \vec{B} \times \vec{B} + (\vec{B} \cdot \nabla)\vec{m} + \nabla \left(\vec{m} \cdot \vec{B} - \frac{1}{2} |\nabla \vec{m}|^2\right) - (\nabla \vec{m})^{\top} \Delta \vec{m}.
\end{equation*}
Setting $\nu = 1$, $g = 0$, $\phi = 0$, $m=0$ and $\alpha = 6$ in \cite[Proposition 2.2]{temam2001-book}, we obtain the regularity result
\begin{equation*}
    \wnorm{\vec{u}}{2}{6} \lsim \lnorm{-\Delta \vec{u} + \nabla p}{6}.
\end{equation*}
Now, setting $\vec{u} = \vec{v}$, using the Sobolev embeddings $\HB^1(\Omega) \hookrightarrow \LB^6(\Omega)$ and $\HB^2(\Omega) \hookrightarrow \LB^{\infty}(\Omega)$ and the regularity in \eqref{eqn:weak-convergence} we obtain
\begin{align*}
    \lnorm{\nabla^2 \vec{v}}{6} &\lsim \lnorm{\partial_t \vec{v}}{6} + \lnorm{\vec{v}}{\infty} \lnorm{\nabla \vec{v}}{6} + \lnorm{\vec{B}}{\infty} \lnorm{\nabla \vec{B}}{6} + \lnorm{\vec{B}}{\infty} \lnorm{\nabla \vec{m}}{6} \\
    &\qquad + \lnorm{\nabla \vec{m}}{\infty} \lnorm{\nabla^2 \vec{m}}{6} + \lnorm{\nabla \vec{m}_k}{\infty} \lnorm{\Delta \vec{m}_k}{6} \\
    &\lsim 1 + \hnorm{\partial_t \vec{v}}{1}. \\
    &\lsim 1 + \lnorm{\partial_t \nabla \vec{v}}{2}.
\end{align*}
Therefore, the Sobolev embedding $\W^{1,6}(\Omega) \hookrightarrow \HB^2(\Omega)$ and \eqref{eqn:weak-convergence} implies
\begin{align*}
    \intOt{\wnorm{\vec{v}}{2}{6}^2} &\lsim \intOt{\wnorm{\vec{v}}{1}{6}^2 + \lnorm{\nabla^2 \vec{v}}{6}^2} \\
    &\lsim \intOt{1 + \hnorm{\vec{v}}{2}^2 + \lnorm{\partial_s \nabla \vec{v}}{2}^2} \\
    &\lsim 1.
\end{align*}
For magnetic field $\vec{B}$, the identity \eqref{eqn:curl-squared} and \eqref{eqn:fmhd-equiv c} imply
\begin{equation*}
    -\Delta \vec{B} = \curl^2 \vec{B} = -\partial_t \vec{B} + \curl(\vec{v} \times \vec{B}).
\end{equation*}
Using the Sobolev embedding $\HB^1(\Omega) \hookrightarrow \LB^6(\Omega)$, we note that
\begin{equation*}
    \lnorm{\nabla^2 \vec{B}}{6} \lsim \hnorm{\vec{B}}{3}.
\end{equation*}
Hence, by setting $s=1$ in the regularity result \eqref{eqn:curl-curl-regularity}, and using \eqref{eqn:curl-cross-product} as in the proof of Lemma \ref{lem:curl-vxb}, we obtain
\begin{align*}
    \lnorm{\nabla^2 \vec{B}}{6} &\lsim \hnorm{-\partial_t \vec{B} + \curl(\vec{v} \times \vec{B})}{1} \\
    &\lsim \lnorm{\partial_t \vec{B}}{2} + \lnorm{\vec{B}}{\infty} \lnorm{\nabla \vec{v}}{2} + \lnorm{\vec{v}}{\infty} \lnorm{\nabla \vec{B}}{2} + \lnorm{\partial_t \nabla \vec{B}}{2} \\
    &\qquad + \lnorm{\nabla \vec{B}}{4} \lnorm{\nabla \vec{v}}{4} + \lnorm{\vec{v}}{\infty} \lnorm{\nabla^2 \vec{B}}{2} + \lnorm{\vec{B}}{\infty} \lnorm{\nabla^2 \vec{v}}{2} \\
    &\lsim 1 + \lnorm{\partial_t \nabla \vec{B}}{2}.
\end{align*}
The result for the magnetic field $\vec{B}$ is proved in the same way as the velocity $\vec{v}$, thus completing the proof. 
\end{proof}

Lastly, we prove that our limiting solutions satisfy the given initial data in the following proposition.

\begin{proposition} \label{prop:initial-data}
The limiting solutions $\vec{v}$, $\vec{B}$ and $\vec{m}$ obtained from \eqref{eqn:weak-convergence} and \eqref{eqn:strong-convergence} satisfies
\begin{equation*}
    \vec{v}(0) = \vec{v}_0, \qquad \vec{B}(0) = \vec{B}_0, \qquad \text{and} \qquad \vec{m}(0) = \vec{m}_0,
\end{equation*}
almost everywhere.
\end{proposition}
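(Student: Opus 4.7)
The plan is to exploit the extra time-regularity of the weak limits recorded in \eqref{eqn:weak-convergence}, together with the strong convergence of the Galerkin initial data $(\vec{v}_0^k, \vec{B}_0^k, \vec{m}_0^k) \to (\vec{v}_0, \vec{B}_0, \vec{m}_0)$ in $\HB^2(\Omega) \times \HB^2(\Omega) \times \HB^3(\Omega)$, and to identify the traces at $t=0$ of $\vec{v}, \vec{B}, \vec{m}$ via an integration by parts in time against the smooth Galerkin approximants. First I would note that the inclusions $\vec{v}, \vec{B} \in L^{\infty}(0,T^*; \HB^2(\Omega)) \cap W^{1,\infty}(0,T^*; \LB^2(\Omega))$ and $\vec{m} \in L^{\infty}(0,T^*; \HB^3(\Omega)) \cap W^{1,\infty}(0,T^*; \HB^1(\Omega))$ from \eqref{eqn:weak-convergence} yield, by a standard interpolation result, $\vec{v}, \vec{B} \in C([0,T^*]; \HB^1(\Omega))$ and $\vec{m} \in C([0,T^*]; \HB^2(\Omega))$, so the pointwise evaluations $\vec{v}(0), \vec{B}(0), \vec{m}(0)$ are unambiguous.

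For the velocity, the key step is to fix an arbitrary $\vec{\varphi} \in \HB^1_{0,\div}(\Omega)$ and a scalar cutoff $\psi \in C^{\infty}([0,T^*])$ with $\psi(0) = 1$ and $\psi(T^*) = 0$. Since $\vec{z}$ is smooth in time by \eqref{eqn:galerkin-ode}, integrating by parts the map $t \mapsto \liprod{\vec{v}_k(t)}{\vec{\varphi}} \psi(t)$ gives
\begin{equation*}
    \liprod{\vec{v}_k(0)}{\vec{\varphi}} = -\int_0^{T^*} \liprod{\partial_t \vec{v}_k(t)}{\vec{\varphi}} \psi(t) \, dt - \int_0^{T^*} \liprod{\vec{v}_k(t)}{\vec{\varphi}} \psi'(t) \, dt.
\end{equation*}
The left-hand side converges to $\liprod{\vec{v}_0}{\vec{\varphi}}$ by the strong convergence of the approximate initial data, while each term on the right converges to its natural counterpart by the weak$^{\star}$ convergence of $\partial_t \vec{v}_k$ and the weak convergence of $\vec{v}_k$ recorded in \eqref{eqn:weak-convergence}. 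Applying the same integration by parts to the limit $\vec{v}$ (justified because $\vec{v} \in C([0,T^*]; \LB^2(\Omega))$) rewrites the limiting right-hand side as $\liprod{\vec{v}(0)}{\vec{\varphi}}$, so $\liprod{\vec{v}(0) - \vec{v}_0}{\vec{\varphi}} = 0$ for every $\vec{\varphi} \in \HB^1_{0,\div}(\Omega)$. Since the Galerkin iterates take values in the closed subspace $\HB^1_{0,\div}(\Omega)$, this property is inherited by $\vec{v}(t)$ for a.e.~$t$ and then, by continuity, for every $t$; in particular both $\vec{v}(0)$ and $\vec{v}_0$ lie in the closure of $\HB^1_{0,\div}(\Omega)$ in $\LB^2(\Omega)$, and a density argument forces $\vec{v}(0) = \vec{v}_0$ a.e.

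The arguments for $\vec{B}$ and $\vec{m}$ are structurally identical, but with test functions in $\HB^1_{n,\div}(\Omega)$ and $\HB^1(\Omega)$ respectively, and invoking the corresponding weak$^{\star}$ convergences of $\partial_t \vec{B}_k$ in $L^{\infty}(0,T^*; \LB^2(\Omega))$ and $\partial_t \vec{m}_k$ in $L^{\infty}(0,T^*; \HB^1(\Omega))$ together with the strong convergence $\vec{B}_0^k \to \vec{B}_0$ in $\HB^2(\Omega)$ and $\vec{m}_0^k \to \vec{m}_0$ in $\HB^3(\Omega)$. The main obstacle is not any single algebraic manipulation but rather ensuring that the point-evaluations $\vec{v}(0), \vec{B}(0), \vec{m}(0)$ are genuinely attained by the weak limits; this is entirely resolved by the time-continuity established in the first paragraph, which is itself a consequence of the $W^{1,\infty}$-in-time bounds furnished by Proposition \ref{prop:combined-estimates}.
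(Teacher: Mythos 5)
Your argument is correct, but it reaches the conclusion by a genuinely different mechanism than the paper. The paper first passes to the limit in the time-integrated Galerkin equations (using Lemma \ref{lem:convergence-weak-formulation}) to conclude that $\vec{v}$ satisfies \eqref{eqn:weak-formulation-velocity} with datum $\vec{v}_0$, and then, using that the limit solves the PDE a.e.\ together with $\vec{v}\in C([0,T^*];\HB^1(\Omega))$, derives the same identity \eqref{eqn:alternative-weak-formulation-velocity} with $\liprod{\vec{v}(0)}{\boldsymbol{\varphi}}$ on the right; subtracting the two makes all the nonlinear terms cancel, but only after they have been passed to the limit. You instead test against a smooth time cutoff $\psi$ with $\psi(0)=1$, $\psi(T^*)=0$ and integrate by parts in time, which requires only the weak/weak$^{\star}$ convergences of $\vec{v}_k$ and $\partial_t\vec{v}_k$ and the strong convergence of the discrete initial data, and never touches the nonlinear terms; for this particular proposition your route is therefore more economical, since it does not invoke Lemma \ref{lem:convergence-weak-formulation} at all. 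You are also more careful than the paper on one point: with velocity test functions restricted to $\HB^1_{0,\div}(\Omega)$, the identity $\liprod{\vec{v}(0)-\vec{v}_0}{\boldsymbol{\varphi}}=0$ only determines $\vec{v}(0)-\vec{v}_0$ within the $\LB^2$-closure of $\HB^1_{0,\div}(\Omega)$, and your observation that both $\vec{v}(0)$ and $\vec{v}_0$ lie in that closure (the paper asserts, somewhat loosely, the identity for all $\boldsymbol{\varphi}\in\HB^1(\Omega)$) is exactly what closes this step; for $\vec{B}$ and $\vec{m}$ the test spaces are dense in $\LB^2(\Omega)$, so no such care is needed. One minor wording slip: $\HB^1_{0,\div}(\Omega)$ is closed in $\HB^1(\Omega)$, not in $\LB^2(\Omega)$; what your argument actually uses, correctly, is that $\vec{v}(t)$ lies in the Leray space (the $\LB^2$-closure) for a.e.\ $t$ and hence, by $\LB^2$-continuity in time, at $t=0$ as well.
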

\begin{proof}
Integrating \eqref{eqn:galerkin-velocity} with respect to time over $[0,t]$, and using \eqref{eqn:weak-convergence}, Lemma \ref{lem:convergence-weak-formulation} and the fact that $\vec{v}_0^k \to \vec{v}_0$ in $\HB^2(\Omega)$ from \eqref{eqn:galerkin-initial-data}, we conclude that $\vec{v}$ satisfies \ref{eqn:weak-formulation-velocity}. However, differentiating \eqref{eqn:weak-formulation-velocity} with respect to time and integrating, once again, over $[0,t]$ we see that
\begin{align} \label{eqn:alternative-weak-formulation-velocity}
    \begin{split}
        &\liprod{\vec{v}(t)}{\boldsymbol{\varphi}}  + \intOt{\liprod{\nabla \vec{v}(s)}{\nabla \boldsymbol{\varphi}}}   +\intOt{\liprod{(\vec{v}(s) \cdot \nabla) \vec{v}(s)}{\boldsymbol{\varphi}}} - \intOt{\liprod{\curl \vec{B}(s) \times \vec{B}(s)}{\boldsymbol{\varphi}}}\\
        &\qquad - \intOt{\liprod{(\vec{B}(s) \cdot \nabla) \vec{m}(s)}{\boldsymbol{\varphi}}} + \intOt{\liprod{(\nabla \vec{m})^{\top} \Delta \vec{m}}{\vec{\varphi}}} = \liprod{\vec{v}(0)}{\boldsymbol{\varphi}},   
    \end{split}
\end{align}
where $\vec{v}(0)$ is well-defined since $\vec{v} \in C([0,T]; \HB^1(\Omega))$. Subtracting \eqref{eqn:weak-formulation-velocity} from \eqref{eqn:alternative-weak-formulation-velocity} we obtain
\begin{equation*}
    \liprod{\vec{v}(0) - \vec{v}_0}{\vec{\varphi}} = 0,
\end{equation*}
for all $\vec{\varphi} \in \HB^1(\Omega)$, and so, $\vec{v}(0) = \vec{v}_0$ almost everywhere. The proof is completed by repeating this process for $\vec{B}$ and $\vec{m}$. 
\end{proof}

Finally, we note that $\div \vec{v} = 0$ almost everywhere on $(0,T) \times \Omega$ since
\begin{align*}
    \intOt{\lnorm{\div \vec{v}}{2}^2} &\lsim \intOt{\lnorm{\div \vec{v}_k - \div \vec{v}}{2}^2} \\
    &\lsim \intOt{\hnorm{\vec{v}_k - \vec{v}}{1}^2} \to 0.
\end{align*}
Similarly, $\div \vec{B} = 0$. Having established the existence of bounded Galerkin solutions in Section \ref{sec:faedo-galerkin} we have passed them to the limit in \eqref{eqn:weak-convergence} and \eqref{eqn:strong-convergence}. The subsequent analysis of Lemma \ref{lem:convergence-weak-formulation} and Propositions \ref{prop:unit-magnitude}--\ref{prop:initial-data}, and recovery of gradient/pressure terms as well as the divergence free requirements of $\vec{v}$ and $\vec{B}$, we have uncovered strong solutions to \ref{eqn:fmhd-equiv} in the sense of Definition \ref{def:weak-solution}. This completes the proof of Theorem \ref{thm:existence}.

\section{Proof of Theorem \ref{thm:stability}} \label{sec:stability}

Let $(\vec{v}_1, \vec{B}_1, \vec{m}_1, p_1)$ and $(\vec{v}_2, \vec{B}_2, \vec{m}_2, p_2)$ be two solutions to \eqref{eqn:fmhd-equiv} from Theorem \ref{thm:existence} with initial data $(\vec{v}_{1,0}, \vec{B}_{1,0}, \vec{m}_{1,0})$ and $(\vec{v}_{2,0}, \vec{B}_{2,0}, \vec{m}_{2,0})$, respectively and additionally define
\begin{align*}
    \bar{p} &= p_1 - p_2.
\end{align*}
As $(\vec{v}_1, \vec{B}_1, \vec{m}_1, p_1)$ and $(\vec{v}_2, \vec{B}_2, \vec{m}_2, p_2)$ are strong solutions to \eqref{eqn:fmhd-equiv}, $(\bar{\vec{v}}, \bar{\vec{B}}, \bar{\vec{m}}, \bar{p})$ satisfies
\begin{align}
\begin{split} \label{eqn:diff-eqn-1}
    \partial_t \bar{\vec{v}} - \Delta \bar{\vec{v}} &= -(\vec{v}_1 \cdot \nabla) \bar{\vec{v}} - (\bar{\vec{v}} \cdot \nabla) \vec{v}_2 + \nabla \bar{p} + \curl \vec{B}_1 \times \bar{\vec{B}} + \curl \bar{\vec{B}} \times \vec{B}_2 \\
    &\qquad + (\vec{B}_1 \cdot \nabla) \bar{\vec{m}} + (\bar{\vec{B}} \cdot \nabla) \vec{m}_2 + \nabla (\vec{m}_1 \cdot \vec{B}_1 - \vec{m}_2 \cdot \vec{B}_2) \\
    &\qquad + (\nabla \vec{m}_1)^{\top} \Delta \bar{\vec{m}} + (\nabla \bar{\vec{m}})^{\top} \Delta \vec{m}_2,
\end{split} \\[2ex]
\begin{split} \label{eqn:diff-eqn-2}
    \partial_t \bar{\vec{B}} + \curl^2 \bar{\vec{B}} &= \curl(\vec{v}_1 \times \bar{\vec{B}}) + \curl(\bar{\vec{v}} \times \vec{B}_2),
\end{split} \\[2ex]
\begin{split} \label{eqn:diff-eqn-3}
    \partial_t \bar{\vec{m}} - \Delta \bar{\vec{m}} &= (\vec{v}_1 \cdot \nabla) \bar{\vec{m}} + (\bar{\vec{v}} \cdot \nabla) \vec{m}_2 + \vec{m}_1 \times \Delta \bar{\vec{m}} + \bar{\vec{m}} \times \Delta \vec{m}_2 \\
    &\qquad + |\nabla \vec{m}_1|^2 \bar{\vec{m}} + (|\nabla \vec{m}_1|^2 - |\nabla \vec{m}_2|^2) \vec{m}_2 \\
    &\qquad + \vec{m}_1 \times \bar{\vec{B}} + \bar{\vec{m}} \times \vec{B}_2 + \vec{m}_1 \times (\vec{m}_1 \times \bar{\vec{B}}) + \vec{m}_1 \times (\bar{\vec{m}} \times \vec{B}_2) + \bar{\vec{m}} \times (\vec{m}_2 \times \vec{B}_2),
\end{split}
\end{align}
almost everywhere on $[0,T^*] \times \Omega$. Note that we regularly make use of the regularity given by Theorem \ref{thm:existence}. Now, taking the $\LB^2(\Omega)$ inner product of \eqref{eqn:diff-eqn-1} with $\bar{\vec{v}}$, using the Sobolev embeddings $\HB^1(\Omega) \hookrightarrow \LB^6(\Omega) \hookrightarrow \LB^3(\Omega)$ and $\HB^2(\Omega) \hookrightarrow \LB^{\infty}(\Omega)$, noting that $\div \vec{v}_1 = 0$ and using Young's inequality for some $\varepsilon > 0$, we obtain
\begin{align*}
    &\frac{1}{2} \ddt \lnorm{\bar{\vec{v}}}{2}^2 + \lnorm{\nabla \bar{\vec{v}}}{2}^2 \\
    &\qquad \lsim \lnorm{\nabla \vec{v}_2}{\infty} \lnorm{\bar{\vec{v}}}{2}^2 + \lnorm{\curl \vec{B}_1}{\infty} \lnorm{\bar{\vec{B}}}{2} \lnorm{\bar{\vec{v}}}{2} \\
    &\qquad \qquad + \lnorm{\vec{B}_2}{\infty} \lnorm{\curl \bar{\vec{B}}}{2} \lnorm{\bar{\vec{v}}}{2} + \lnorm{\vec{B}_1}{\infty} \lnorm{\bar{\vec{m}}}{2} \lnorm{\bar{\vec{v}}}{2} \\
    &\qquad \qquad + \lnorm{\nabla \vec{m}_2}{\infty} \lnorm{\bar{\vec{B}}}{2} \lnorm{\bar{\vec{v}}}{2} + \lnorm{\nabla \vec{m}_1}{\infty} \lnorm{\Delta \bar{\vec{m}}}{2} \lnorm{\bar{\vec{v}}}{2} \\
    &\qquad \qquad + \lnorm{\bar{\vec{v}}}{6} \lnorm{\Delta \vec{m}_2}{3} \lnorm{\nabla \bar{\vec{m}}}{2} \\
    &\qquad \lsim \left(1 + \lnorm{\nabla \vec{v}_2}{\infty}\right) \lnorm{\bar{\vec{v}}}{2}^2 + \left(1 + \lnorm{\nabla \vec{B}_1}{\infty}^2\right) \lnorm{\bar{\vec{B}}}{2}^2 \\
    &\qquad \qquad + \lnorm{\bar{\vec{m}}}{2}^2 + \lnorm{\nabla \bar{\vec{m}}}{2}^2 + \varepsilon \lnorm{\nabla \bar{\vec{v}}}{2}^2 + \varepsilon \lnorm{\nabla \bar{\vec{B}}}{2}^2 + \varepsilon \lnorm{\Delta \bar{\vec{m}}}{2}^2.
\end{align*}
Similarly, taking the $\LB^2(\Omega)$ inner product of \eqref{eqn:diff-eqn-2} with $\bar{\vec{B}}$ and using Lemma \ref{lem:curl-vxb}, we obtain 
\begin{align*}
    \frac{1}{2} \ddt \lnorm{\bar{\vec{B}}}{2}^2 + \lnorm{\curl \bar{\vec{B}}}{2}^2 &\lsim \lnorm{|\bar{\vec{B}}| \, |\nabla \vec{v}_1| \, |\bar{\vec{B}}|}{1} + \lnorm{|\vec{v}_1| \, |\nabla \bar{\vec{B}}| \, |\bar{\vec{B}}|}{1} \\
    &\qquad + \lnorm{|\vec{B}_2| \, |\nabla \bar{\vec{v}}| \, |\bar{\vec{B}}|}{1} + \lnorm{|\bar{\vec{v}}| \, |\nabla \vec{B}_2| \, |\bar{\vec{B}}|}{1} \\
    &\lsim \lnorm{\nabla \vec{v}_1}{\infty} \lnorm{\bar{\vec{B}}}{2}^2 + \lnorm{\vec{v}_1}{\infty} \lnorm{\nabla \bar{\vec{B}}}{2} \lnorm{\bar{\vec{B}}}{2} \\
    &\qquad + \lnorm{\vec{B}_2}{\infty} \lnorm{\nabla \bar{\vec{v}}}{2} \lnorm{\bar{\vec{B}}}{2} \\
    &\qquad + \lnorm{\nabla \vec{B}_2}{\infty} \lnorm{\bar{\vec{v}}}{2} \lnorm{\bar{\vec{B}}}{2}, \\
    &\lsim \lnorm{\bar{\vec{v}}}{2}^2 + \left(1 + \lnorm{\nabla \vec{v}_1}{\infty} + \lnorm{\nabla \vec{B}_2}{\infty}^2\right) \lnorm{\bar{\vec{B}}}{2}^2 \\
    &\qquad + \varepsilon \lnorm{\nabla \bar{\vec{v}}}{2}^2 + \varepsilon \lnorm{\nabla \bar{\vec{B}}}{2}^2.
\end{align*}
The magnetisation equation \eqref{eqn:diff-eqn-3} requires two steps. But before that, we note that
\begin{align*}
    \left||\nabla \vec{m}_1|^2 - |\nabla \vec{m}_2|^2\right| \leq \left(|\nabla \vec{m}_1| + |\nabla \vec{m}_2|\right) |\nabla \bar{\vec{m}}|.
\end{align*}
Now we are in a position to estimate the difference. First, in a similar manner, we take the $\LB^2(\Omega)$ inner product of \eqref{eqn:diff-eqn-3} with $\bar{\vec{m}}$, note that $\div \vec{v}_1 = 0$ and apply integration by parts to the third right-hand side term to obtain
\begin{align*}
    \frac{1}{2} \ddt \lnorm{\bar{\vec{m}}}{2}^2 + \lnorm{\nabla \bar{\vec{m}}}{2}^2 &\lsim \lnorm{\nabla \vec{m}_2}{\infty} \lnorm{\bar{\vec{v}}}{2} \lnorm{\bar{\vec{m}}}{2} + \lnorm{\nabla \vec{m}_1}{\infty} \lnorm{\nabla \bar{\vec{m}}}{2} \lnorm{\bar{\vec{m}}}{2}  \\
    &\qquad + \lnorm{\nabla \vec{m}_1}{\infty}^2 \lnorm{\bar{\vec{m}}}{2}^2 \\
    &\qquad + \left(\lnorm{\nabla \vec{m}_1}{\infty} + \lnorm{\nabla \vec{m}_2}{\infty}\right) \lnorm{\vec{m}_2}{\infty} \lnorm{\nabla \bar{\vec{m}}}{2} \lnorm{\bar{\vec{m}}}{2} \\
    &\qquad + \lnorm{\vec{m}_1}{\infty} \lnorm{\bar{\vec{B}}}{2} \lnorm{\bar{\vec{m}}}{2} + \lnorm{\vec{m}_1}{\infty}^2 \lnorm{\bar{\vec{B}}}{2} \lnorm{\bar{\vec{m}}}{2} \\
    &\qquad + \lnorm{\vec{m}_1}{\infty} \lnorm{\vec{B}_2}{\infty} \lnorm{\bar{\vec{m}}}{2}^2 \\[1ex]
    &\lsim \lnorm{\bar{\vec{v}}}{2}^2 + \lnorm{\bar{\vec{B}}}{2}^2  + \lnorm{\bar{\vec{m}}}{2}^2 + \lnorm{\nabla \bar{\vec{m}}}{2}^2.
\end{align*}
Then we take the $\LB^2(\Omega)$ inner product of \eqref{eqn:diff-eqn-3} with $-\Delta \bar{\vec{m}}$ and integrate by parts so that we have
\begin{align*}
    &\frac{1}{2} \ddt \lnorm{\nabla \bar{\vec{m}}}{2}^2 + \lnorm{\Delta \bar{\vec{m}}}{2}^2 \\
    &\qquad \lsim \lnorm{\vec{v}_1}{\infty} \lnorm{\nabla \bar{\vec{m}}}{2} \lnorm{\Delta \bar{\vec{m}}}{2} + \lnorm{\nabla \vec{m}_2}{\infty} \lnorm{\bar{\vec{v}}}{2} \lnorm{\Delta \bar{\vec{m}}}{2} \\
    &\qquad \qquad + \lnorm{\nabla \vec{m}_1}{\infty}^2 \lnorm{\bar{\vec{m}}}{2} \lnorm{\Delta \bar{\vec{m}}}{2} \\
    &\qquad \qquad + \left(\lnorm{\nabla \vec{m}_1}{\infty} + \lnorm{\nabla \vec{m}_2}{\infty}\right) \lnorm{\vec{m}_2}{\infty} \lnorm{\nabla \bar{\vec{m}}}{2} \lnorm{\Delta \bar{\vec{m}}}{2} \\
    &\qquad \qquad + \lnorm{\vec{m}_1}{\infty} \lnorm{\bar{\vec{B}}}{2} \lnorm{\Delta \bar{\vec{m}}}{2} + \lnorm{\vec{B}_2}{\infty} \lnorm{\bar{\vec{m}}}{2} \lnorm{\Delta \bar{\vec{m}}}{2} \\
    &\qquad \qquad + \lnorm{\vec{m}_1}{\infty}^2 \lnorm{\bar{\vec{B}}}{2} \lnorm{\Delta \bar{\vec{m}}}{2} + \lnorm{\vec{m}_1}{\infty} \lnorm{\vec{B}_2}{\infty} \lnorm{\bar{\vec{m}}}{2} \lnorm{\Delta \bar{\vec{m}}}{2} \\
    &\qquad \qquad + \lnorm{\vec{m}_2}{\infty} \lnorm{\vec{B}_2}{\infty} \lnorm{\bar{\vec{m}}}{2} \lnorm{\Delta \bar{\vec{m}}}{2} \\
    &\qquad \lsim \lnorm{\bar{\vec{v}}}{2}^2 + \lnorm{\bar{\vec{B}}}{2}^2 + \lnorm{\bar{\vec{m}}}{2}^2 + \lnorm{\nabla \bar{\vec{m}}}{2}^2 + \varepsilon \lnorm{\Delta \bar{\vec{m}}}{2}^2.
\end{align*}
Using Lemma \ref{lem:curl-estimate} and assuming that $\varepsilon > 0$ is sufficiently small, the sum of these estimates satisfy
\begin{align*}
    &\frac{1}{2} \ddt \left(\lnorm{\bar{\vec{v}}}{2}^2 + \lnorm{\bar{\vec{B}}}{2}^2 + \lnorm{\bar{\vec{m}}}{2}^2 + \lnorm{\nabla \bar{\vec{m}}}{2}^2\right) \\
    &\qquad + \left(\lnorm{\nabla \bar{\vec{v}}}{2}^2 + \lnorm{\nabla \bar{\vec{B}}}{2}^2 + \lnorm{\nabla \bar{\vec{m}}}{2}^2 + \lnorm{\Delta \bar{\vec{m}}}{2}^2\right) \\
    &\qquad \qquad \lsim \left(1 + \lnorm{\nabla \vec{v}_2}{\infty}\right) \lnorm{\bar{\vec{v}}}{2}^2 + \left(1 + \lnorm{\nabla \vec{v}_1}{\infty} + \lnorm{\nabla \vec{B}_2}{\infty}^2\right) \lnorm{\bar{\vec{B}}}{2}^2 \\
    &\qquad \qquad \qquad +\lnorm{\bar{\vec{m}}}{2}^2 + \lnorm{\nabla \bar{\vec{m}}}{2}^2.
\end{align*}
By Young's inequality, we note that
\begin{equation*}
    1 + \lnorm{\nabla \vec{v}_1}{\infty} \lsim 1 + \lnorm{\nabla \vec{v}_1}{\infty}^2,
\end{equation*}
and similarly for $\vec{v}_2$. Therefore, using the Sobolev embedding $\W^{1,6}(\Omega) \hookrightarrow \LB^{\infty}(\Omega)$ and the Gronwall Lemma we obtain
\begin{align*}
    &\left(\lnorm{\bar{\vec{v}}}{2}^2 + \lnorm{\bar{\vec{B}}}{2}^2 + \hnorm{\bar{\vec{m}}}{1}^2\right) \\
    &\qquad \lsim \left(\lnorm{\bar{\vec{v}}_0}{2}^2 + \lnorm{\bar{\vec{B}}_0}{2}^2 + \hnorm{\bar{\vec{m}}_0}{1}^2\right) \\
    &\qquad \qquad \exp \left(\int_0^{T^*} 1 + \wnorm{\vec{v}_1}{2}{6}^2 + \wnorm{\vec{v}_2}{2}{6}^2  + \wnorm{\vec{B}_2}{2}{6}^2 \; {\rm d}s\right).
\end{align*}
The required result is proved by applying Proposition \ref{prop:W26-estimate}. This completes the proof of Theorem \ref{thm:stability}.

\section*{Acknowledgements}
The first author is supported by the Australian Government's Research Training Program Scholarship awarded at the University of New South Wales, Sydney. The second author is partially supported by the Australian Research Council under grant number DP190101197 and DP200101866.

\appendix
\section{Useful Results} \label{app:useful-results}

\begin{theorem}[Aubin's Lemma \cite{aubin1963-article, simon1986-article}] \label{aubins-lemma}
Let $X$, $Y$ and $B$ be Banach spaces such that $X \subset B \subset Y$, where the injection $X \subset B$ is compact and the injection $B \subset Y$ is continuous. Assume that $\{u_k\}_{k=1}^{\infty}$ is a bounded sequence in $L^p(0,T; X)$ such that $\{\partial_t u_k\}_{k=1}^{\infty}$ is bounded in $L^r(0,T; Y)$ where $1 \leq p < \infty$ and $r = 1$, or $p = \infty$ and $r > 1$. Then there exists a subsequence $\{u_{k_j}\}_{j=1}^{\infty}$ which strongly converges in $L^p(0,T; B)$. 
\end{theorem}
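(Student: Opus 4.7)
The plan is to prove Aubin's Lemma by combining an \emph{Ehrling-type interpolation inequality} (which exploits the compactness of $X \hookrightarrow B$ together with the continuity of $B \hookrightarrow Y$) with the \emph{Fr\'echet--Kolmogorov--Riesz} characterisation of precompact subsets of $L^p(0,T;B)$. The crucial observation is that the time derivative bound in $L^r(0,T;Y)$ provides equicontinuity of time translations in the weak space $Y$, while the bound in $L^p(0,T;X)$ provides tightness in the strong space $X$; Ehrling's inequality then lets us upgrade $Y$-control to $B$-control at small cost.

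First I would establish the \textbf{Ehrling inequality}: for every $\varepsilon>0$ there exists $C_\varepsilon>0$ such that
\begin{equation*}
\|u\|_B \leq \varepsilon \|u\|_X + C_\varepsilon \|u\|_Y \qquad \forall\, u \in X.
\end{equation*}
This is proved by contradiction: if it fails, one produces $(u_n)\subset X$ with $\|u_n\|_B=1$, $\|u_n\|_X$ bounded, and $\|u_n\|_Y\to 0$. Compactness of $X\hookrightarrow B$ yields a subsequence converging in $B$, while $B\hookrightarrow Y$ forces the limit to be $0$, contradicting $\|u_n\|_B=1$.

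Next I would use the hypothesis on $\partial_t u_k$ to obtain the time-translation estimate in $Y$:
\begin{equation*}
\|u_k(t+h)-u_k(t)\|_Y
\;=\;\bigg\|\int_t^{t+h}\partial_s u_k(s)\,{\rm d}s\bigg\|_Y
\;\leq\; h^{1-1/r}\,\|\partial_t u_k\|_{L^r(0,T;Y)},
\end{equation*}
with the convention $1-1/r=0$ when $r=1$ (in which case one argues instead via absolute continuity, and the integrand in $h$ is handled by uniform integrability). Applying Ehrling to $u_k(t+h)-u_k(t)$ and integrating in $t$ yields, for each $\varepsilon>0$,
\begin{equation*}
\int_0^{T-h}\|u_k(t+h)-u_k(t)\|_B^p\,{\rm d}t
\;\lesssim\;\varepsilon^p\|u_k\|_{L^p(0,T;X)}^p
+ C_\varepsilon^p \int_0^{T-h}\|u_k(t+h)-u_k(t)\|_Y^p\,{\rm d}t.
\end{equation*}
The first term is uniformly small by choice of $\varepsilon$, and the $Y$-term tends to zero as $h\to 0^+$ uniformly in $k$ thanks to the translation bound above. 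For pointwise-in-$t$ tightness in $B$, I would note that for a.e.\ $t$ the sequence $\{u_k(t)\}$ is bounded in $X$, hence precompact in $B$; integrability in $t$ is controlled by Ehrling applied slicewise.

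Finally, the Fr\'echet--Kolmogorov--Riesz theorem in $L^p(0,T;B)$ (which requires $B$ separable; one reduces to this case by replacing $B$ by the closed span of the countable family $\{u_k(t_j)\}$ with $t_j$ dense in $[0,T]$) yields a subsequence converging strongly in $L^p(0,T;B)$. The case $p=\infty$, $r>1$ is handled by the same strategy but phrased in $C([0,T];B)$: the translation estimate improves to H\"older continuity in $Y$ with exponent $1-1/r>0$, Ehrling upgrades this to equicontinuity in $B$, and Ascoli--Arzel\`a finishes the argument.

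\textbf{The main obstacle} I expect is the bookkeeping needed to make the translation estimate work uniformly in $k$ when $r=1$, since then $h^{1-1/r}=1$ does not supply smallness; one must instead exploit the equi-integrability of $\{\|\partial_t u_k(\cdot)\|_Y\}$ in $L^1(0,T)$ to drive the $Y$-term to zero as $h\to 0^+$. Once Ehrling is in place and this subtlety is resolved, the remainder is a direct application of standard compactness criteria.
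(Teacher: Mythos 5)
The paper does not prove this statement: it is quoted in the appendix as a classical result with citations to Aubin and Simon, so there is no internal proof to compare against. Your outline follows the standard route of Simon's paper (Ehrling interpolation, translation estimates in $Y$ upgraded to $B$, a compactness criterion in $L^p(0,T;B)$, and Ascoli for $p=\infty$), and the Ehrling lemma and the $p=\infty$, $r>1$ case are handled correctly. However, two steps have genuine gaps. First, the ``pointwise-in-$t$ tightness'' claim is false as stated: boundedness of $\{u_k\}$ in $L^p(0,T;X)$ with $p<\infty$ does \emph{not} imply that $\{u_k(t)\}_k$ is bounded in $X$ for a.e.\ $t$ (a moving-bump construction with heights growing slower than the widths shrink gives a sequence bounded in $L^p(0,T;X)$ whose pointwise $X$-norms blow up at every $t$). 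The compactness criterion you need (Simon's Theorem~1, the correct vector-valued replacement for Fr\'echet--Kolmogorov--Riesz) instead requires relative compactness in $B$ of the time averages $\int_{t_1}^{t_2} u_k(t)\,{\rm d}t$, which \emph{does} follow from the $L^p(0,T;X)$ bound via H\"older and the compact embedding $X\hookrightarrow B$; your argument should be rerouted through that condition.

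Second, your treatment of the case $r=1$ leans on ``uniform integrability'' or ``equi-integrability'' of $\{\|\partial_t u_k(\cdot)\|_Y\}$ in $L^1(0,T)$, which is neither a hypothesis nor a consequence of mere boundedness in $L^1(0,T;Y)$, so as written this step would fail. It is also unnecessary: by Fubini,
\begin{equation*}
\int_0^{T-h}\|u_k(t+h)-u_k(t)\|_Y\,{\rm d}t \;\leq\; h\,\|\partial_t u_k\|_{L^1(0,T;Y)},
\end{equation*}
while trivially $\sup_t\|u_k(t+h)-u_k(t)\|_Y\leq \|\partial_t u_k\|_{L^1(0,T;Y)}\leq C$, so
\begin{equation*}
\int_0^{T-h}\|u_k(t+h)-u_k(t)\|_Y^p\,{\rm d}t \;\leq\; C^{p-1}\,C\,h \;\longrightarrow\; 0
\end{equation*}
uniformly in $k$ as $h\to 0^+$. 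With this replacement, your Ehrling step then yields the required uniform smallness of translations in $L^p(0,T-h;B)$, and together with the corrected average-compactness condition the argument closes along the lines of Simon's proof.
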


\begin{theorem}[Generalised Gronwall Lemma \cite{beckenbach1961-book}] \label{generalised-gronwall-lemma}
Assume $u : [a,b] \to [0,\infty)$ and $\beta : [a,b] \to [0,\infty)$ along with non-decreasing $g : [0,\infty) \to [0,\infty)$ satisfy
\begin{equation*}
    u(t) \leq \alpha + \int_a^t \beta(s) g(u(s)) \; {\rm d}s \quad \forall t \in [a,b],
\end{equation*}
where $\alpha$ is a positive constant and $[a,b] \subset [0,\infty)$. Then
\begin{equation*}
    u(t) \leq G^{-1}\left(\int_a^t \beta(s) \; {\rm d}s\right), \quad t \in \RR,
\end{equation*}
where $G^{-1}$ is the inverse function of
\begin{equation*}
    G(\sigma) := \int_{\alpha}^{\sigma} \frac{1}{g(s)} \; {\rm d}s, \quad \sigma \geq 0,
\end{equation*}
and
\begin{equation*}
    \RR = \left\{t \in [a,b] : \int_a^t \beta(s) \; {\rm d}s \in G([0,\infty))\right\}.
\end{equation*}
\end{theorem}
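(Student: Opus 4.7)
The plan is to reduce the claim to a direct comparison argument by introducing the right-hand side as an auxiliary function and using the monotonicity of $g$. Specifically, I would define
\begin{equation*}
    U(t) := \alpha + \int_a^t \beta(s)\, g(u(s))\, {\rm d}s, \qquad t \in [a,b],
\end{equation*}
so that by hypothesis $u(t) \leq U(t)$ for every $t \in [a,b]$, and moreover $U(a) = \alpha$ and $U$ is absolutely continuous with $U'(t) = \beta(t)\, g(u(t))$ for almost every $t$. Because $g$ is non-decreasing and $u(t) \leq U(t)$, this gives the key differential inequality
\begin{equation*}
    U'(t) = \beta(t)\, g(u(t)) \leq \beta(t)\, g(U(t)) \quad \text{a.e. } t \in [a,b].
\end{equation*}

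Next, I would divide by $g(U(t))$ (treating the case $g(U(t)) = 0$ separately by noting that then $U$ is locally constant in $t$ and the desired inequality is trivial on that interval), which yields
\begin{equation*}
    \frac{U'(t)}{g(U(t))} \leq \beta(t).
\end{equation*}
The left-hand side is, by the chain rule, exactly $\frac{{\rm d}}{{\rm d}t} G(U(t))$, where $G(\sigma) = \int_{\alpha}^{\sigma} 1/g(s)\, {\rm d}s$. Integrating from $a$ to $t$ and using $U(a) = \alpha$ together with $G(\alpha) = 0$ produces
\begin{equation*}
    G(U(t)) \leq \int_a^t \beta(s)\, {\rm d}s.
\end{equation*}

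Finally, $G$ is non-decreasing because $1/g \geq 0$, so it possesses a well-defined non-decreasing inverse $G^{-1}$ on its range $G([0,\infty))$. For $t \in \RR$, the right-hand integral lies in this range, so I may apply $G^{-1}$ to both sides to obtain
\begin{equation*}
    U(t) \leq G^{-1}\!\left(\int_a^t \beta(s)\, {\rm d}s\right),
\end{equation*}
and combining with $u(t) \leq U(t)$ gives the conclusion. The only subtle point, and the one I would be most careful about, is the invertibility and monotonicity of $G$: one must verify that $G$ is strictly increasing on the set where $g > 0$ (so the inverse exists), and that the set $\RR$ precisely describes the $t$ for which $\int_a^t \beta\, {\rm d}s$ lies in the image of $G$, so that $G^{-1}$ is meaningfully applied. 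Everything else reduces to the monotonicity trick and a single application of the fundamental theorem of calculus.
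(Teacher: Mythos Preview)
The paper does not supply its own proof of this theorem; it is merely stated in the appendix with a citation to \cite{beckenbach1961-book} and used as a black-box tool in Proposition~\ref{prop:combined-estimates}. Your argument is correct and is precisely the classical Bihari--LaSalle proof: introduce the majorant $U$, exploit the monotonicity of $g$ to obtain a separable differential inequality for $U$, and integrate via the antiderivative $G$. There is nothing in the paper to compare against, and your handling of the edge case $g(U(t))=0$ and of the role of $\RR$ is appropriate.
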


\begin{theorem}[Gagliardo-Nirenberg inequalities]\label{thm:gagliardo-nirenberg}
Let $\Omega$ be a bounded domain of $\R^d$ with Lipschitz boundary, and
let~$\vec{u}: \Omega \to \R^n$. Then
\begin{align}\label{gagliardo}
	\|\vec{u}\|_{\W^{r,q}(\Omega)} 
	\leq 
	C \|\vec{u}\|_{\HB^{s_1}(\Omega)}^\theta \|\vec{u}\|_{\HB^{s_2}(\Omega)}^{1-\theta}
\end{align}
for all $\vec{u} \in \HB^{s_2}(\Omega)$, where $s_1,s_2,r$ are non-negative
real numbers satisfying
\begin{align*}
	0\leq s_1<s_2,\quad \theta \in (0,1),\quad 0\leq r <\theta s_1 +(1-\theta)s_2, 
\end{align*}
and $q\in(2,\infty]$ satisfies
\[
	\frac{1}{q}=\frac{1}{2}+\frac{(s_2-s_1)\theta}{d}- \frac{s_2-r}{d}.
\]
Moreover, when $2< q<\infty$, we have
\begin{align*}
	\theta =\frac{2q(s_2-r)-d(q-2)}{2q(s_2-s_1)}.
\end{align*}
\end{theorem}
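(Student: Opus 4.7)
The approach is a two-step argument: an interpolation inequality in the Bessel-potential scale, composed with the critical Sobolev embedding, transferred back from $\R^d$ to $\Omega$. First I invoke a universal extension operator $E\colon \HB^{s_2}(\Omega)\to \HB^{s_2}(\R^d)$ that is simultaneously bounded $\HB^{s_1}(\Omega)\to \HB^{s_1}(\R^d)$; for a Lipschitz domain this is supplied by Stein's extension theorem, uniform across the whole Sobolev scale. Setting $\tilde{\vec{u}}:=E\vec{u}$, it is enough to prove \eqref{gagliardo} with $\Omega$ replaced by $\R^d$, since restriction back to $\Omega$ only costs a constant.

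Step 1 (interpolation). Set $s:=\theta s_1+(1-\theta)s_2\in(s_1,s_2)$. Using the Fourier characterisation $\|\tilde{\vec{u}}\|_{\HB^s(\R^d)}^2=\int_{\R^d}(1+|\xi|^2)^s|\widehat{\tilde{\vec{u}}}(\xi)|^2\,d\xi$, factorise the weight as $(1+|\xi|^2)^s=(1+|\xi|^2)^{\theta s_1}(1+|\xi|^2)^{(1-\theta)s_2}$ and apply H\"older's inequality with conjugate exponents $1/\theta$ and $1/(1-\theta)$ to obtain
\begin{equation*}
\|\tilde{\vec{u}}\|_{\HB^s(\R^d)}\leq \|\tilde{\vec{u}}\|_{\HB^{s_1}(\R^d)}^{\theta}\,\|\tilde{\vec{u}}\|_{\HB^{s_2}(\R^d)}^{1-\theta}.
\end{equation*}

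Step 2 (Sobolev embedding). Substituting $s=\theta s_1+(1-\theta)s_2$ into the hypothesis $\frac{1}{q}=\frac{1}{2}+\frac{(s_2-s_1)\theta}{d}-\frac{s_2-r}{d}$ rearranges to $\frac{1}{q}=\frac{1}{2}-\frac{s-r}{d}$, which is precisely the scaling of the critical Sobolev embedding $\HB^s(\R^d)\hookrightarrow \W^{r,q}(\R^d)$ for $q\in(2,\infty)$; the strict inequality $r<s$ guarantees a strictly positive smoothness gain, so the embedding is applicable. Chaining Step 1 and Step 2, then composing with the bounds from $E$ and the restriction $\|\vec{u}\|_{\W^{r,q}(\Omega)}\leq\|\tilde{\vec{u}}\|_{\W^{r,q}(\R^d)}$, delivers \eqref{gagliardo}. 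Solving the scaling relation for $\theta$ at $q<\infty$ reproduces the explicit formula $\theta=(2q(s_2-r)-d(q-2))/(2q(s_2-s_1))$.

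The main obstacle is the endpoint $q=\infty$, where the scaling forces $s-r=d/2$ and the clean chain fails because $\HB^{d/2}(\R^d)\not\hookrightarrow \LB^{\infty}(\R^d)$. I would handle it by a Littlewood--Paley decomposition $\tilde{\vec{u}}=\sum_{j}\tilde{\vec{u}}_j$ with $\widehat{\tilde{\vec{u}}}_j$ supported on $|\xi|\sim 2^{j}$, using Bernstein's inequality $\|D^{r}\tilde{\vec{u}}_j\|_{\LB^{\infty}}\lsim 2^{j(r+d/2)}\|\tilde{\vec{u}}_j\|_{\LB^{2}}$ together with the dyadic bounds $\|\tilde{\vec{u}}_j\|_{\LB^{2}}\lsim 2^{-js_i}\|\tilde{\vec{u}}\|_{\HB^{s_i}(\R^d)}$ for $i=1,2$, and splitting the sum at the frequency that balances the two estimates. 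A secondary point of care is the meaning of $\HB^{s}$ for non-integer $s$ on a Lipschitz domain: the Bessel-potential formulation on $\R^d$ transfers to $\Omega$ precisely through the universal extension invoked at the outset, which is why Stein's theorem (rather than the more elementary derivative-by-derivative extension) is needed.
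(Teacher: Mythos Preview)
Your argument is correct and follows the standard route for proving interpolation inequalities of Gagliardo--Nirenberg type: extend to $\R^d$ via Stein's universal extension, interpolate in the Bessel-potential scale on the Fourier side, and close with the critical Sobolev embedding; the Littlewood--Paley splitting at $q=\infty$ is the usual remedy for the failure of the endpoint embedding, and your frequency-balancing sketch reproduces the exponent $\theta$ correctly.

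There is nothing to compare against here: the paper does not prove this theorem. It is listed in the appendix alongside Aubin's Lemma and the Generalised Gronwall Lemma as an external result quoted for convenience, with no argument supplied. So your proposal is not a different route from the paper's proof---it is simply \emph{a} proof where the paper offers none. One small point worth tightening if you were to write this up in full: the embedding $\HB^{s}(\R^d)\hookrightarrow \W^{r,q}(\R^d)$ at the critical scaling is immediate for integer $r$ (apply Hardy--Littlewood--Sobolev derivative-by-derivative), but for fractional $r$ you should be explicit about which definition of $\W^{r,q}$ is in play and pass through the Bessel-potential scale $H^{r}_q$ or the Triebel--Lizorkin identification. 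In the paper's applications only integer orders are actually used, so this is not a practical obstruction.
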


\bibliographystyle{abbrv}
\bibliography{Bibliography}

\end{document}